\theoremstyle{plain}
\newtheorem{theorem}{Theorem}
\newtheorem{lemma}{Lemma}
\newtheorem{proposition}{Proposition}
\newtheorem{definition}{Definition}
\newtheorem{remark}{Remark}
\numberwithin{equation}{section}
\numberwithin{figure}{section}
\newcommand{\D}{\partial}
\newcommand{\R}{\mathbb{R}}
\newcommand{\N}{\mathbb{N}}
\title[Higher regularity of the free boundary in the parabolic Signorini problem]{Higher regularity of the free boundary in the parabolic Signorini problem}
\author[Agnid Banerjee]{Agnid Banerjee}
\address[AB]{University of California, Irvine, Department of Mathematics}
\email[Agnid Banerjee]{agnidb@uci.edu}
\thanks{}
\author[Mariana Smit Vega Garcia]{Mariana Smit Vega Garcia}
\address[MSVG]{University of Washington, Department of Mathematics, Padelford Hall, Seattle, WA 98195, USA, phone number +1 206-543-7146} 
\email[Mariana Smit Vega Garcia]{marianag@uw.edu}
\thanks{Second author supported in part by DFG, Projekt ``Singularit{\"a}ten ElektroHydroDynamischer Gleichungen".} 
\author[Andrew K. Zeller]{Andrew K. Zeller}
\address[AKZ]{Purdue University, Department of Mathematics}
\email[Andrew K. Zeller]{zellera@purdue.edu}
\thanks{}
\keywords{Parabolic Signorini problem, boundary Harnack inequality, regularity of the free boundary}
\subjclass[2010]{35R35}
\begin{document}
\begin{abstract}
We show that the quotient of two caloric functions which vanish on a portion of an $H^{k+ \alpha}$ regular slit is $H^{k+ \alpha}$ at  the slit,  for $k \geq 2$. In the case $k=1$, we show that the quotient is  in $H^{1+\alpha}$ if the slit is assumed to be space-time $C^{1, \alpha}$  regular. This can be thought of as a parabolic analogue of a recent important result in \cite{DSS14a}, whose ideas inspired us. As an application, we show that the free boundary near a regular point of the parabolic  thin obstacle problem studied in \cite{DGPT} with zero obstacle is $C^{\infty}$ regular in space and time.
\end{abstract}
\maketitle

\tableofcontents

\section{Introduction}\label{S:intro}
The classical comparison theorem states that two nonnegative harmonic functions which vanish on the boundary of a Lipschitz domain, or more generally an NTA domain, must vanish at the same rate. An important consequence of this result is that the quotient of two such functions is in fact H\"older continuous up to the boundary (with the added restriction that the function in the denominator needs to now be nonnegative). In their recent remarkable work, De Silva and Savin have established a higher-order version of this result. More specifically, they have proven in \cite{DSS} the following:

\begin{theorem}\label{t}
Let $D$ be a $C^{k, \alpha}$ domain in $\R^n$, with $0 \in \partial D$. Let $u, v$ be   two harmonic functions vanishing on $\partial D \cap B(0,1)$.  Furthermore, let  $u>0$ in $D$ and $u=1$ at some interior point in $D$. Then,
\begin{equation}\label{e:1}
\left\|\frac{v}{u}\right\|_{C^{k, \alpha}(B(0, 1/2))}  \leq C ||v||_{L^{\infty}(B(0,1))}.
\end{equation}
\end{theorem}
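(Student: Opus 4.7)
The plan is to prove the theorem by induction on $k$, combined with a boundary-flattening reduction and a De Giorgi-style improvement-of-flatness argument, so that at every boundary point the ratio $v/u$ is matched to a polynomial with a sharp decay rate.

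\emph{Reduction and induction.} I would first apply a $C^{k,\alpha}$ diffeomorphism $\Phi$ mapping $D \cap B_{3/4}$ onto $B_{3/4} \cap \{x_n > 0\}$. The pushed-forward pair $\tilde u, \tilde v$ solves a divergence-form elliptic equation $\div(A \nabla \cdot) = 0$ with $A \in C^{k-1,\alpha}$, vanishes on $B_{3/4} \cap \{x_n = 0\}$, and $\tilde u > 0$ in the upper half ball with $\tilde u(x) \ge c\, x_n$ by Hopf. Because $\Phi \in C^{k,\alpha}$, it suffices to prove \eqref{e:1} for $\tilde v/\tilde u$ at the flat boundary. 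The base case of the induction (H\"older regularity of the quotient) is the classical divergence-form boundary Harnack inequality. For the inductive step, I would assume that for all such equations of regularity $C^{j-1,\alpha}$ the quotient is $C^{j,\alpha}$ and then upgrade this to $C^{j+1,\alpha}$.

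\emph{Polynomial approximation and improvement of flatness.} The heart of the argument is to show that at every $x_0 \in B_{1/2} \cap \{x_n = 0\}$ there is a polynomial $Q_{x_0}$ of degree $\le j+1$ with H\"older-continuous coefficients (as functions of $x_0$) such that
\begin{equation*}
|\tilde v(x) - \tilde u(x)\, Q_{x_0}(x - x_0)| \le C\, |x - x_0|^{j+2+\alpha}, \qquad x \in B_r(x_0) \cap \{x_n > 0\}.
\end{equation*}
Combined with $\tilde u \ge c\, x_n$ and interior Schauder, this Campanato-type bound yields $\tilde v/\tilde u \in C^{j+1,\alpha}$. The approximation is produced by dyadic iteration of a single improvement step: there exist $\rho \in (0,1/2)$ and $\varepsilon_0 > 0$ such that $|\tilde v/\tilde u - P| \le \varepsilon$ in $B_1^+$ with $\varepsilon \le \varepsilon_0$ and $P$ an admissible polynomial of degree $j+1$ forces the existence of $P'$ close to $P$ with $|\tilde v/\tilde u - P'| \le \rho^{j+1+\alpha} \varepsilon$ in $B_\rho^+$. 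I would establish the improvement by compactness: assume it fails along a sequence $(\tilde u_m, \tilde v_m, P_m, \varepsilon_m)$, set $w_m := (\tilde v_m/\tilde u_m - P_m)/\varepsilon_m$, invoke the inductive $C^{j,\alpha}$ bound together with the degenerate elliptic equation $\div(\tilde u_m^2 A_m \nabla w_m) = -\varepsilon_m^{-1}\div(\tilde u_m^2 A_m \nabla P_m)$ (whose right-hand side stays bounded because of the admissibility of $P_m$) to pass to a H\"older limit $w_\infty$ solving the constant-coefficient model equation $\div(x_n^2 \nabla w_\infty) = 0$ in the half-space, and use the explicit polynomial expansion of such solutions to produce a better $P'_\infty$ — contradicting the failure for large $m$.

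\emph{Main obstacle.} The delicate point is the compactness step: obtaining uniform boundary-H\"older estimates for the rescaled remainders $w_m$ despite the degeneration $\tilde u_m^2 \sim x_n^2$ along $\{x_n = 0\}$. This is exactly where the inductive hypothesis must enter in a substantive way, feeding the previous order's quotient regularity into a Schauder-type estimate for the degenerate equation to extract convergence up to the flat boundary. Making the correct definition of \emph{admissible polynomial} — one for which the rescaled inhomogeneity stays bounded as $\varepsilon_m \to 0$ — is also subtle and has to be arranged so that at each scale one is free to absorb the lower-order part of $P_m$ into the previously chosen polynomial. Once the improvement step is secured, the rest of the argument is a standard dyadic summation and a conversion of the resulting Campanato decay into a pointwise $C^{k,\alpha}$ bound on $v/u$.
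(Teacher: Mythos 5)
Your high-level architecture — approximating polynomials, dyadic improvement of flatness established by compactness against a flat model, and Campanato-type decay converted to a pointwise $C^{k,\alpha}$ bound — is indeed the skeleton of the De Silva--Savin argument that the present paper cites and adapts. But two of your choices diverge from theirs in a way that creates a real gap, and it is exactly the step you flag as the ``main obstacle.'' First, you flatten the domain to a half-space, trading a $C^{k,\alpha}$ boundary for a divergence-form operator with coefficients $A \in C^{k-1,\alpha}$. De Silva and Savin do not flatten: they keep the original domain, phrase the approximating polynomials in $(x',d)$ coordinates with $d$ the signed distance to $\partial D$ (in the slit/parabolic case of this paper, $(x', t, r)$), and compute $\Delta(uP)$ (here $H(UP)$) directly in those coordinates. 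Second — and more consequentially — you run the improvement step on the \emph{quotient} $w = v/u - P$ via the degenerate divergence equation $\div(u^2 A\nabla w)=\cdot$ with weight $u^2 \sim x_n^2$. That weight fails Muckenhoupt's $A_2$ condition ($\int x_n^{-2}$ diverges at $\{x_n=0\}$), so the uniform boundary-H\"older bound you need for the rescaled remainders $w_m$ is not available from off-the-shelf degenerate Schauder theory, and your proposal does not supply a mechanism to produce it; appealing to ``the inductive $C^{j,\alpha}$ bound'' does not obviously close this, since what is needed is compactness of $w_m$ up to the boundary at scale one after blow-up, not a Campanato bound at the original scale.

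The paper, following De Silva--Savin, is built precisely to circumvent this issue: it never passes to the quotient equation. In the improvement-of-flatness lemma (Lemma \ref{L:approx} / \ref{L:improflat}) one works with the \emph{difference} $u - UP$, which is caloric up to a controllable source, and decomposes its rescaling $\tilde u$ as $\tilde u_0 + \tilde v$, where $\tilde u_0$ is caloric and vanishing on the rescaled slit — handled by compactness against the flat model (Theorem \ref{T:straight}) — and $\tilde v$ is killed by explicit barriers of the form $-U_0 + U_0^2$. All boundary estimates in this route are for non-degenerate caloric/harmonic functions, so compactness is immediate; the only ``hard'' algebra is the bookkeeping that keeps the corrected polynomial $\overline P$ in the admissible (approximating) class, handled by solving a linear system for the coefficients. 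The harmonic-difference decomposition together with barriers is the key lemma your plan is missing, and is what makes the De Silva--Savin proof self-contained at the level of classical potential theory rather than hinging on regularity theory for a strongly degenerate operator.
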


This result establishes regularity of the quotient one order higher than one might expect. Indeed, the classical Schauder estimates imply that $u, v$ are $C^{k, \alpha}$ up to the boundary. Then, by the Hopf Lemma, we have $u_{\nu} > 0$, from which one can assert that the quotient $\frac{v}{u}$ is $C^{k-1,\alpha}$ up to the boundary. However, Theorem \ref{t} remarkably states that the ratio is in fact $C^{k, \alpha}$ up to the boundary. The special case $k=0$ of this result is the \emph{boundary Harnack principle} mentioned above, see  \cite{CFMS} and \cite{JK}. Very recently, such a result has been generalized to the parabolic case in \cite{BG}.

\medskip

Besides being an interesting regularity result in its own right, a direct application of Theorem \ref{t} above implies $C^\infty$ smoothness of a priori $C^{1,\alpha}$ free boundaries for the classical obstacle problem with zero obstacle without the use of the hodograph transformation as in \cite{KN}, \cite{KNS}, a tool which has thus far been the standard way of establishing smoothness of free boundaries starting from $C^{1, \alpha}$. Having said this, we would like to mention that the hodograph transformation in \cite{KN}, \cite{KNS} does in fact imply real-analyticity of the free boundary, which is instead not implied by Theorem \ref{t}. Nevertheless, Theorem \ref{t} provides a new perspective in the study of Schauder theory and free boundary problems.

\medskip

Theorem \ref{t} was  subsequently generalized by De Silva and Savin to slit domains in \cite{DSS14a}. In order to state their result, we first introduce the following notations, which should not be confused with the ones we will use beginning in Section \ref{S:notations}.  We write the points of $\R^{n+1}$ as $X=(x,x_{n+1})$, where points of $\R^n$ are denoted with $x=(x',x_n)$, for $x' \in \R^{n-1}$.
We denote the $n-$dimensional slit in $\R^{n+1}$ by
 \[
\mathcal{P} = \{X \in \R^n\times \R \ | \  x_{n+1} =0, \ x_{n} \leq g(x') \},
\]
where $g$ is assumed to be in $C^{k+1+\alpha}$  for $k \geq 0$. In particular, we will assume  $g(0)=0$, $\nabla_{x'}g(0)=0$, and $\|g\|_{C^{k+1+\alpha}} \leq 1$. We also define
\[
\Gamma = \{X \in \R^n\times\R \ | \ x_{n+1} =0, \ x_{n}=g(x')\}.
\]
Given $X=(x, x_{n+1})$, let $d$ denote the signed distance in $\R^{n}$ from $x$ to $\Gamma$. Furthermore, let $r= \sqrt{x_{n+1}^2 + d^2}$.

\medskip

\begin{theorem}\label{T}
Let $k \geq 0$. Let $U>0$ be a solution of $\Delta U=0$ and $u$ be a solution of $\Delta u = \frac{U_0}{r}f$ in $B_1\setminus \mathcal{P}$ where $U_0 = \frac{1}{\sqrt{2}}\sqrt{d+r}$, $f \in C^{k+\alpha}_{x'r}(\Gamma \cap B_1)$ and $\|f\|_{C^{k+\alpha}_{x'r}(\Gamma \cap B_1)} \leq 1$. Assume that $U, u \in C(B_1)$, that both are even in $x_{n+1}$ and vanish continuously on $\mathcal{P}$. Furthermore, assume that $\Gamma \in C^{k+1+\alpha}$, $\|u\|_{L^\infty(B_1)} \leq 1$ and $U(\frac{1}{2}e_n)=1$. Then 
\[
\Big\|\frac{u}{U}\Big\|_{C^{k+1+\alpha}_{x'r}(\Gamma \cap B_{1/2})} \leq C,
\]
where $C=C(n,k,\alpha)$.
\end{theorem}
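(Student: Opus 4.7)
The plan is to reduce the theorem to a Dirichlet boundary regularity problem on a smooth domain, on which one can hope to deploy the technology of Theorem \ref{t} together with a De Silva--Savin style polynomial approximation iteration. First I would flatten the edge $\Gamma$ via a $C^{k+1+\alpha}$ diffeomorphism of $\R^{n+1}$ that is even in $x_{n+1}$ and sends $\Gamma$ to $\{x_n = x_{n+1} = 0\}$, so that $\mathcal{P}$ becomes the flat slit $\{x_{n+1} = 0, x_n \leq 0\}$. After this pullback, $U$ and $u$ satisfy variable-coefficient divergence-form elliptic equations with $C^{k+\alpha}$ coefficients, and $U_0 = \tfrac{1}{\sqrt{2}}\sqrt{d+r}$ is sent to a function with the same leading order. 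Next I would apply the conformal square-root substitution in the transverse plane, $x_n + i x_{n+1} = (y_1 + i y_2)^2$ with $y_1 > 0$, which unfolds the slit plane to the half-plane $\{y_1 > 0\}$, sends the slit to the smooth boundary $\{y_1 = 0\}$, and pulls $U_0$ back to the linear function $y_1$ (since $d + r = 2y_1^2$ in the flat model).

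In the unfolded coordinates $(x', y_1, y_2)$ the transverse Laplacian becomes $\frac{1}{4(y_1^2 + y_2^2)} \Delta_{y_1, y_2}$, and the singular right-hand side transforms nicely: since $\frac{U_0}{r} = \frac{y_1}{y_1^2 + y_2^2}$ modulo smooth factors, multiplying the equation by $4(y_1^2 + y_2^2)$ produces a degenerate elliptic equation of Baouendi--Grushin type, uniformly elliptic away from the edge $\{y_1 = y_2 = 0\}$, with right-hand side of the form $4 y_1 \tilde f$ where $\tilde f$ inherits $C^{k+\alpha}$ regularity. With this regular-boundary set-up I would run a polynomial approximation iteration in the spirit of Theorem \ref{t}: at each scale $\rho$ about a fixed point of $\Gamma$, construct a polynomial $P_\rho$ of degree $k+1$ in the tangential variables $(x', y_1)$ such that $u - P_\rho U$ satisfies a rescaled smallness estimate, and show that this estimate improves by a fixed factor at the next dyadic scale. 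The resulting Cauchy family of polynomials yields the desired $C^{k+1+\alpha}_{x'r}$ expansion of $u/U$ on $\Gamma$, after noting that $r = y_1^2 + y_2^2$, so polynomials in $(x', y_1^2)$ correspond to polynomials in $(x', r)$ along $\Gamma$.

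The main obstacle in this program is the degeneracy of the transformed operator at the edge $\{y_1 = y_2 = 0\}$, where the coefficients blow up and standard Schauder theory does not apply directly; one must exploit the precise Baouendi--Grushin structure together with the vanishing of both $U$ and $u$ on $\{y_1 = 0\}$ to close the iteration uniformly up to the edge. A second subtle point is that $U/U_0$ itself is only $C^{k+\alpha}$ (not $C^{k+1+\alpha}$) in $(x', r)$, so the crucial extra gain of one derivative for $u/U$ must come from cancellation between the leading behavior of $u$ and that of $U$, exactly as in the smooth-domain case of Theorem \ref{t}. Proving the base case $k = 0$, which amounts to a boundary Harnack principle on slit domains with the correct right-hand side, and then running the induction on $k$ via polynomial improvement at each scale, together constitute the technical heart of the argument.
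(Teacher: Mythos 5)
Your plan diverges substantially from the argument in \cite{DSS14a} that this paper's Theorem \ref{T:main} parallels, and the two places you flag as ``obstacles'' are in fact genuine gaps that the De Silva--Savin scheme is specifically designed to avoid. The decisive difference is that you apply both the $\Gamma$-flattening diffeomorphism and the conformal square-root substitution \emph{globally}, producing a single degenerate Baouendi--Grushin equation on the half-space $\{y_1>0\}$, and then hope for a Schauder theory for that operator up to the codimension-two edge $\{y_1=y_2=0\}$. But no such Schauder theory is invoked in the paper (or in \cite{DSS14a}), and establishing it would be a separate, nontrivial project: the ``spirit of Theorem \ref{t}'' does not transplant, because Theorem \ref{t} relies on uniform ellipticity and smooth boundary, exactly what is lost under your change of variables. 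The paper never flattens $\Gamma$. Instead it keeps the curved slit and treats the curvature as a perturbation of size $\delta$; it computes $\Delta(x^{\mu}r^{m}t^{s}U)$ (or $H(\cdot)$ in the parabolic case) directly in the $(x',t,r)$ variables, obtaining a triangular linear system with perturbative coefficients $c^{\mu m}_{\sigma l}=O(\delta)$, and runs an improvement-of-flatness iteration against the \emph{flat} model. The square-root substitution is used only there (Theorem \ref{T:straight}), only in the two-dimensional transverse plane, and only after relegating the $x''$-derivatives to the right-hand side --- in that restricted setting the transformed operator is the nondegenerate $2$D Laplacian, not a Grushin operator, and one gets regularity by odd reflection and classical Schauder.

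Two further points you omit are load-bearing. First, to pass from the Schauder estimate for $u/U_0$ to the boundary Harnack statement for $u/U$, the paper needs nondegeneracy of $U/U_0$ at $\Gamma$ (Lemma \ref{L:P0}): $U\geq Cr$ near the slit, hence the leading polynomial $P_0$ of $U/U_0$ has nonzero constant term. Without this, writing $u/U = (u/U_0)/(U/U_0)$ would not produce a function with the claimed regularity. Second, the improvement-of-flatness step requires barriers (multiples of $-U_0+U_0^2$) to dominate the inhomogeneous piece coming from the $\frac{U_0}{r}$ right-hand side; this is how the method handles the singularity without invoking any degenerate Schauder theory at the edge. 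Your proposal would need to replace both of these ingredients with something working in the Grushin picture, and neither substitution is sketched. In short: the observations that the substitution sends $U_0\mapsto y_1$ and that the extra derivative of regularity for $u/U$ must come from a cancellation are correct, but the route you describe --- ``flatten, unfold, do Schauder for the degenerate operator'' --- is closer in flavor to the hodograph approach of \cite{KPS} than to the present paper, and as stated it does not close because the Schauder theory for the transformed operator is precisely the thing that would have to be built.
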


Similar to the case of the classical obstacle problem, a direct application of Theorem \ref{T} implies (see Theorem 1.2 in \cite{DSS14a}) smoothness of the free boundary near regular points for the thin obstacle, or Signorini, problem with zero obstacle studied in \cite{ACS} (see also \cite{AC}, \cite{CSS}). Note that real analyticity of the free boundary near regular  points  in the thin obstacle problem was recently  established in \cite{KPS} by using a method  based on the hodograph transformation.

\medskip

These recent results and their  applications to free boundary problems motivated us to investigate their parabolic counterpart. While difficulties arise in using the methods of \cite{KPS} in the parabolic setting, the methods of De Silva and Savin carry over much more naturally from the elliptic case. Our main result, Theorem \ref{T:main}, constitutes the parabolic analogue of Theorem \ref{T} above.  In the present  paper we make the observation that to generalize the ideas  in \cite{DSS14a} to the parabolic situation, one needs to make delicate adaptations due to different scalings of the space and time variables.  Similar to the elliptic case, an application of Theorem \ref{T:main} implies  smoothness of the free boundary  near  regular points for the parabolic Signorini problem with zero obstacle studied in \cite{DGPT}. However, we would like to mention that unlike in the elliptic case, in order to apply our result to the parabolic Signorini problem, one needs to know that the time derivative of the solution vanishes on the free boundary near such regular  points. To put things in perspective, it was  established  in \cite{DGPT} that the solution $u$ is $\frac{3}{4}$-H{\"o}lder continuous in the time variable via monotonicity methods. With this result alone, it would not be immediately possible to apply our Theorem \ref{T:main} to get smoothness of the free boundary. However, it was very recently established in \cite{PZ} that the time derivative  is in fact continuous near regular  points, thereby allowing the application of our result (see also the recent preprint \cite{ACM} where the same result was independently established). It remains to be seen whether one can establish real analyticity of the free boundary in the space  variable near such regular points, similar to the results obtained for the  classical parabolic obstacle problem  in \cite{KPS}.

\medskip

The paper is organized as follows. In Section \ref{S:notations} we introduce various notations and state our main results. Section \ref{S:boundaryHar} contains the proofs of these results. Finally, in Section \ref{S:HR}, we establish the higher regularity of the free boundary near regular points for the parabolic Signorini problem with zero obstacle as an application of our results.

\medskip

\textbf{Acknowledgment:} We would like to thank Prof. Nicola Garofalo and Prof. Arshak Petrosyan for  suggesting the problem and for their many helpful comments and suggestions.

\section{Notation and preliminaries}\label{S:notations}

Hereafter, when we say that a constant is universal, we mean that it depends exclusively on $n, k$ and $\alpha$.

Throughout the paper we use following notation. We write the points of $\R^n$ as $x=(x',x_n)$, where $x'=(x'',x_{n-1})\in \R^{n-1}$ and $x''\in \R^{n-2}$. Points of $\R^n\times\R$ are written as $X=(x',x_n,t)$.

For parabolic functional spaces, we use notations similar to those in \cite{Li}
and \cite{DGPT}. In particular, $H^{l,l/2}(E)$, for $l = m + \gamma, m \in \N\cup\{0\}$, $\gamma \in (0,1]$ is the space of functions such that the partial derivatives $\partial_{x}^{\alpha}\partial_t^j u$ are $\gamma$-H{\"o}lder in $x$ and $\gamma/2$-H{\"o}lder in $t$ for the derivatives of parabolic order $|\alpha|+ 2j ≤\le m$ and $(1 + \gamma)/2$-H{\"o}lder in $t$ if $|\alpha|+ 2j \le m - 1$. $L_p(E)$ stands for the Lebesgue space, and $W^{2m,m}_p(E)$ is the Sobolev space of functions
such that $\partial_{x}^{\alpha}\partial_t^j u \in L_p(E)$ for $|\alpha|+ 2j \le 2m$. We also denote by $W^{1,0}_p(E)$ the Banach subspaces of $L_p(E)$ generated by the norm

\[
||u||_{W^{1,0}_p(E)} = ||u||_{L_p(E)} +||\nabla u||_{L_p(E)}.
\]

We also define H{\"o}lder spaces as follows. Given $\beta\in (0,2]$ and $f$ defined on $\Omega\subset \R^{n+1}$, we let, for $(x_0,t_0)\in\Omega$,
\[
\langle f\rangle_{\beta;(x_0,t_0)}:=\sup\left\{\frac{|f(x_0,t)-f(x_0,t_0)|}{|t-t_0|^{\beta/2}} \ : \ (x_0,t)\in \Omega\setminus \{(x_0,t_0)\}\right\},
\]
and $\langle f\rangle_{\beta;\Omega}:=\sup_{(x_0,t_0)\in\Omega}\langle f\rangle_{\beta;(x_0,t_0)}$. For any $a>0$, we write $a=k+\alpha$, where $k$ is a nonnegative integer and $\alpha\in (0,1]$, and we define
\[
\langle f\rangle_{a;\Omega}:=\sum_{|\beta|+2j=k-1}\langle D_x^{\beta}D_t^j f\rangle_{\alpha+1},
\]
\[
[f]_{a;\Omega}:=\sum_{|\beta|+2j=k}[D_x^{\beta}D_t^j f]_{\alpha},
\]
\[
|f|_{\alpha;\Omega}:=\sum_{|\beta|+2j\le k}|D_x^{\beta}D_t^jf|+[f]_a+\langle f\rangle_a,
\]
and we let $H^a(\Omega):=\{f \ : \ |f|_a<\infty\}$.
We consider domains in $\R^n \times \R$ from which we remove an n-dimensional ``slit'' 
\[
\mathcal{P} = \{(x,t) \in \R^n\times \R \ | \  x_n =0, \ x_{n-1} \leq g(x'',t) \},
\]
where $g$ is assumed to be in $H^{k+1+\alpha}$. In particular, we will assume  $g(0)=0$, $\nabla_{x',t}g(0)=0$, and $\|g\|_{H^{k+1+\alpha}} \leq 1$ for $k\geq1$.  When $k=0$, we additionally  assume that $\|g\|_{C^{1+\alpha}} \leq 1$. We further define
\[
\Gamma = \{(x,t) \in \R^n\times\R \ | \ x_n =0, \ x_{n-1}=g(x'',t)\},
\]
and 
\[
\Psi_r = \{(x',x_n,t) : -r^2 < t \leq 0,  |x_n|<2r, |x'|<r \} \subseteq \R^n \times \R.
\]
We also let 
\[
 \Psi_{r}^{+}= \Psi_{r} \cap \{ x_n > 0\},
\]
with corkscrew point $\underline{A_r} = (0,2r,-(1+\mu)r^2)$. 

Given $X=(x',x_n,t) \in \R^n \times \R$, we denote by $d$ the signed Euclidean distance in $\R^{n-1}$ from $x'$ to $\Gamma_t:=\{y': (y', t) \in \Gamma \}$, with $d>0$ in the $e_{n-1}$ direction. We define 
\[
r = \sqrt{x_n^2 + d^2}
\]
to be the Euclidean distance in $\R^n \times \R$ from $(x',x_n,t)$ to $\Gamma_t$.

\begin{remark}\label{tbounds}
Note that from  the regularity  assumptions on $g$, it follows that $|d_t| \leq C ||g||_{C^{1+ \alpha}}$.
\end{remark}

\begin{definition}
The order of a monomial $x'^a r^b t^c$ is defined as $|a| + b+ 2c$ where for a multi-index $a=(a_1, ....a_{n-1})$,  we define  $|a|= \Sigma_{i=1}^{n-1} a_i$. The degree of a polynomial  $P$ in $(x', t, r)$ is defined to be the order of the highest order non-zero monomial in the polynomial expression for $P$.
\end{definition}

\begin{definition}\label{d:Hkalpha}
Let $k\ge 0$. We say that a function $f$ is \emph{pointwise $H^{k+\alpha}$ in the $(x',t,r)$ variables at $0 \in \Gamma$} if there exists a (tangent)  polynomial $P_0(x',t,r)$ of parabolic degree $k$ such that 
\[
f(X)=P_0(x',t,r) + O(|X|^{k+\alpha}).
\]
We denote this by $f \in H^{k+\alpha}_{x'tr}(0)$, and define $\|f\|_{H^{k+\alpha}_{x'tr}(0)}$ as the smallest $M$ for which both $\|P_0\| \leq M$ and $|f(X) - P_0(x',t,r)| \leq M|X|^{k+\alpha}$. 
Similarly, we define $f \in H^{k+\alpha}_{x'tr}(Y)$ for any $Y \in \Gamma$. 

Given $K\subset \Gamma$, we say that $f \in H^{k+\alpha}_{x'tr}(K)$ if $M := \sup_{Y \in K}{\|f\|_{H^{k+\alpha}_{x'tr}(Y)}} < \infty$ and denote this by $\|f\|_{H^{k+\alpha}_{x'tr}(K)} = M$.
\end{definition}
\begin{remark}
This notion of $H^{k+\alpha}$  coincides with the standard notion.
\end{remark}

Lastly, let $H$ be the heat operator $Hu = \Delta u - u_t$ in $\R^n\times\R.$ With the assumptions on $\mathcal{P}$ and $g$ as  above, we can now state our main result:

\begin{theorem}\label{T:main}
Let $k \geq 0$. Let $U>0$ be a solution of $HU=0$ and $u$ be a solution of $Hu = \frac{U_0}{r}f$ in $\Psi_1 \setminus \mathcal{P}$ where $U_0 = \frac{1}{\sqrt{2}}\sqrt{d+r}$, $f \in H^{k+\alpha}_{x'tr}(\Gamma \cap \Psi_1)$ and $\|f\|_{H^{k+\alpha}_{x'tr}(\Gamma \cap \Psi_1)} \leq 1$. Assume that $U, u \in C(\Psi_1)$, that both are even in $x_n$ and vanish continuously on $\mathcal{P}$. Furthermore, assume that  $\|u\|_{L^\infty(\Psi_1)} \leq 1$ and $U(\underline{A_{3/4}})=1$. Then 
\[
\Big\|\frac{u}{U}\Big\|_{H^{k+1+\alpha}_{x'tr}(\Gamma \cap \Psi_{1/2})} \leq C
\]
where $C=C(n,k,\alpha,U(\underline{A_{3/4}}))$.
\end{theorem}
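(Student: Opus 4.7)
The proof proceeds by induction on $k$, adapting the De Silva-Savin framework in \cite{DSS14a} to the parabolic setting with adjustments for the mismatched scaling of space and time. The base case $k=0$ requires a boundary Harnack type principle for caloric functions in the slit domain $\Psi_1 \setminus \mathcal{P}$, yielding $H^{\alpha}$ regularity of $u/U$ in the variables $(x',t,r)$. This would be established by combining the parabolic boundary Harnack result of \cite{BG} with the observation that $U_0 = \frac{1}{\sqrt 2}\sqrt{d+r}$ is a model solution (harmonic, hence caloric, in the flat time-independent slit case) that captures the correct $\sqrt r$ vanishing rate of both $U$ and $u$ on $\mathcal{P}$. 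The source term $\frac{U_0}{r}f$ is handled as a perturbation since it too decays like $U_0/\sqrt r$.

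For the inductive step, assume the conclusion holds for $k-1$ and fix $k \geq 1$. The central object is a finite-dimensional family of approximating functions of the form $U_0 \cdot P(x',t,r)$, where $P$ ranges over polynomials of parabolic degree at most $k+1$ (in the sense of the definition preceding Definition \ref{d:Hkalpha}). When the slit is flat and stationary, $H(U_0 \cdot P) = \frac{U_0}{r}(\mathcal{L} P)$ for an explicit second order degenerate parabolic operator $\mathcal{L}$ in the variables $(x',t,r)$; one then verifies that $\mathcal{L}$ preserves the polynomial family and is surjective onto the polynomials of degree $k-1$. Combined with the Hopf-type expansion $U = U_0(c + O(r^{\alpha}))$ obtained from the base case, this allows one to construct, at every point $Y \in \Gamma \cap \Psi_{1/2}$, a tangent polynomial $P_Y$ in $(x',t,r)$ of parabolic degree $k+1$ that approximates $u/U$ to the desired order.

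The quantitative pointwise expansion at each $Y$ is produced by a dyadic iteration at scales $\rho=2^{-j}$: at each scale we straighten $\Gamma$ using the $H^{k+1+\alpha}$ regularity of $g$, subtract the best approximation in the polynomial family, rescale parabolically so that $x'$ scales by $\rho$ and $t$ by $\rho^2$, and apply the inductive hypothesis to the rescaled remainder which satisfies a perturbed equation with source controlled by $\rho^{k+\alpha}$ times the original $f$-bound. A Campanato-type assembly of these pointwise expansions, together with compatibility of $P_Y$ as $Y$ varies along $\Gamma$, yields the global $H^{k+1+\alpha}_{x'tr}$ estimate.

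The principal difficulty, and the reason this is not a routine translation of \cite{DSS14a}, is the interaction of the parabolic time derivative with the $\sqrt{d+r}$ structure when $\Gamma$ is time-dependent. By Remark \ref{tbounds}, $d_t$ is bounded but nonzero, so $U_0$ is no longer caloric even for smooth $g$: computing $HU_0$ produces extra terms of order $r^{-1/2}$ that have to be reabsorbed into source terms of the form $\frac{U_0}{r}\widetilde f$ with $\widetilde f$ of one lower degree in the $H^{k+\alpha}$ hierarchy. Tracking this through the induction, and verifying that the polynomial family $U_0 \cdot P(x',t,r)$ is closed under the perturbed operator $\mathcal{L}$ and rich enough to cancel these contributions at each order, is the most delicate point. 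A related bookkeeping issue is that in the parabolic order $|a|+b+2c$ on monomials $x'^a r^b t^c$, a single $\partial_t$ lowers the parabolic degree by $2$ while $\partial_{x'}$ and $\partial_r$ lower it by $1$; consequently the matching between $k+1+\alpha$-regularity of $u/U$ and $k+\alpha$-regularity of $f$ forces asymmetric estimates on the tangential and normal components of the polynomial projections that must be kept in balance across scales.
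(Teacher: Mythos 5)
You correctly identify several of the key building blocks — approximating polynomials of the form $U_0 P(x',t,r)$, the linear operator sending coefficients of $P$ to coefficients of $\frac{r}{U_0}H(U_0P)$, and a dyadic improvement-of-flatness iteration — and you correctly observe that time-dependence of $\Gamma$ makes $H U_0 \neq 0$, so error terms must be reabsorbed into $\frac{U_0}{r}\widetilde f$. But the architecture you propose does not match the paper's, and at least one step cannot close. The paper does not argue by induction on $k$. It first proves the full flat-case expansion (Theorem \ref{T:straight}) for all $k$ simultaneously via the conformal map $z\mapsto z^2$ in the $(x_{n-1},x_n)$ plane, and then runs an improvement-of-flatness iteration (Lemma \ref{L:improflat}) in which the rescaled remainder at each dyadic scale is compared to a solution of the \emph{flat} problem, not to a $(k-1)$-case instance of the theorem. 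Your suggestion to ``apply the inductive hypothesis to the rescaled remainder'' would not help: after subtracting the best degree-$(k+1)$ approximation and rescaling, the remainder satisfies a problem of the same order, not one lower, so the $(k-1)$-hypothesis is not the right comparison object.

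The more serious gap is your reliance on only the ``Hopf-type expansion $U = U_0(c + O(r^{\alpha}))$ obtained from the base case'' to build a degree-$(k+1)$ tangent polynomial for $u/U$. Defining approximating polynomials for $u/U$ requires computing $H(UP)$ to parabolic order $k$, and that in turn requires an order-$k$ expansion of $U/U_0$, not an order-$\alpha$ one. The paper obtains this by first establishing the intermediate Schauder-type result $u/U_0 \in H^{k+\alpha}_{x'tr}$ (Theorem \ref{T:schauder}) and then \emph{applying it to $U$} with $f\equiv 0$, giving $U = U_0\bigl(P_0 + O(|X|^{k+\alpha})\bigr)$ with $P_0$ of degree $k$ and nonzero constant term (Lemma \ref{L:P0}); the derivative expansions in Lemmas \ref{L:cone} and \ref{L:DrU} then feed the coefficients $c^{\mu m}_{\sigma l}$ in the linear system. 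Without this two-tier structure ($u/U_0$ first, $u/U$ second) the pointwise computation does not close. Also, the paper's $k=0$ case is not a direct invocation of the parabolic boundary Harnack of \cite{BG}: since $d$, $r$, $U_0$ are only Lipschitz when $\Gamma\in C^{1+\alpha}$, the paper constructs mollified replacements $\overline d$, $\overline r$, $\overline U_0$ by gluing in dyadic annuli (Lemma \ref{L:properties}) and runs the improvement-of-flatness argument on them (Theorem \ref{T:schauder2}, Lemma \ref{L:impflat2}, Proposition \ref{P:k0}); you would need to supply something of this kind.
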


\begin{remark}
In the case $k=0$, though we assume $C^{1+\alpha}$ regularity in both space and time as  opposed to $H^{1+\alpha}$ regularity, this does not prevent application of the result to the parabolic Signorini problem, as we will show.
\end{remark}

We will treat the case $k=0$ separately after dealing with $k\geq 1$. The main ingredient of the proof of Theorem \ref{T:main} in the case $k \geq 1$ is the following Schauder-type estimate:

\begin{theorem}\label{T:schauder}
Let $k \geq 1$.  Let $u\in C(\Psi_1)$ be a solution of $Hu = \frac{U_0}{r}f$ in $\Psi_1 \setminus \mathcal{P}$, where $U_0 = \frac{1}{\sqrt{2}}\sqrt{d+r}$, $f \in H^{k-1+\alpha}_{x'tr}(\Gamma \cap \Psi_1)$ and $\|f\|_{H^{k-1+\alpha}_{x'tr}(\Gamma \cap \Psi_1)} \leq 1$. Assume that $u$ vanishes continuously on $\mathcal{P}$, $\|u\|_{L^\infty(\Psi_1)} \leq 1$ and that it is even in $x_n$. Then 
\[
\Big\|\frac{u}{U_0}\Big\|_{H^{k+\alpha}_{x'tr}(\Gamma \cap \Psi_{1/2})} \leq C
\]
and 
\[
\Big\|\frac{\nabla_{x'}u}{U_0/r}\Big\|_{H^{k+\alpha}_{x'tr}(\Gamma \cap \Psi_{1/2})} \leq C
\]
where $C=C(n,k,\alpha)$.
\end{theorem}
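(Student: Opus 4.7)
The plan is to establish Theorem \ref{T:schauder} via a pointwise polynomial expansion of Campanato type: at each $Y\in \Gamma\cap\Psi_{1/2}$ I will construct a polynomial $P_Y(x',t,r)$ of parabolic degree $k$ with universally bounded coefficients such that
\[
|u(X)-U_0(X)\,P_Y(X)|\le C\,|X-Y|^{k+\alpha+\frac12}
\]
on a fixed parabolic cylinder around $Y$. Since $U_0\sim r^{1/2}$, dividing by $U_0$ yields the claimed expansion of $u/U_0$ at $Y$, and differentiating in $x'$, using that $\partial_{x'}U_0\sim U_0/r$, gives the parallel statement for $\nabla_{x'}u/(U_0/r)$. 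Uniform control of $P_Y$ as $Y$ varies in $\Gamma$ is obtained by comparing the expansions at two nearby base points and extracting the $H^{k+\alpha}_{x'tr}$ dependence of the coefficients from the triangle inequality. The argument is by iteration of an approximation lemma at dyadic scales, mirroring the elliptic scheme of \cite{DSS14a} but with delicate parabolic bookkeeping.

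\textbf{Model solutions and flattening.} After a parabolic change of variables that straightens $\Gamma$ near $Y$ and absorbs $g\in H^{k+1+\alpha}$ into the coefficients of the operator, the equation becomes $\widetilde{H}u=\frac{U_0}{r}\widetilde{f}+\text{l.o.t.}$ on the complement of the flat slit $\mathcal{P}_0=\{x_n=0,\,x_{n-1}\le 0\}$; by rescaling, the perturbation and $\|\widetilde{f}\|_{H^{k-1+\alpha}_{x'tr}}$ can be made arbitrarily small. In this flat model I would classify the \emph{model solutions}: functions $U_0\cdot Q(x',t,r)$, with $Q$ a polynomial, that are caloric on $\R^n\times\R\setminus\mathcal{P}_0$, even in $x_n$ and vanishing on $\mathcal{P}_0$. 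A direct computation expresses $H(U_0Q)$ via a first-order system on the coefficients of $Q$; a degree-by-degree inversion produces, for every homogeneous polynomial in $(x',t,r)$ of parabolic degree $j\le k$, a unique model solution whose leading part matches it.

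\textbf{Approximation lemma and iteration.} With model solutions at hand, a compactness argument yields the following key step: for every $\epsilon>0$ there exists $\delta>0$ such that if the flattening perturbation and $\|f\|_{H^{k-1+\alpha}_{x'tr}}$ are at most $\delta$ and $\|u\|_{L^\infty(\Psi_1)}\le 1$, then there is a polynomial $P$ of parabolic degree $k$ with universally bounded coefficients for which $\|u-U_0 P\|_{L^\infty(\Psi_{1/2})}\le\epsilon$. Failure would produce, in the limit, a caloric function on $\R^n\times\R\setminus\mathcal{P}_0$, even in $x_n$, vanishing continuously on $\mathcal{P}_0$, with polynomial growth of parabolic order $k+\frac12$, which the classification step identifies with a model solution, contradicting the assumed non-approximation. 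Applying the lemma at dyadic scales $\rho^j$ around $Y$ to the rescaled function $\rho^{-j(k+\alpha+1/2)}(u-U_0 \widetilde{P}_j)$ yields a Cauchy sequence of polynomials whose limit is $P_Y$ and satisfies the above pointwise estimate; this is the standard Campanato iteration, adapted to the parabolic scaling.

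\textbf{Main obstacle.} The Liouville-type classification in the parabolic setting is the crux and the part most sensitive to parabolic versus elliptic scaling: caloric functions of polynomial growth are in general arbitrary heat polynomials in $(x,t)$, and one must use evenness in $x_n$, continuous vanishing on $\mathcal{P}_0$, and the anisotropic homogeneity of $U_0$ to force all admissible solutions into the ansatz $U_0\cdot Q(x',t,r)$. A second delicate point is the flattening step itself: since $d$ depends on $t$ through $g(x'',t)$ and $d_t$ is only $H^{k+\alpha}$ in space-time (cf.\ Remark \ref{tbounds}), the induced perturbation of $H$ must be shown to respect the mixed $(x',t,r)$ H{\"o}lder structure so that it can be absorbed and carried through the induction on $k$.
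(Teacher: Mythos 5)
Your overall strategy -- pointwise polynomial expansions at slit points, reduction to approximating polynomials for $U_0\cdot Q$, and Campanato-type iteration driven by an improvement-of-flatness lemma whose ``model case'' is the straight slit -- coincides with the structure actually used in the paper (Proposition \ref{P:pointwise2} and Lemma \ref{L:improflat}, resting on Theorem \ref{T:straight}). However, what you are proposing is more an outline of that strategy than a proof of it, and the pieces you postpone are precisely where all the work resides.

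First, the ``classification of model solutions'' that you correctly single out as the crux is Theorem \ref{T:straight}, and neither a Liouville argument nor a characterization of caloric functions ``with polynomial growth of parabolic order $k+\tfrac12$'' is what makes it go through. The limit you would extract from a failed approximation lives on a fixed cylinder $\Psi_1\setminus\mathcal{P}_0$, not on all of space, so it carries no growth datum; one still needs a local Schauder-type expansion there. In the paper this is achieved by proving the flat-slit solution is $C^\infty$ in $(x'',t)$ (via the scale-invariant Wiener criterion and difference quotients), reducing to a two-dimensional equation in $(x_{n-1},x_n)$, applying the square map $\overline{u}(z)=u(z^2)$ with $z=y_{n-1}+iy_n$ followed by odd reflection in $y_{n-1}$, and then reading off the expansion from the fact that $U_0=y_{n-1}$, $x_{n-1}=\operatorname{Re}z^2$, $r=|z|^2$. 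Your sketch offers no substitute for this computation, and without it the approximation lemma cannot be closed.

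Second, two smaller but real gaps. The claim that dividing $|u-U_0P_Y|\le C|X-Y|^{k+\alpha+\frac12}$ by $U_0$ yields $|u/U_0-P_Y|\le C|X-Y|^{k+\alpha}$ is false as stated: $U_0\sim r^{1/2}$, but $r$ and $|X-Y|$ are comparable only in nontangential cones. The paper fixes this by first establishing a boundary Harnack upgrade $|u-U_0 P|\le CU_0\lambda^{k+\alpha}$ on $\Psi_{\lambda/2}$ (after controlling the perturbation part $\tilde v$ by barriers of the form $-U_0+U_0^2$), and only then dividing. Your proposal omits both the barrier step and the Harnack upgrade. Likewise, ``differentiating in $x'$, using $\partial_{x'}U_0\sim U_0/r$'' does not yield the second estimate: one cannot differentiate a sup-norm bound, and the paper instead proves pointwise derivative expansions in nontangential cones (Lemma \ref{L:cone}, via interior $C^{2,\alpha}$ Schauder on rescaled annular cones) before pushing them to $\Gamma$. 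You should flesh these three points out before the argument can be counted as a proof rather than a plan.
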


The proof of Theorem \ref{T:schauder} relies on the following special case when $\Gamma$ is straight:

\begin{theorem}\label{T:straight}
Let $\Gamma = \{x_{n-1}=0\}$ and $u\in C(\Psi_1)$ be a solution of $Hu=0$ in $\Psi_1 \setminus \mathcal{P}$. Assume that $u$ is even in $x_n$, $\|u\|_{L^\infty(\Psi_1)} \leq 1$ and it vanishes continuously on $\mathcal{P}$. Then, for any $k \geq 0$ there exists a polynomial $P_0(x',t,r)$ of parabolic degree $k$ so that $U_0P_0$ is caloric in $\Psi_1 \setminus \mathcal{P}$ and 
\[
\Big|\frac{u}{U_0}-P_0\Big| \leq C|X|^{k+1}, \ \ \text{ with }  \ \ C=C(n,k).
\]
\end{theorem}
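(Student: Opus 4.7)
The plan is a scaling and compactness argument, adapting the elliptic strategy of \cite{DSS14a} to the parabolic setting. I would proceed in three steps: classify the admissible polynomial corrections, set up a blow-up by contradiction, and conclude via a Liouville-type theorem on the slit domain.

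First I would describe the space $\mathcal{V}_k$ of polynomials $P_0(x',t,r)$ of parabolic degree $\le k$ for which $U_0 P_0$ is caloric. Since $\Gamma=\{x_{n-1}=0\}$ is flat, $U_0$ is time-independent and $\Delta U_0=0$, so $H(U_0 P)=U_0 HP+2\nabla U_0\cdot\nabla P$, and $U_0 P$ is caloric exactly when $HP+2(\nabla U_0/U_0)\cdot\nabla P=0$. Working in polar coordinates $(r,\theta)$ in the $(x_{n-1},x_n)$-plane, so that $U_0=\sqrt{r}\cos(\theta/2)$, separation of variables combined with evenness in $x_n$ and vanishing at $\theta=\pm\pi$ produces a basis of $\mathcal{V}_k$: finite combinations of caloric polynomials in $(x'',t)$ multiplied by the radial-angular modes coming from the slit-harmonic functions $r^{m+1/2}\cos((2m+1)\theta/2)$ and appropriate $r$-multiples, all truncated at parabolic degree $k$. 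In particular $\mathcal{V}_k$ is finite-dimensional, and every $U_0 P$ with $P\in\mathcal{V}_k$ is a caloric polynomial in the slit geometry, even in $x_n$, vanishing on $\mathcal{P}$.

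Second I would carry out a contradiction argument. Suppose the theorem fails for some $k$: there is a sequence of admissible solutions $u_m$ with $\|u_m\|_{L^\infty(\Psi_1)}\le 1$, scales $\rho_m\to 0$, and near-minimizers $P_m\in\mathcal{V}_k$ with
\[
M_m := \rho_m^{-(k+1)}\sup_{\Psi_{\rho_m}}|u_m/U_0-P_m|\longrightarrow\infty.
\]
Define the blow-ups
\[
w_m(X) := \frac{u_m(\rho_m X)-U_0(\rho_m X)P_m(\rho_m X)}{M_m\,\rho_m^{k+3/2}}.
\]
Using the parabolic scaling $U_0(\rho_m X)=\rho_m^{1/2}U_0(X)$, one checks that $w_m$ is caloric in $\Psi_{1/\rho_m}\setminus\mathcal{P}$, is even in $x_n$, vanishes on $\mathcal{P}$, and satisfies $\sup_{\Psi_1}|w_m/U_0|\simeq 1$. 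Interior parabolic estimates together with the boundary Harnack principle in the straight slit geometry (the $k=0$ version of the quotient estimate, available by the explicit 2D polar expansion of $U_0$ or from \cite{BG}) give uniform bounds on $w_m/U_0$ on every fixed compact set and uniform equicontinuity up to $\mathcal{P}$. A diagonal Arzel\`a--Ascoli extraction then produces a limit $w_\infty$ that is caloric in $(\R^n\times(-\infty,0])\setminus\mathcal{P}$, is even in $x_n$, vanishes on $\mathcal{P}$, and has polynomial growth $|w_\infty/U_0(X)|\le C(1+|X|)^{k+1}$.

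Finally a Liouville-type theorem in the slit geometry, proved by expanding $w_\infty$ in the separation-of-variables basis of the first step and using the anisotropic homogeneity of $U_0$ to discard all modes of parabolic order exceeding $k+3/2$, forces $w_\infty=U_0 Q$ for some polynomial $Q$ in $(x',t,r)$ of parabolic degree $\le k+1$. The near-optimality of $P_m$ at scale $\rho_m$ implies that the degree-$\le k$ part of $Q$ must vanish, so $Q$ is homogeneous of parabolic degree exactly $k+1$; but then the degree-$(k+1)$ piece of $Q$, un-rescaled, provides a correction to $P_m$ that reduces the error by a definite factor at scale $\rho_m$, contradicting $M_m\to\infty$. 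The main obstacle, with no counterpart in the elliptic proof of \cite{DSS14a}, is the compactness and Liouville step in the backward-time direction: the blow-up domains $\Psi_{1/\rho_m}$ extend to $t=-\infty$, so one must combine interior caloric estimates, the straight-slit parabolic boundary Harnack, and the anisotropic parabolic scaling of $U_0$ carefully enough to guarantee a well-defined limit with controlled polynomial growth in both space and time.
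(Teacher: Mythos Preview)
Your blow-up/Liouville strategy is a genuinely different route from the paper's, but the contradiction step does not close as written. After the Liouville theorem you obtain $w_\infty = U_0 Q$ with $Q$ of parabolic degree $\le k+1$, and near-optimality of $P_m$ forces the degree-$\le k$ part of $Q$ to vanish. You are then left with a nonzero $Q$ that is homogeneous of degree exactly $k+1$, and you claim this ``provides a correction to $P_m$.'' It does not: $Q$ has degree $k+1$, so $P_m + c\,Q \notin \mathcal{V}_k$, and subtracting it is not an admissible move in your minimization over $\mathcal{V}_k$. A nonzero limit $w_\infty = U_0 Q$ with $Q$ homogeneous of degree $k+1$ is perfectly consistent with the growth bound $|w_\infty/U_0|\le C(1+|X|)^{k+1}$ and with the orthogonality to $\mathcal{V}_k$, so there is no contradiction. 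This is the familiar obstruction to running a compactness argument at an \emph{integer} target exponent. One standard repair is to first prove the estimate with exponent $k+\alpha$ for some $\alpha\in(0,1)$ (where Liouville does force $\deg Q\le k$, hence $Q=0$), then apply that result at level $k+1$ to get an approximant $P'\in\mathcal{V}_{k+1}$ with error $O(|X|^{k+1+\alpha})$, and finally truncate $P'$ to its degree-$\le k$ part; but your proposal does not carry this out.

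The paper avoids this integer-exponent difficulty entirely by a direct argument with no compactness step. Since $\Gamma$ is flat, the equation is translation-invariant in $(x'',t)$, so $u$ is $C^\infty$ in those variables by iterated difference quotients; one then freezes $(x'',t)$ and views $\Delta_{x_{n-1},x_n}u = -\Delta_{x''}u + u_t =: f$ as a two-dimensional slit problem with smooth right-hand side. The conformal substitution $\bar u(z)=u(z^2)$ unfolds the slit to the line $\{y_{n-1}=0\}$ with zero Dirichlet data and equation $\Delta\bar u = 4|z|^2\bar f$, so $\bar u$ is $C^\infty$ up to that line after an odd reflection. Taylor expanding $\bar u$ at the origin and changing back to $(x_{n-1},x_n,r)$ coordinates yields $u = U_0 P_0 + U_0\,O(|X|^{k+1})$ directly; the caloricity of $U_0 P_0$ is then read off by rescaling the homogeneous pieces.
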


\section{Boundary Harnack Inequality}\label{S:boundaryHar}

In this section, we prove our main result  Theorem \ref{T:main}. We will return to the proofs of Theorems \ref{T:schauder} and \ref{T:straight} later. For now, we will use them to prove Theorem \ref{T:main}. To this end, we start by proving the following pointwise estimate:

\begin{proposition}\label{P:pointwise}
 Let $k \geq 1$. Let $0<U\in C(\Psi_1)$ be a solution of $HU=0$ in $\Psi_1 \setminus \mathcal{P}$, even in $x_n$ and vanishing continuously on $\mathcal{P}$ with $U(\underline{A_{3/4}})=1$. Let $u\in C(\Psi_1)$ be even in $x_n$ such that it vanishes on $\mathcal{P}$, $\|u\|_{L^\infty(\Psi_1)} \leq 1$, and 
 \[
 Hu(X) = \frac{U_0}{r}R(x',t,r) + F(X) \text{ in } \Psi_1 \setminus \mathcal{P},
 \]
where $|F(X)| \leq r^{-1/2}|X|^{k+\alpha}$, $R(x',t,r)$ is a polynomial of parabolic degree $k$ and $\|R\| \leq 1$. Then there is a polynomial $P(x',t,r)$ of parabolic degree $k+1$ such that 
 \[
 \Big|\frac{u}{U}-P\Big| \leq C|X|^{k+1+\alpha}
 \]
  where $C=C(n,k,\alpha,u(\underline{A_{3/4}}))$ and $\|P\| \leq C$.
\end{proposition}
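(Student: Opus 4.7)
The plan is to reduce to the clean Schauder estimate of Theorem \ref{T:schauder} by first stripping off the remainder $F$ via a particular solution $w$, then applying Theorem \ref{T:schauder} separately to $u-w$ and to $U$ to produce polynomial expansions of $(u-w)/U_0$ and $U/U_0$ in the variables $(x',t,r)$, and finally dividing these expansions after invoking the parabolic boundary Harnack principle to guarantee that the zeroth-order term in the expansion of $U/U_0$ is strictly positive. I expect the construction of $w$ in Step 1 to be the main obstacle.

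\emph{Step 1 (removing $F$).} Construct a particular solution $w$, even in $x_n$, of
\[
Hw = F \text{ in } \Psi_1 \setminus \mathcal{P}, \qquad w = 0 \text{ on } \mathcal{P},
\]
satisfying the pointwise bound
\[
|w(X)| \leq C\,U_0(X)\,|X|^{k+1+\alpha}.
\]
This bound is consistent with the hypothesis $|F(X)| \leq r^{-1/2}|X|^{k+\alpha}$: on the shell $|X|\sim\rho$ the worst contribution to $w$ comes from the neighborhood of $\Gamma$ where $r$ is small, and a double parabolic integration of $F$ together with the $r^{1/2}$-weight picked up by the Dirichlet condition on $\mathcal{P}$ produces an output of size $U_0(X)\rho^{k+1+\alpha}$. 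Rigorously, $w$ is built by Duhamel's formula against the Green's function of $H$ on $\Psi_1\setminus\mathcal{P}$ (with Dirichlet data on $\mathcal{P}$), decomposed over dyadic parabolic shells $\{|X|\sim 2^{-j}\}$ and summed. This step requires delicate pointwise Green's function estimates adapted to the slit geometry at the borderline $U_0$-weighted singularity of $F$.

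\emph{Step 2 (applying Theorem \ref{T:schauder}).} Set $\tilde u := u - w$; then $H\tilde u = \frac{U_0}{r}R$ in $\Psi_1 \setminus \mathcal{P}$. The polynomial $R$ of parabolic degree $k$ lies trivially in $H^{k+\alpha}_{x'tr}(\Gamma\cap\Psi_1)$ with norm controlled by a universal multiple of $\|R\|\leq 1$. Applying Theorem \ref{T:schauder} with $k+1$ in place of $k$ and $f=R$ produces a polynomial $\tilde P(x',t,r)$ of parabolic degree $k+1$ with
\[
\big|\tilde u/U_0 - \tilde P\big| \leq C|X|^{k+1+\alpha}.
\]
Combined with Step 1, this gives $|u/U_0 - \tilde P| \leq C|X|^{k+1+\alpha}$. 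Likewise, $U$ satisfies $HU=0$, and the parabolic Harnack inequality together with $U(\underline{A_{3/4}})=1$ shows $\|U\|_{L^\infty(\Psi_{7/8})}\leq C$; after rescaling, Theorem \ref{T:schauder} with $f\equiv 0$ and parameter $k+1$ yields a polynomial $\tilde Q(x',t,r)$ of parabolic degree $k+1$ with $|U/U_0 - \tilde Q|\leq C|X|^{k+1+\alpha}$.

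\emph{Step 3 (dividing).} By the parabolic boundary Harnack principle for slit domains (see, e.g., \cite{BG}), applied to the positive caloric functions $U$ and $U_0$ vanishing on $\mathcal{P}$, we have $U/U_0 \geq c_0 > 0$ in a neighborhood of $0$, with $c_0$ depending on $U(\underline{A_{3/4}})$ and universal constants; in particular $\tilde Q(0)\geq c_0 > 0$. Taylor-expanding $1/\tilde Q$ to parabolic degree $k+1$ at $0$ produces a polynomial $\tilde S(x',t,r)$ with $|1/\tilde Q - \tilde S|\leq C|X|^{k+2+\alpha}$ near $0$. Define $P$ to be the truncation of $\tilde P\cdot \tilde S$ to parabolic degree $k+1$; combining the two expansions via
\[
\frac{u}{U} = \frac{u/U_0}{U/U_0}
\]
and using the uniform lower bound $U/U_0 \geq c_0/2$ near $0$, a routine algebraic manipulation yields $|u/U - P|\leq C|X|^{k+1+\alpha}$ with $\|P\|\leq C$, completing the proof.
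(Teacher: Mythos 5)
Your plan does not work because it asks for one order more regularity than the slit possesses. The running hypothesis throughout Section \ref{S:notations} is $\|g\|_{H^{k+1+\alpha}}\le 1$, i.e.\ $\Gamma\in H^{k+1+\alpha}$, and Theorem \ref{T:schauder} (the Schauder estimate) applied at level $k$ is exactly calibrated to that: it only gives a degree-$k$ expansion $\|u/U_0\|_{H^{k+\alpha}_{x'tr}}\le C$. In Step 2 you apply Theorem \ref{T:schauder} ``with $k+1$ in place of $k$'' to both $\tilde u=u-w$ and to $U$ in order to manufacture degree-$(k+1)$ expansions of $\tilde u/U_0$ and $U/U_0$; but that invocation requires $\Gamma\in H^{k+2+\alpha}$, which is not available. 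With only the legitimate degree-$k$ expansions in hand, the algebraic division in Step 3, $u/U=(u/U_0)/(U/U_0)$, followed by Taylor-expanding the reciprocal, produces at best a degree-$k$ expansion of $u/U$ with error $O(|X|^{k+\alpha})$ --- one full order short of the claimed $O(|X|^{k+1+\alpha})$. The whole point of Proposition \ref{P:pointwise} (as emphasized in the Introduction: ``regularity of the quotient one order higher than one might expect'') is that dividing by $U$ gains an extra order beyond what Schauder theory for the slit alone permits, and this gain cannot be recovered by algebraic manipulation of two Schauder expansions that are both saturated at degree $k$.

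The paper's actual mechanism is quite different: it does apply Theorem \ref{T:schauder} to $U$, but only at the legitimate level $k$, to write $U=U_0(P_0+O(|X|^{k+\alpha}))$, and then verifies via Lemma \ref{L:P0} that $P_0(0)\neq0$. The extra order is then produced by the De Silva--Savin ``approximating polynomial'' iteration: one computes $H(x^\mu r^m t^s U)$ explicitly, extracts a linear system $A_{\sigma l s}$ for the coefficients of a degree-$(k+1)$ polynomial $P$, defines $P$ to be \emph{approximating} when these coefficients match $R$, and proves the improvement-of-flatness Lemma \ref{L:approx}, which at each dyadic scale $\rho\lambda$ improves the approximation $u\approx UP$ by comparing with the flat model (Theorem \ref{T:straight}) via compactness and barrier arguments. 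The gain of one order lives in this iteration, not in any single application of the Schauder theorem. A secondary concern is your Step 1: constructing $w$ with $Hw=F$, $w=0$ on $\mathcal P$, and $|w|\le CU_0|X|^{k+1+\alpha}$ from the bare bound $|F|\le r^{-1/2}|X|^{k+\alpha}$ is essentially the barrier estimate \eqref{e0} repackaged over all dyadic shells simultaneously, and you give no construction --- but even with a perfect $w$ the argument collapses at Step 2 for the reason above.
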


\begin{proof}[Proof of Proposition \ref{P:pointwise}]

First notice that after a dilation we may assume $\|g\|_{H^{k+1+\alpha}} \leq \delta$, $|R| \leq \delta$ and $|F| \leq \delta r^{-1/2}|X|^{k+\alpha}$. 
Our first goal is to obtain an expression for $U$ in terms of $U_0$. With this in mind, we apply the Schauder estimate, Theorem \ref{T:schauder}, with $U$ in the place of $u$. Thus, near $0 \in \Gamma$, 
\[ 
U(X) = U_0(X)(P_0(x',t,r)+O(|X|^{k+\alpha})),
\]
where $P_0$ is a polynomial of parabolic degree $k$. We will show below, in Lemma \ref{L:P0}, that the constant term of $P_0$ is nonzero, hence after multiplying $U$ by a constant we can assume that 
\begin{equation}\label{UU0}
U = U_0(1+\delta Q_0+\delta O(|X|^{k+\alpha})),
\end{equation}
where $Q_0$ is of parabolic degree $k$, $\|Q_0\| \leq 1$, and the constant term of $Q_0$ is zero. 

At this point we go back to the claim that the constant term of $P_0$ is nonzero and show the nondegeneracy of $U(X)/U_0(X)$ in the following lemma:

\begin{lemma}\label{L:P0}
$P_0$ has nonzero constant term.
\end{lemma}

\begin{proof}[Proof of Lemma \ref{L:P0}]
We first show that $U \geq Cr$ near the boundary.

\medskip\noindent
\emph{Step 1.} We show that $U \geq c_0 |x_n|$ in a small slab $\Psi_{1/2} \cap \{ 0 < x_n < c_n \}$. 

First, note by the Harnack inequality that there exists $\delta_0 >0$ such that $U \geq \delta_0$ on $\Psi_{1} \cap \{ x_n = c_n \}$. Applying Lemma 11.8 in \cite{DGPT}, we obtain $U \geq c_0 |x_n|$ in $\Psi_{1/2} \cap \{ 0 < x_n < c_n \}$.

\medskip
\noindent

\emph{Step 2.} We claim that $U(x,t) \geq Cr$ in $\Psi_{1/4}^{+}$.

Notice that, since $x_n$ is not always proportional to $r$, we must adjust the point at which our estimate is centered. Let $(x,t) \in \Psi_{1/4}^{+}$ and consider the point $(x_*,t_*) = (x',x_n+d, t-d^2)$, where $d=d(x,t)$. By our estimate, $U(x_*,t_*) \geq c_0(x_n+d)$. Moreover, $x_n + d$ is proportional to $r$, hence we have $U(x_*,t_*) \geq Cr$. Moreover, from the regularity  assumptions on $\Gamma$, it follows that  the distance of  $(x_*,t_*)$  from $\Gamma$  is proportional to $r$, therefore, by interior Harnack, we then obtain that $U(x,t) \geq Cr$ for $(x,t) \in \Psi_{1/4}^{+}$.

Now we  argue by contradiction. Suppose on the contrary,  the constant term of $P_0$ is zero. Since $U_0$ grows like $r^{1/2}$, therefore for $(x, 0) \in \{ r \geq |x'|\} \cap \Psi_{1/4}^{+}$, we have that $U \leq  Kr^{3/2}$ which is inconsistent with the fact that  $U \geq Cr$ near the boundary. This establishes  the lemma.
\end{proof}

To continue the proof of Proposition \ref{P:pointwise}, we follow De Silva's and Savin's method of ``approximating polynomials'' (see \cite{DSS} and \cite{DSS14b}), which we will define in Definition \ref{d:approx}. We begin by computing some preliminary estimates. Given $m \geq 0$, notice that 
\[
\Delta r^m = mr^{m-2}(m+d\Delta d).
\]
Letting $\overline{i}$ denote the multi-index with of all zeros except a 1 in the i-th position, we have for $|\mu| + m +2s \leq k +1$ with $\mu_n = 0$ that
\begin{align*}
H(x^{\mu}r^mt^sU) & = U \Delta(x^{\mu}r^mt^s) + 2\nabla_x(x^{\mu}r^mt^s)\nabla_x U + (HU)x^{\mu}r^mt^s - Ux^{\mu}mr^{m-1}t^s\frac{d}{r}d_t -Ux^{\mu}r^mst^{s-1}\\
& = U\Big[r^mt^s\mu_i(\mu_i-1)x^{\mu - 2\overline{i}} + x^{\mu}mr^{m-2}t^s(m+d\Delta d) +2\mu_ix^{\mu-\overline{i}}mr^{m-1}t^s\frac{d}{r}d_i \\ &-x^{\mu}mr^{m-1}t^s\frac{d}{r}d_t -x^{\mu}r^mst^{s-1}\Big]
+ 2(r^m\mu_ix^{\mu-\overline{i}}t^sU_i + mx^{\mu}r^{m-1}t^sU_r)\\
& = \frac{U}{r}A + 2B,
\end{align*}
using that $U$ is caloric.

We first note that $d \in H^{k+1+ \alpha}$ in a  neighborhood of $\Gamma$ (see pages 241-243 in \cite{Li}). By Taylor expansion at the origin and recalling that $\nabla_{x'',t}g(0)=0$, we find that 
\[
d_i = \delta^i_{n-1} + \ldots,  \ \ \ \  \Delta d = \Delta d (0) + \ldots, \ \ \ \text{ and } \ \ \ d = x_{n-1} + \ldots.
\]
Thus we see that 
\begin{align*}
A &= m(m+2\mu_{n-1})x^{\mu}r^{m-1}t^s + \mu_i(\mu_i-1)x^{\mu-2\overline{i}}r^{m+1}t^s\\ &- x^{\mu}r^{m+1}st^{s-1}  + b^{\mu m}_{\sigma l}x^{\sigma}r^lt^s + \delta O(|X|^{k+\alpha}),
\end{align*}
with $b^{\mu m}_{\sigma l}$ nonzero only for $|\mu| + m -1 + 2s< |\sigma| + l +2s \leq k$. Furthermore, we have 
\[
B = \frac{U_0}{r}\Big[\frac{1}{2}r^m\mu_{n-1}x^{\mu-\overline{n-1}}t^s + \frac{1}{2}mx^\mu r^{m-1}t^s + p^{\mu m}_{\sigma l}x^{\sigma}r^lt^s + \delta O(|X|^{k+\alpha})\Big],
\] 
where $p^{\mu m}_{\sigma l}$ is nonzero only for $|\mu| + m -1 +2s< |\sigma| + l +2s \leq k$. This follows from the representations 
\[
\nabla_{x} U = \frac{U_0}{r}\Big[\frac{1}{2}P_0\nabla_x d + r(P_0)_x + (P_0)_rd\nabla_x d + O(|X|^{k+\alpha})\Big]
\]
and 
\[
U_r = \nabla_x r \nabla_x U = \frac{U_0}{r}\Big[\frac{1}{2}P_0 + (P_0)_xd\nabla_x d + r(P_0)_r + O(|X|^{k+\alpha})\Big]
\]
which are shown in Lemma \ref{L:cone} and Lemma \ref{L:DrU}. Note that the terms $b^{\mu m}_{\sigma l}x^{\sigma}r^lt^s$ and $p^{\mu m}_{\sigma l}x^{\sigma}r^lt^s$ have strictly higher degree than the first terms. Moreover, the coefficients satisfy the estimates $b^{\mu m}_{\sigma l} \leq C\delta$ and $p^{\mu m}_{\sigma l} \leq C\delta$ since they are linear combinations of the coefficients of the tangent polynomials at $0$ of $d\Delta d$, $dd_t$, and $dd_i$.

Hence we find that 
\begin{align*}
H(x^{\mu}r^mt^sU) &= \frac{U_0}{r}\Big[m(m+1+2\mu_{n-1})x^{\mu}r^{m-1}t^s + r^mt^s\mu_{n-1}x^{\mu-\overline{n-1}}\\ 
&+ \mu_i(\mu_i-1)x^{\mu-2\overline{i}}r^{m+1}t^s - x^{\mu}r^{m+1}st^{s-1} + c^{\mu m}_{\sigma l}x^{\sigma}r^lt^s + \delta O(|X|^{k+\alpha})\Big],
\end{align*}
where $c^{\mu m}_{\sigma l}$ is nonzero only for $|\mu| + m -1 +2s < |\sigma| + l +2s \leq k$ and $c^{\mu m}_{\sigma l} \leq C\delta$. Therefore, for a polynomial $P=a_{\mu m \eta}x^{\mu}r^mt^{\eta}$ of degree $k+1$, we obtain 
\[
H(UP) = \frac{U_0}{r}(A_{\sigma l s}x^{\sigma}r^lt^s + \delta O(|X|^{k+\alpha}))
\]
for $|\sigma| + l +2s \leq k$ with 

\begin{align*}
A_{\sigma l s} &= (l+1)(l+2+2\sigma_{n-1})a_{\sigma(l+1)s} + (\sigma_{n-1}+1)a_{(\sigma +\overline{n-1})ls} \\ &+(\sigma_i +1)(\sigma_i+2)a_{(\sigma+2\overline{i})(l-1)s} - (s+1)a_{\sigma(l-1)(s+1)} + c^{\mu m}_{\sigma l}a_{\mu m s}. 
\end{align*}

Notice that $a_{\sigma(l+1)s}$ can be written in terms of $A_{\sigma l s}$ and a linear combination of $a_{\mu m \eta}$ with either $|\mu| + m +2\eta< |\sigma| + l +1 +2s$, or $|\mu| + m + 2\eta= |\sigma| + l +1 +2s$ and $m < l+1$. Therefore the above equation uniquely determines the coefficients $a_{\mu m \eta}$ for given $A_{\sigma l s}$ and $a_{\mu 0 \eta}$.

We are now ready to specify what it means to be an \emph{approximating polynomial}. 

\begin{definition}\label{d:approx} We define $P$ to be \emph{approximating for $u/U$ at $0$} if the coefficients $A_{\sigma l s}$ are the same as the coefficients of $R$.
\end{definition}

The rest of the proof of Proposition \ref{P:pointwise} relies on the following Lemma:

\begin{lemma}\label{L:approx}
There exist universal constants $C$, $\rho$, and $\delta$ depending solely on $n,k,\alpha$ such that if $P$ is an approximating polynomial for $u/U$ in $\Psi_\lambda \setminus \mathcal{P}$ with $\|P\| \leq 1$ and
\[
\|u-UP\|_{L^\infty(\Psi_\lambda \setminus \mathcal{P})} \leq \lambda^{3/2+k+\alpha},
\]
then there is an approximating polynomial $\overline{P}$ for $u/U$ in $\Psi_{\rho\lambda} \setminus \mathcal{P}$ with 
\[
\|u-U\overline{P}\|_{L^\infty(\Psi_{\rho\lambda} \setminus \mathcal{P})} \leq (\rho\lambda)^{3/2+k+\alpha}
\]
and 
\[
\|\overline{P}-P\|_{L^\infty(\Psi_\lambda)} \leq C\lambda^{k+1+\alpha}.
\]
\end{lemma}

\begin{proof}[Proof of Lemma \ref{L:approx}]
First, let $u = UP + \lambda^{k+3/2+\alpha}\tilde{u}(\tilde{X})$ where $\tilde{X}=(x/\lambda,t/\lambda^2)$. Since $P$ is approximating, we have 
\[
H\tilde{u}(\tilde{X}) = \lambda^{1/2-k-\alpha}(F(X) - \delta \frac{U_0}{r} O(|X|^{k+\alpha})) = \tilde{F}(\tilde{X}).
\]
Note that by hypothesis we have $|\tilde{u}(X)| \leq 1$ and $|H\tilde{u}(X)| \leq C \delta r^{-1/2}$ in $\Psi_1$. Denote the rescalings of $\Gamma$, $\mathcal{P}$, $U_0$, and $U$ from $\Psi_\lambda$ to $\Psi_1$ by $\tilde{\Gamma}$, $\tilde{\mathcal{P}}$, $\tilde{U_0}$, and $\tilde{U}$.

We split $\tilde{u}$ into two parts, $\tilde{u}= \tilde{u}_0 + \tilde{v}$ with $H\tilde{u}_0=0$ in $\Psi_1 \setminus \tilde{\mathcal{P}}$, $\tilde{u}_0 = \tilde{u}$ on $\partial_{p} \Psi_1 \cup \tilde{\mathcal{P}}$ and $|H\tilde{v}| \leq C \delta r^{-1/2}$ in $\Psi_1 \setminus \tilde{\mathcal{P}}$, $\tilde{v} = 0$ on $\partial_{p} \Psi_1 \cup \tilde{\mathcal{P}}$. Here $\partial_{p} \Psi_1$ refers to the parabolic  boundary of $\Psi_1$. Moreover, we have the following estimate:

\begin{equation}\label{e0}
  \| \tilde{v} \|_{L^{\infty}(\Psi_1)} \leq C\delta \tilde{U}_0
\end{equation}
 In order to establish \eqref{e0}, we use as lower (upper) barriers multiples of $\overline{v} = -U_0 + U_0^2$ similar to the ones used in the proof of (5.6) in \cite{DSS14b}. In this regard , we first  note that $v_0 \leq 0$ in $\Psi_1$. Moreover from Remark \ref{tbounds} and calculations similar to those in the proof of (5.6) in \cite{DSS14b}, we have that
\begin{equation}\label{e1}
H\overline{v} \geq cr^{-1}
\end{equation}
 The above estimate \eqref{e1} implies that suitable multiples  of $\overline{v}$ can be used as  barriers to establish \eqref{e0}.  Now to estimate $\tilde{u}_0$, note that as $\delta$ tends to 0, $\tilde{\Gamma}$ converges to $\{x_{n-1}=0\}$, and $\tilde{u}_0$ is uniformly H{\"o}lder continuous in $\Psi_{1/2}$. Then by compactness, for $\delta$ small enough, we can approximate $\tilde{u}_0$ in $\Psi_{1/2}$ by a solution of the flat case $\Gamma = \{ x_{n-1}=0\}$. By Theorem \ref{T:straight} and the fact that $\tilde{U} \rightarrow \tilde{U_0}$ uniformly as $\delta \rightarrow 0$, we find $$\| \tilde{u}_0-\tilde{U}Q \|_{L^\infty(\Psi_\rho)} \leq C \rho^{k+2+1/2}$$ for a polynomial $Q$ of degree $k+1$ with $\|Q\| \leq C$. Moreover, since $U_0Q$ is caloric, we conclude from the linear system we found earlier that the coefficients of $Q$ satisfy $$(l+1)(l+2+2\sigma_{n-1})q_{\sigma (l+1) s} + (\sigma_{n-1}+1)q_{(\sigma +\overline{n-1})ls} $$ $$+(\sigma_i +1)(\sigma_i+2)q_{(\sigma+2\overline{i})(l-1)s} -(s+1)q_{\sigma (l-1)(s+1)}=0,$$ noting that the $c^{\mu m}_{\sigma l}$'s are 0 in the flat case. Now using that $ \| \tilde{v} \|_{L^{\infty}(\Psi_1)} \leq C\delta \tilde{U}_0$ we have $$\| \tilde{u}-\tilde{U}Q \|_{L^\infty(\Psi_\rho)} \leq C \rho^{k+5/2} + C\delta \leq \frac{1}{2}\rho^{k+3/2+\alpha}$$ by choosing $\rho$ and then $\delta$ sufficiently, and universally, small.

This gives us $$|u-U(P+\lambda^{k+1+\alpha}Q(\tilde{X}))| \leq \frac{1}{2}(\lambda\rho)^{3/2+\alpha+k}$$ in $\Psi_{\rho\lambda}$. But $P(X) + \lambda^{k+1+\alpha}Q(\tilde{X})$ is not quite an approximating polynomial and so we must modify our $Q$ to some $\overline{Q}$. We choose $\overline{Q}$ such that it is approximating for $R=0$, and hence its coefficients solve the system $$(l+1)(l+2+2\sigma_{n-1})\overline{q}_{\sigma (l+1)s} + (\sigma_{n-1}+1)\overline{q}_{(\sigma +\overline{n-1})ls} $$ $$ +(\sigma_i +1)(\sigma_i+2)\overline{q}_{(\sigma+2\overline{i})(l-1)s} - (s+1) \overline{q}_{\sigma (l-1)(s+1)} + \overline{c}^{\mu m}_{\sigma l}\overline{q}_{\mu m s}=0$$ with $\overline{c}^{\mu m}_{\sigma l} = \lambda^{|\sigma|+l+1-|\mu|-m}c^{\mu m}_{\sigma l}$. Note then that $|\overline{c}^{\mu m}_{\sigma l}| \leq C \delta$. Subtracting the two linear systems for the coefficients of $Q$ and $\overline{Q}$, we conclude that we can choose a $\overline{Q}$ with $$\|\overline{Q}-Q\| \leq C \delta.$$ Then taking $\delta$ small enough and setting $\overline{P} = P + \lambda^{k+1+\alpha}\overline{Q}(\tilde{X})$,  we find $$\|u-U\overline{P}\|_{L^\infty(\Psi_{\rho\lambda} \setminus \mathcal{P})} \leq (\rho\lambda)^{3/2+k+\alpha}$$ and $$\|\overline{P}-P\|_{L^\infty(\Psi_\lambda)} \leq C\lambda^{k+1+\alpha}.$$

\end{proof}

We now finish the proof of Proposition \ref{P:pointwise}. Note that since $U \geq Cr^{1/2} \geq C_1 U_0$ (which can be seen using the nondegeneracy we showed in Lemma \ref{L:P0}), the pointwise Schauder estimate  gives us that $|\tilde{u}_0| \leq C' \tilde{U}_0 \leq C U$ where $C=C(n,k,\alpha,U(\underline{A_{3/4}}))$. From \eqref{e0},  we have $\| \tilde{v} \|_{L^{\infty}(\Psi_1)} \leq C\delta \tilde{U}$ (due to the nondegeneracy). Combining these, we get $|\tilde{u}| \leq C\tilde{U}$ in $\Psi_{1/2}$, thus the hypothesis of the proposition can be improved to $$|u-UP| \leq CU\lambda^{k+1+\alpha}$$ in $\Psi_{\lambda/2}$.

After multiplying $u$ by a small constant, the hypotheses of the lemma are satisfied for some small starting $\lambda_0$. We then iterate the lemma, obtaining a limiting approximating polynomial $P_0$ with $\|P_0\| \leq 1$ and $$|u-UP_0| \leq C|X|^{k+3/2+\alpha}$$ in $\Psi_1$. By the preceding remark, we can improve the right hand side, replacing it by $CU|X|^{k+1+\alpha}$. Thus $|\frac{u}{U}-P_0| \leq C|X|^{k+1+\alpha}$.

\end{proof}

We claim that Proposition \ref{P:pointwise} implies Theorem \ref{T:main} for the case $k \geq 1$. Indeed, notice that by assumption we have $f(X) = R(x',t,r) + h(X)$, where $R$ is a polynomial of degree $k$ and $h(X) = O(|X|^{k+\alpha})$. Then $F = \frac{U_0}{r}h(X)$ satisfies the hypothesis of Proposition \ref{P:pointwise} and Theorem \ref{T:main} follows for $k \geq 1$.

Now we return to the proofs of Theorems \ref{T:schauder} and \ref{T:straight}. We start with Theorem \ref{T:straight}.

\begin{proof}[Proof of Theorem \ref{T:straight}]
We start by proving that $u$ is $C^{\infty}$ in the $x'', t$ variables. We first note that since $\mathcal{P}= \{x_{n-1}\leq 0 \}$ satisfies  the Wiener type  criterion (see (3.2) in \cite{PS}) and is scale  invariant, $u$ is  H{\"o}lder continuous in $\Psi_{1/2}$. Moreover  since the equation is invariant after differentiating in the $x''$ and $t$ variables (one can take  difference quotients in $x'',t$ as an intermediate  step and pass to the limit), we have that $\nabla_{x''} u, u_t$ are  uniformly H{\"o}lder continuous in $\Psi_{1/2}$. Now since the  norms of the derivatives are controlled by the $L^{\infty}$ norm of $u$ in the interior, repeatedly differentiating with respect to $x'', t$ (i.e. by first  taking difference  quotients) establishes that $u$ is $C^{\infty}$ in the $x'', t$ variables.

\medskip

We rewrite the equation as 
\begin{equation}\label{rewrite}
\Delta_{x_{n-1},x_n}u = -\Delta_{x''}u - u_t = f(X)
\end{equation}
and solve \eqref{rewrite} in the two dimensional planes $(x',t) \equiv$ constant. Due to invariance of the equation in $x'', t$, $u$ and $f$ have the same regularity properties.

To this end, consider the transformation $\overline{u}(z) = u(z^2)$, $\overline{f}(z) = f(z^2)$ where $z = y_{n-1} + iy_n$. Then $\overline{u}$ solves $$\Delta \overline{u} = 4|z|^2\overline{f}$$ and vanishes on $y_{n-1}=0$. After an odd reflection in $y_{n-1}$, we find that \eqref{rewrite} is satisfied with $\overline{u}, \overline{f}$ even in $y_n$ and odd in $y_{n-1}$ such that $\overline{u}$ and $\overline{f}$ have the same regularity properties.  This implies that $\overline{u}$ is $C^\infty$ in $z$. Additionally, we can expand $\overline{u}$ at $0$ as 
\[
\overline{u} = y_{n-1}(P(y_{n-1}^2,y_{n}^2)+O(|z|^{2k+2})),
\]
 for some polynomial $P$ of degree $k$. Rewriting $P$ as a polynomial in $x_{n-1} = Re z^2 = y^2_{n-1} - y^2_n$ and $r = |z|^2 = y^2_{n-1}+ y^2_n$ of degree $k$ and noticing that $U_0 = y_{n-1}$, we obtain the expansion 
 \[
 u = U_0P + U_0O(|X|^{k+1}),
 \]
 from which the desired result follows for a polynomial $P_0(x',t,r)$ after considering the $C^\infty$ dependence on the $x'',t$ variables.

To see that $U_0P_0$ is in fact caloric, we expand $P_0$ as a sum of homogeneous polynomials, $P_0= \sum_{j=0}^k p_0^j$, with the degree of $p_0^j$ being $j$, and argue by induction. 

For $j=0$, $p_0^j$ is caloric. Assume that $p_0^j$ is caloric for $j \leq i < k$ and consider 
\[
v = u - U_0 \sum_{j=0}^i p_0^j = U_0(p_0^{i+1}(x',t,r) + o(|X|^{i+1})).
\]
Notice that $v$ is caloric. Defining the rescalings $v_\lambda(X) := \frac{v(\lambda X)}{\lambda^{1/2+i+1}}$, we obtain a sequence of caloric functions converging to $U_0p_0^{i+1}$ as $\lambda\rightarrow 0$. Consequently, $U_0p_0^{i+1}$ is caloric as well, and thus by induction all $p_0^j$ are, hence $U_0P_0$ is caloric.

\end{proof}

We now return to Theorem \ref{T:schauder}. Analogously to our approach to Theorem \ref{T:main}, we start by proving a pointwise estimate.

\begin{proposition}\label{P:pointwise2}
Let $k \geq 1$ and $\Gamma \in H^{k+1+\alpha}$. Let $u\in C(\Psi_1)$ be even in $x_n$ and vanish continuously on $\mathcal{P}$. Assume that $\|u\|_{L^\infty(\Psi_1)} \leq 1$ and $H u(X) = \frac{U_0}{r}R(x',t,r) + F(X)$ in $\Psi_1 \setminus \mathcal{P}$, for $R$ a polynomial of parabolic degree $k-1$ with $\|R\| \leq 1$ and $|F(X)| \leq r^{-1/2}|X|^{k-1+\alpha}$. Then there is a polynomial $P_0(x',t,r)$ of parabolic degree $k$ satisfying 
\[
\Big|\frac{u}{U_0} - P_0\Big| \leq C|X|^{k+\alpha}
\]
and 
\[
|H(u-U_0P_0)| \leq Cr^{-1/2}|X|^{k-1+\alpha}
\]
in $\Psi_1 \setminus \mathcal{P}$  for $C=C(n,k,\alpha,u(\underline{A_{3/4}}))$ and $\|P_0\| \leq C$.
\end{proposition}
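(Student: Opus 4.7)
The plan is to mirror the approximating polynomial iteration used in Proposition \ref{P:pointwise}, but with $U_0$ in place of $U$, which removes the nondegeneracy difficulty settled by Lemma \ref{L:P0} because $U_0$ is explicit and satisfies $c\,r^{1/2}\leq U_0\leq r^{1/2}$ away from $\mathcal{P}$. First I would carry out the same computation of $H(U_0 x^\mu r^m t^s)$ for $\mu_n=0$ and $|\mu|+m+2s\leq k$, using the $H^{k+1+\alpha}$ Taylor expansions $d=x_{n-1}+\ldots$, $d_i=\delta^i_{n-1}+\ldots$, $\Delta d=\Delta d(0)+\ldots$ together with the explicit formulas for $\nabla_x U_0$ and $(U_0)_r$. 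The outcome is an identity of the schematic form
\[
H(U_0\, x^\mu r^m t^s)=\frac{U_0}{r}\bigl[L(\mu,m,s) + c^{\mu m}_{\sigma l}\, x^\sigma r^l t^s + \delta\, O(|X|^{k-1+\alpha})\bigr],
\]
where $L$ produces a triangular linear expression in the coefficients of a degree-$k$ polynomial $P=\sum a_{\mu m\eta}x^\mu r^m t^\eta$, and the perturbations satisfy $|c^{\mu m}_{\sigma l}|\leq C\delta$. I would then declare $P$ \emph{approximating for $u/U_0$ at scale $\lambda$} if the coefficients arising from $H(U_0 P)$ match those of $R(x',t,r)$.

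Next I would prove the analogue of Lemma \ref{L:approx}: if $\|P\|\leq 1$ is approximating at scale $\lambda$ and $\|u-U_0 P\|_{L^\infty(\Psi_\lambda)}\leq \lambda^{k+1/2+\alpha}$, then there exists an approximating $\overline P$ at scale $\rho\lambda$ with $\|u-U_0\overline P\|_{L^\infty(\Psi_{\rho\lambda})}\leq (\rho\lambda)^{k+1/2+\alpha}$ and $\|\overline P-P\|\leq C\lambda^{k+\alpha}$. The argument is the same rescaling: set $\tilde u(\tilde X)=\lambda^{-(k+1/2+\alpha)}(u-U_0 P)(\lambda x,\lambda^2 t)$ so that, by the approximating property and the hypothesis on $F$, one has $\|\tilde u\|_\infty\leq 1$ and $|H\tilde u|\leq C\delta r^{-1/2}$ in $\Psi_1\setminus\tilde{\mathcal{P}}$. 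Split $\tilde u=\tilde u_0+\tilde v$ with $\tilde v$ vanishing on the parabolic boundary and $\tilde u_0$ caloric in $\Psi_1\setminus\tilde{\mathcal{P}}$. The barrier $\overline v=-U_0+U_0^2$ from \eqref{e1} transfers unchanged to give $\|\tilde v\|_\infty\leq C\delta\,\tilde U_0$. Since $\tilde{\mathcal{P}}$ flattens to $\{x_{n-1}=0\}$ as $\delta\to 0$, compactness together with Theorem \ref{T:straight} at order $k$ yields a polynomial $Q$ of degree $k$ with $U_0 Q$ caloric in the flat model and $\|\tilde u_0-\tilde U_0 Q\|_{L^\infty(\Psi_\rho)}\leq C\rho^{k+3/2}$. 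Choosing $\rho$ and then $\delta$ small in that order, and correcting $Q$ by an $O(\delta)$ amount to a $\overline Q$ that solves the $\delta$-perturbed approximating system, closes the iteration.

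Finally I would iterate this lemma starting from a sufficiently small $\lambda_0$ (after multiplying $u$ by a small constant) to obtain a limit polynomial $P_0$ of degree $k$ with $\|P_0\|\leq C$ and $|u-U_0 P_0|\leq C|X|^{k+1/2+\alpha}$ in $\Psi_1$. The same barrier/compactness that produced the iteration improves this to $|u-U_0 P_0|\leq C U_0|X|^{k+\alpha}$, giving the first conclusion $|u/U_0-P_0|\leq C|X|^{k+\alpha}$. For the second conclusion, the very construction of an approximating polynomial forces $H(U_0P_0)-\tfrac{U_0}{r}R=\tfrac{U_0}{r}\cdot\delta\,O(|X|^{k-1+\alpha})$ pointwise in $\Psi_1\setminus\mathcal{P}$; combined with the bound on $F$ and $U_0\lesssim r^{1/2}$ this yields $|H(u-U_0P_0)|\leq C r^{-1/2}|X|^{k-1+\alpha}$. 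The main obstacle I expect is the bookkeeping required to extract the triangular linear system with the correct $\delta$-perturbation from $H(U_0 x^\mu r^m t^s)$ in the curved setting and to verify that the $r^{-1/2}$ scaling of $|H\tilde u|$ (inherited from both $F$ and the perturbation of $U_0$ away from its flat model) matches the homogeneity needed for the barrier and compactness arguments to close uniformly across dyadic scales.
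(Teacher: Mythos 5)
Your proposal is correct and follows essentially the same route as the paper: the same computation of $H(U_0 x^\mu r^m t^s)$ with the triangular linear system, the same definition of approximating polynomial, the same rescaling/splitting with the barrier $-U_0+U_0^2$ and compactness to the flat Theorem~\ref{T:straight}, the same iteration across dyadic scales, and the same final improvement from $|X|^{k+1/2+\alpha}$ to $U_0|X|^{k+\alpha}$ via the boundary Harnack mechanism (where the paper additionally invokes Remark~\ref{bdh} to justify using $U_0$, which is not itself caloric, as the comparison function). The only point glossed over is precisely that last justification, but otherwise the argument is the paper's.
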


\begin{proof}[Proof of Proposition \ref{P:pointwise2}]
After an initial dilation, we can assume that $\|g\|_{H^{k+1+\alpha}} \leq \delta$, $|R| \leq \delta$, and $|F| \leq \delta r^{-1/2}|X|^{k-1+\alpha}$. We compute 
\begin{align*}
H(x^{\mu}r^mt^sU_0) &= U_0 \Big[ r^mt^s\mu_i(\mu_i-1)x^{\mu-2\overline{i}}+m(m+1)x^{\mu}r^{m-2}t^s-x^{\mu}t^s(\frac{1}{2}r^{m-1}-mdr^{m-2})\Delta d\\
& +2t^s(\frac{1}{2}r^{m-1}+mdr^{m-2})\nabla_xd\nabla_xx^{\mu} -x^{\mu}t^s(r^{m-1}+mdr^{m-2})d_t - x^{\mu}r^mst^{s-1} \Big].
\end{align*}
Recalling that $\nabla_{x'',t}g(0)=0$ and using the Taylor expansions $d_i = \delta^i_{n-1} + \ldots$, $\Delta d = \Delta d (0) + \ldots$, and $d = x_{n-1} + \ldots$, as in the proof of Proposition \ref{P:pointwise}, we obtain 
\begin{align*}
H(x^{\mu}r^mt^sU_0)&= \frac{U_0}{r}\Big[m(m+1+2\mu_{n-1})x^{\mu}r^{m-1}t^s + \mu_{n-1}x^{\mu-\overline{n-1}}r^mt^s \\ &+\mu_i(\mu_i-1)x^{\mu-2\overline{i}}r^{m+1}t^s-x^{\mu}r^{m+1}st^{s-1}+c^{\mu m}_{\sigma l}x^{\sigma}r^lt^s + \delta O(|X|^{k-1+\alpha})\Big]
\end{align*}
with $c^{\mu m}_{\sigma l} \neq 0$ only for $|\mu| + m -1 +2s< |\sigma| + l +2s \leq k-1$. Note additionally that $|c^{\mu m}_{\sigma l}| \leq C \delta$.

For a polynomial $P=a_{\mu m \eta}x^{\mu}r^mt^{\eta}$ of degree $k$, we have 
\[
H(U_0P) = \frac{U_0}{r}\left(A_{\sigma ls}x^{\sigma}r^lt^s+\delta O(|X|^{k-1+\alpha})\right)
\]
 with 
 \begin{align*}
 A_{\sigma ls} &=(l+1)(l+2+2\sigma_{n-1})a_{\sigma (l+1)s}+(\sigma_{n-1}+1)a_{(\sigma+\overline{n-1}) ls} \\ &+(\sigma_i+1)(\sigma_i+2)a_{(\sigma+2\overline{i})(l-1)s} -(s+1)a_{\sigma (l-1)(s+1)}+c^{\mu m}_{\sigma l}a_{\mu ms}.
 \end{align*}
 As in the proof of Proposition \ref{P:pointwise}, this systems determines the coefficients $a_{\mu m \eta}$ once $A_{\sigma l s}$ and $a_{\mu 0 \eta}$ are given, since $a_{\sigma (l+1)s}$ can be expressed in terms of $A_{\sigma ls}$ and a linear combination of $a_{\mu m \eta}$ with either $|\mu|+m +2\eta <|\sigma|+l+1+2s$, or $|\mu| +m +2\eta= |\sigma|+l+1+2s$ and $m<l+1$.

We define an approximating polynomial P for $u/U_0$ to be a polynomial $P=a_{\mu m \eta}x^{\mu}r^mt^{\eta}$ as above where the coefficients $A_{\sigma ls}$ coincide with the coefficients of $R$. The rest of the proof rests on an improvement of flatness lemma as in the proof of Proposition \ref{P:pointwise}:

\begin{lemma}\label{L:improflat}
There exist universal constants $C$, $\rho$, and $\delta$ depending on $n,k,\alpha$ such that if $P$ is an approximating polynomial for $u/U_0$ in $\Psi_\lambda \setminus \mathcal{P}$ such that $\|P\| \leq 1$ and  
\[
\|u-U_0P\|_{L^\infty(\Psi_\lambda \setminus \mathcal{P})} \leq \lambda^{1/2+k+\alpha},
\]
then there is an approximating polynomial $\overline{P}$ for $u/U_0$ in $\Psi_{\rho\lambda} \setminus \mathcal{P}$ with 
\[
\|u-U_0\overline{P}\|_{L^\infty(\Psi_{\rho\lambda} \setminus \mathcal{P})} \leq (\rho\lambda)^{1/2+k+\alpha}
\]
and 
\[
\|\overline{P}-P\|_{L^\infty(\Psi_\lambda)} \leq C\lambda^{k+\alpha}.
\]
\end{lemma}

\begin{proof}[Proof of Lemma \ref{L:improflat}]
First, let $u = U_0P + \lambda^{k+1/2+\alpha}\tilde{u}(\tilde{X})$ where $\tilde{X}=(x/\lambda,t/\lambda^2)$. Since $P$ is approximating, we have $$H\tilde{u}(\tilde{X}) = \lambda^{3/2-k-\alpha}\left(F(X) - \delta \frac{U_0}{r} O(|X|^{k+\alpha})\right) = \tilde{F}(\tilde{X}).$$ Note that by hypothesis we have $|\tilde{u}(X)| \leq 1$ and $|H\tilde{u}(X)| \leq C \delta r^{-1/2}$ in $\Psi_1$. Denote the rescalings of $\Gamma$, $\mathcal{P}$, $U_0$, and $U$ from $\Psi_\lambda$ to $\Psi_1$ by $\tilde{\Gamma}$, $\tilde{\mathcal{P}}$, $\tilde{U_0}$, and $\tilde{U}$.

Now we split $\tilde{u}$ into two parts, $\tilde{u}= \tilde{u}_0 + \tilde{v}$ with $H\tilde{u}_0=0$ in $\Psi_1 \setminus \tilde{\mathcal{P}}$, $\tilde{u}_0 = \tilde{u}$ on $\partial_{p} \Psi_1 \cup \tilde{\mathcal{P}}$ and $|H\tilde{v}| \leq C \delta r^{-1/2}$ in $\Psi_1 \setminus \tilde{\mathcal{P}}$, $\tilde{v} = 0$ on $\partial_{p} \Psi_1 \cup \tilde{\mathcal{P}}$. By constructing barriers, we find that $$ \| \tilde{v} \|_{L^{\infty}(\Psi_1)} \leq C\delta \tilde{U}_0.$$ (Use as barriers multiples of $\overline{v} = -U_0 + U_0^2$ as before). Now to estimate $\tilde{u}_0$, note that as $\delta$ tends to 0, $\tilde{\Gamma}$ converges to $\{x_{n-1}=0\}$, and $\tilde{u}_0$ is uniformly H{\"o}lder continuous in $\Psi_{1/2}$. Then by compactness, for $\delta$ small enough, we can approximate $\tilde{u}_0$ in $\Psi_{1/2}$ by a solution of the flat case $\Gamma = \{ x_{n-1}=0\}$. By Theorem \ref{T:straight}, we find $$\| \tilde{u}_0-\tilde{U}_0Q \|_{L^\infty(\Psi_\rho)} \leq C \rho^{k+1+1/2}$$ for a polynomial $Q$ of degree $k$ with $\|Q\| \leq C$. Moreover, since $U_0Q$ is caloric, we conclude from the linear system we found earlier that the coefficients of $Q$ satisfy $$(l+1)(l+2+2\sigma_{n-1})q_{\sigma (l+1)s} + (\sigma_{n-1}+1)q_{(\sigma +\overline{n-1})ls}$$ $$+(\sigma_i +1)(\sigma_i+2)q_{(\sigma+2\overline{i})(l-1)s} - (s+1)q_{\sigma (l-1)(s+1)}=0.$$ Now using that $ \| \tilde{v} \|_{L^{\infty}(\Psi_1)} \leq C\delta \tilde{U}_0$ we have $$\| \tilde{u}-\tilde{U}_0Q \|_{L^\infty(\Psi_\rho)} \leq C \rho^{k+3/2} + C\delta \leq \frac{1}{2}\rho^{k+1/2+\alpha}$$ by choosing $\rho$ and then $\delta$ sufficiently, and universally, small.

This gives us $$|u-U_0(P+\lambda^{k+\alpha}Q(\tilde{X}))| \leq \frac{1}{2}(\lambda\rho)^{1/2+\alpha+k}$$ in $\Psi_{\rho\lambda}$. But $P(X) + \lambda^{k+\alpha}Q(\tilde{X})$ is not quite an approximating polynomial and so we must modify our $Q$ to some $\overline{Q}$. We choose $\overline{Q}$ such that it is approximating for $R=0$, and hence its coefficients solve the system $$(l+1)(l+2+2\sigma_{n-1})\overline{q}_{\sigma (l+1)s} + (\sigma_{n-1}+1)\overline{q}_{(\sigma +\overline{n-1})ls}$$ $$+(\sigma_i +1)(\sigma_i+2)\overline{q}_{(\sigma+2\overline{i})(l-1)s} -(s+1) \overline{q}_{\sigma (l-1)(s+1)}+ \overline{c}^{\mu m}_{\sigma l}\overline{q}_{\mu m s}=0$$ with $\overline{c}^{\mu m}_{\sigma l} = \lambda^{|\sigma|+l+1-|\mu|-m}c^{\mu m}_{\sigma l}$. Note then that $|\overline{c}^{\mu m}_{\sigma l}| \leq C \delta$. Subtracting the two linear systems for the coefficients of $Q$ and $\overline{Q}$, we conclude that we can choose a $\overline{Q}$ with $$\|\overline{Q}-Q\| \leq C \delta.$$ Then taking $\delta$ small enough and setting $\overline{P} = P + \lambda^{k+\alpha}\overline{Q}(\tilde{X})$,  we find $$\|u-U_0\overline{P}\|_{L^\infty(\Psi_{\rho\lambda} \setminus \mathcal{P})} \leq (\rho\lambda)^{1/2+k+\alpha}$$ and $$\|\overline{P}-P\|_{L^\infty(\Psi_\lambda)} \leq C\lambda^{k+\alpha}.$$

\end{proof}

After multiplying $u$ by a small constant, the hypotheses of the lemma are satisfied for some small starting $\lambda_0$. We then iterate the lemma, obtaining a limiting approximating polynomial $P_0$ with $\|P_0\| \leq 1$ and 
\[
|u-U_0P_0| \leq C|X|^{k+1/2+\alpha} \ \ \text{ in } \ \ \Psi_1.
\]

The boundary  Harnack inequality gives us that $|\tilde{u_0}| \leq C \tilde{U}_0$( see \cite{PS} and the  Remark below). Since $ \| \tilde{v} \|_{L^{\infty}(\Psi_1)} \leq C\delta \tilde{U}_0$, we get $|\tilde{u}| \leq C\tilde{U}_0$ in $\Psi_{1/2}$. Thus the hypothesis of the proposition can be improved to 
\[
|u-U_0P| \leq CU_0\lambda^{k+\alpha} \ \ \text{ in } \ \ \Psi_{\lambda/2}.
\]
Consequently, we can improve the right hand side of our previous estimate by replacing it with $CU_0|X|^{k+\alpha}$, and thus \[
\Big|\frac{u}{U_0}-P_0\Big| \leq C|X|^{k+\alpha}.
\]
 Moreover, since $P_0$ is approximating, we find 
 \[
 |H(u-U_0P_0)| \leq Cr^{-1/2}|X|^{k-1+\alpha} \ \ \text{ in } \ \ \Psi_1 \setminus \mathcal{P}.
 \]
\end{proof}
\begin{remark}\label{bdh}
In the argument above,  although $U_0$ is not caloric, one can still apply boundary Harnack with $U_0$ because it is comparable to a caloric  function $H_0$ such that $H_0$ vanishes on $\mathcal{P}$. Here are the relevant details: Assume $\delta <<1$. Let $V_1= (1+ C\delta r) U_0$. Then from calculations similar to proposition 3.2 in \cite{DSS11}, we have that $V_1$ is a subsolution to the heat equation which vanishes on $\mathcal{P'}$ for $C$ sufficiently large. Similarly, $V_2= (1- C\delta r )U_0$ is a supersolution to the heat equation which also vanishes on $\mathcal{P}$. Furthermore, we can assume that $C\delta < \frac{1}{2}$ which implies 
\begin{equation}\label{cmp}
 \frac{1}{2} V_1 \leq V_2 \leq V_1
\end{equation}
 Therefore, by the Perron Process, there exists a caloric function $H_0$ which vanishes on $\mathcal{P}$ and is comparable to $U_0$  by \eqref{cmp}.
\end{remark}

\begin{proof}[Proof of Theorem \ref{T:schauder}]
First, by noting the expansion of $f$ as $f(X) = R(x',t,r) + O(|X|^{k-1+\alpha})$ where $R$ has parabolic degree $k-1$, we see that the assumptions of the proposition are satisfied. By applying the proposition, we directly obtain the first estimate in Theorem \ref{T:schauder}, namely that 
\[
\Big\|\frac{u}{U_0}\Big\|_{H^{k+\alpha}_{x'tr}(\Gamma \cap \Psi_{1/2})} \leq C.
\]

The second estimate in Theorem \ref{T:schauder} will follow by using the following estimates for the derivatives of $u$ close to $\Gamma$.

\begin{lemma}\label{L:cone}
Let $u$ be as in Proposition 2. Let $1 \leq i \leq n-1$. Then 
\[
\Big|u_i - \frac{U_0}{r}P_0^i\Big| \leq C|X|^{k-1/2+\alpha}
\]
in the cone $\{ |(x_{n-1},x_n)| \geq \text{max}(|x''|, |t|^{1/2})\}$, where $P_0^i$ has parabolic degree $k$ and $(U_0/r)P_0^i$ is obtained through formal differentiation of $U_0P_0$ at $0$ in the $x_i$ direction.
\end{lemma}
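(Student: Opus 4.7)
The plan is to set $w := u - U_0 P_0$, where $P_0$ is the polynomial produced by Proposition \ref{P:pointwise2}, and to estimate $\nabla_x w$ by a standard interior parabolic gradient bound on a parabolic cylinder that stays away from $\Gamma$. The key geometric remark is that in the cone $\{|(x_{n-1},x_n)| \geq \max(|x''|,|t|^{1/2})\}$ one has $|(x_{n-1},x_n)| \sim |X|$, and since $d = x_{n-1} + O(|X|^{1+\alpha})$ and $r = \sqrt{x_n^2 + d^2}$, it follows that $r \sim |X|$ and $U_0 \sim |X|^{1/2}$ there. Consequently, for any $X_0$ in the cone, the parabolic cylinder $Q_\lambda(X_0)$ of radius $\lambda = c|X_0|$ (with $c$ a small universal constant) still lies in the cone and is uniformly separated from $\mathcal{P}$.

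Proposition \ref{P:pointwise2} provides the two bounds
\[
|w| \leq C\, U_0\, |X|^{k+\alpha} \leq C|X|^{k+1/2+\alpha}, \qquad |Hw| \leq C r^{-1/2}|X|^{k-1+\alpha} \leq C|X|^{k-3/2+\alpha},
\]
both valid throughout the cone. The standard interior parabolic gradient estimate for the heat operator on $Q_\lambda(X_0)$ then gives
\[
|\nabla_x w(X_0)| \leq \frac{C}{\lambda}\sup_{Q_\lambda(X_0)}|w| + C\lambda \sup_{Q_\lambda(X_0)}|Hw| \leq C|X_0|^{k-1/2+\alpha}.
\]
This reduces the problem to matching $(U_0 P_0)_{x_i}$ with the target $(U_0/r)P_0^i$ up to a quantity of the same order.

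The final step is a direct computation of the formal derivative. Using $(U_0)_{x_i} = d_i U_0/(2r)$ and $r_{x_i} = (d/r)d_i$ for $1 \leq i \leq n-1$, one obtains
\[
(U_0 P_0)_{x_i} = \frac{U_0}{r}\Big[\tfrac{1}{2}d_i P_0 + r(P_0)_{x_i} + d\, d_i\,(P_0)_r\Big].
\]
Defining $P_0^i$ as the parabolic-degree-$k$ truncation of this bracket when the Taylor polynomials of $d$ and $d_i$ at the origin are substituted (these are $\pi(d) = x_{n-1} + \cdots$ and $\pi(d_i) = \delta^i_{n-1} + \cdots$, using $\nabla_{x'',t}g(0)=0$), the remainders $d - \pi(d)$ and $d_i - \pi(d_i)$ are of sizes $O(|X|^{k+1+\alpha})$ and $O(|X|^{k+\alpha})$ respectively, since $g \in H^{k+1+\alpha}$. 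After multiplication by $U_0/r \sim |X|^{-1/2}$, these contributions combine to give exactly the error $O(|X|^{k-1/2+\alpha})$ claimed. Adding $w_i$ and $(U_0 P_0)_{x_i}$ gives the lemma.

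The main obstacle I anticipate is the bookkeeping in the last step: verifying that every remainder coming from the Taylor expansions of $d$ and its first-order derivatives combines to give precisely the sharp exponent $k - 1/2 + \alpha$, without losing fractional orders when truncating the bracket to parabolic degree $k$. The interior gradient estimate itself is routine, since the equation for $w$ is a heat equation with bounded right-hand side in the cone; one must, however, be careful to use the parabolic scaling consistently in both the $|w|$ and $|Hw|$ pieces so that the two contributions balance at the exponent $k - 1/2 + \alpha$.
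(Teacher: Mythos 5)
Your argument is correct and follows essentially the same route as the paper: the scaled interior gradient estimate on a cylinder $Q_{c|X_0|}(X_0)$ is the same as the paper's rescaling to $\tilde u$ on $\mathcal{C}'$, and the Taylor-remainder bookkeeping for $(U_0P_0)_{x_i}$ matches the paper's computation of $\nabla_{x'}(U_0P_0)$ with $d,\nabla_{x'}d$ replaced by their Taylor polynomials. The only cosmetic imprecision is the claim $U_0\sim|X|^{1/2}$ in the cone (on the $d<0$ side one only has $U_0\lesssim |X|^{1/2}$), but since only the upper bound is used your estimates go through unchanged.
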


\begin{proof}[Proof of Lemma \ref{L:cone}]
We first note that in the cone $\{ |(x_{n-1},x_n)| \geq \text{max}( |x''|, |t|^{1/2})\}$, $r \geq C \text{max} (|x''|, |t|^{1/2})$. As in the proof of the previous lemma, take $\tilde{u}$ so that
\[
u-U_0P_0 = \lambda^{1/2+k+\alpha}\tilde{u}(\tilde{X}).
\]
Then $H\tilde{u}=\tilde{F}$ and $\| \tilde{u} \|_{L^\infty(\Psi_1)} \leq C$, where 
\[
\tilde{F}(\tilde{X}) = \lambda^{3/2-k-\alpha}F(X)-\frac{U_0}{r} \lambda^{3/2-k-\alpha}O(|X|^{k+\alpha}).
\]
Define $\mathcal{C} := \{|(x_{n-1},x_n)| \geq 2|x''|, 2|t|^{1/2}\} \cap (\Psi_1 \setminus \Psi_{1/4}).$ Then $\|\tilde{F}\|_{L^{\infty}(\mathcal{C})}\leq C$ and so 
\begin{equation}\label{e100}
| \nabla_{x'} \tilde{u}| \leq C \ \ \text{ in } \ \mathcal{C}' := \{|(x_{n-1},x_n)| \geq \text{max}(|x''|, |t|^{1/2})\} \cap (\Psi_{3/4} \setminus \Psi_{1/2}).
\end{equation}

Therefore, for $\lambda >0,$ $| \nabla_{x'} (u-U_0P_0) | \leq C \lambda^{k-1/2+ \alpha}$ in $\mathcal{C}'$. We also have 
\[
\nabla_{x'}(U_0P_0) = \frac{U_0}{r}\Big[\frac{1}{2}P_0\nabla_{x'}d +r\nabla_{x'}P_0 + (\partial_rP_0)d\nabla_{x'}d\Big].
\]
Since $d, \nabla_{x'}d \in H^{k+\alpha}$ we obtain
\[
\Big|\partial_i(U_0P_0) - \frac{U_0}{r}\Big[P_0^i(x',t,r)\Big]\Big| \leq C \frac{U_0}{r}|X|^{k+\alpha} \ \ \text{ in } \ \{|(x_{n-1},x_n)| \geq \text{max}(|x''|, |t|^{1/2})\},
\]
where $P_0^i$ has degree $k$. We conclude that 
\[
\Big|u_i - \frac{U_0}{r}P_0^i\Big| \leq C|X|^{k-1/2+\alpha} \ \  \text{ in } \  \{|(x_{n-1},x_n)| \geq \text{max}( |x''|, |t|^{1/2})\}.
\]

\end{proof}

To finish the proof of Theorem \ref{T:schauder}, we first note that $\tilde{F}$ is uniformly H{\"o}lder continuous in $\mathcal{P} \cap \mathcal{C}$. Since $\nabla_{x'}u$ vanishes on $\mathcal{P}$, from $C^{2, \alpha}$ Schauder estimates for $\tilde u$ in $\mathcal{C'}$, we have that $|\nabla_{x'}u | \leq Cr$. Since $U_0$ is comparable to $r^{1/2}$ in $\mathcal{C'}$, \eqref{e100} can be improved to $| \nabla_{x'} \tilde{u}| \leq C\tilde{U_0}$, and hence we can  improve our conclusion to
\begin{equation}\label{e101}
\Big|u_i - \frac{U_0}{r}P_0^i\Big| \leq C\frac{U_0}{r}|X|^{k+\alpha}
\end{equation}
in $\{|(x_{n-1},x_n)| \geq \text{max}( |x''|, |t|^{1/2})\}$, i.e,  we have estimates in non-tangential cones to $\Gamma$. The second estimate in Theorem \ref{T:schauder} follows from \eqref{e101}, by decomposing $f= R(x',t,r) + F,$ where  $R$ is a polynomial of degree at most $k$ and $F= O(|X|^{k-1+ \alpha})$, and by employing  arguments similar to Remark 5.6 in \cite{DSS14b}.

\end{proof}

\begin{lemma}\label{L:DrU}
Take $\Gamma$ and $U$ as in Proposition \ref{P:pointwise}. Then 
\[
\Big|\partial_rU - \frac{U_0}{r}P_0^r\Big| \leq C \frac{U_0}{r}|X|^{k+\alpha},
\]
where $P_0^r$ has degree $k$ and 
\[
\partial_rU = \frac{U_0}{r}\Big[\frac{1}{2}P_0 + \nabla_xP_0d\nabla d +r(D_rP_0) + O(|X|^{k+\alpha})\Big].
\]
\end{lemma}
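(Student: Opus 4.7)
The plan is to mirror Lemma \ref{L:cone} with the directional derivative $\partial_r=\nabla_xr\cdot\nabla_x$ playing the role of $\nabla_{x'}$, and to combine this with a direct chain-rule computation of $\partial_r(U_0P_0)$.

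First, since $HU=0$, I apply Proposition \ref{P:pointwise2} to $U$ with $R\equiv 0$ and $F\equiv 0$ to obtain a polynomial $P_0(x',t,r)$ of parabolic degree $k$ with $\|P_0\|\le C$ satisfying
\[
|U-U_0P_0|\le C\,U_0\,|X|^{k+\alpha}\quad\text{and}\quad |H(U-U_0P_0)|\le C\,r^{-1/2}|X|^{k-1+\alpha}
\]
in $\Psi_1\setminus\mathcal{P}$.

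Second, I compute $\partial_r(U_0P_0)$ explicitly. Because $d$ is the signed distance to $\Gamma_t$, we have $|\nabla_{x'}d|=1$ in a neighborhood of $\Gamma$, so that
\[
|\nabla_xr|^2=\frac{d^2|\nabla_{x'}d|^2+x_n^2}{r^2}=1,\qquad \nabla_xr\cdot\nabla_xd=\frac{d}{r}.
\]
Writing $U_0=\tfrac{1}{\sqrt{2}}(d+r)^{1/2}$ and applying the chain rule,
\[
\partial_rU_0=\frac{1}{4U_0}\bigl(\nabla_xr\cdot\nabla_xd+|\nabla_xr|^2\bigr)=\frac{U_0}{2r},
\]
while, since $P_0(x',t,r)$ has no explicit $x_n$ dependence,
\[
\partial_rP_0=\frac{d}{r}\nabla d\cdot\nabla_{x'}P_0+D_rP_0.
\]
Summing contributions via the product rule and expanding $d\nabla d$ in its Taylor polynomial of degree $k$ at $0$ (possible since $d\in H^{k+1+\alpha}$ near $\Gamma$), I obtain
\[
\partial_r(U_0P_0)=\frac{U_0}{r}\Bigl[\tfrac12 P_0+d\nabla d\cdot\nabla_xP_0+rD_rP_0\Bigr]=\frac{U_0}{r}\bigl(P_0^r+O(|X|^{k+\alpha})\bigr)
\]
for a polynomial $P_0^r$ of parabolic degree $k$, which is the content of the explicit formula stated in the lemma.

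Third, to bound $\partial_r(U-U_0P_0)$ I copy the rescaling argument from the proof of Lemma \ref{L:cone}. Setting $V=U-U_0P_0$ and rescaling $\tilde V(\tilde X)=\lambda^{-(1/2+k+\alpha)}V(\lambda X)$ in the non-tangential cone $\mathcal{C}=\{|(x_{n-1},x_n)|\ge\max(|x''|,|t|^{1/2})\}$, Step 1 gives $\|\tilde V\|_{L^\infty}\le C$ and $\|H\tilde V\|_{L^\infty(\mathcal{C})}\le C$. Interior parabolic estimates then yield $|\nabla\tilde V|\le C$, whence $|\nabla V|\le C\lambda^{k-1/2+\alpha}$ in $\mathcal{C}$. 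Since $V$ vanishes on $\mathcal{P}$ and is even in $x_n$ (so $\partial_{x_n}V$ is odd and hence vanishes on $\{x_n=0\}\supset\mathcal{P}$), the $C^{2,\alpha}$ Schauder improvement carried out at the end of the proof of Lemma \ref{L:cone} upgrades this to $|\nabla V|\le C\frac{U_0}{r}|X|^{k+\alpha}$. Since $|\nabla_xr|=1$, this yields $|\partial_rV|\le C\frac{U_0}{r}|X|^{k+\alpha}$, and combining with Step 2 concludes the proof.

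The main obstacle is the final Schauder improvement: the naive interior bound $|\nabla V|\le C\lambda^{k-1/2+\alpha}$ is a factor of $\lambda^{1/2}\sim U_0$ too weak for the claimed estimate. The critical saving is the evenness of $U$ in $x_n$, which forces $\partial_{x_n}V$ to vanish on $\{x_n=0\}$ and hence on $\mathcal{P}$; this tangential-and-normal vanishing of $\nabla V$ on $\mathcal{P}$, combined with the fact that the rescaled boundary $\tilde{\mathcal{P}}$ is nearly flat, makes the $C^{2,\alpha}$ boundary Schauder estimate applicable and recovers the missing $U_0/r^{1/2}$ factor, precisely as in Lemma \ref{L:cone}.
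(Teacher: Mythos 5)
There is a genuine gap in Step~3, specifically in the claim that the evenness of $U$ in $x_n$ forces $\partial_{x_n}V$ to vanish on $\mathcal{P}$ and that this then licenses the same $C^{2,\alpha}$ Schauder improvement used at the end of the proof of Lemma~\ref{L:cone} for the \emph{full} gradient. Evenness gives $\partial_{x_n}V(x',0,t)=0$ only where $V$ is actually differentiable across $\{x_n=0\}$, and on the slit $\mathcal{P}$ it is not: $V=U-U_0P_0$ behaves like $U_0$ near $\mathcal{P}$, and $U_0=\tfrac{1}{\sqrt2}\sqrt{d+r}\sim \tfrac{|x_n|}{2\sqrt{|d|}}$ for $d<0$, so $V$ has a corner in $x_n$ across $\mathcal{P}$ and $\lim_{x_n\to 0^{\pm}}\partial_{x_n}V$ is a nonzero quantity of order $r^{-1/2}$. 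The Schauder improvement in Lemma~\ref{L:cone} applies only to the \emph{tangential} derivatives $\nabla_{x'}u$, which genuinely vanish continuously on $\mathcal{P}$; it cannot be extended to $\partial_{x_n}V$, and consequently you do not get the bound $|\nabla V|\le C\frac{U_0}{r}|X|^{k+\alpha}$.

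The paper avoids this issue by never trying to upgrade the normal derivative. It takes from Lemma~\ref{L:cone} the strong estimate only for the tangential components $U_i$, $1\le i\le n-1$, keeps the naive bound $|U_n-\partial_{x_n}(U_0P_0)|\le C|X|^{k-1/2+\alpha}$ in the cone $\mathcal{C}_0$, and then exploits the smallness of the coefficient $\partial_{x_n}r=x_n/r$: since $U_0^2=\tfrac12(d+r)$, one checks directly that $|\partial_{x_n}r|=\frac{|x_n|}{r}\le C\,r^{-1/2}U_0$, so
\[
\big|\partial_{x_n}r\big|\cdot\big|U_n-\partial_{x_n}(U_0P_0)\big|\le C\,r^{-1/2}U_0\,|X|^{k-1/2+\alpha}\le C\,\frac{U_0}{r}\,|X|^{k+\alpha},
\]
using $r\le |X|$. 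Adding this to the tangential pieces gives the claimed estimate for $\partial_rU=\nabla_xr\cdot\nabla_xU$ without ever needing $\partial_{x_n}V$ to vanish on $\mathcal{P}$. Your Steps~1 and~2 (applying Proposition~\ref{P:pointwise2} and the chain-rule computation of $\partial_r(U_0P_0)$) are fine and consistent with the paper; it is the final upgrading step that needs to follow the path above rather than a direct Schauder-boundary-regularity argument for the whole gradient.
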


\begin{proof}
By Lemma \ref{L:cone} above, we have that 
\[
U_i = \partial_{x_i}(U_0P_0) + O(\frac{U_0}{r} |X|^{k+\alpha})
\]
for $1\leq i \leq n-1$ and 
\[
U_n = \partial_{x_{n}}(U_0P_0) + O\Big(|X|^{k-1/2+\alpha}\Big)
\]
in the cone $\mathcal{C}_0 = \{|(x_{n-1},x_n)|>\text{max}(|x''|, |t|^{1/2})\}$. Then since $|\partial_{x_n}r| \leq r^{-1/2}U_0$, we get 
\[
\partial_rU = \partial_r(U_0P_0) + \frac{U_0}{r}O(|X|^{k+\alpha})
\]
in $\mathcal{C}_0$, and the conclusion follows by arguing as in the proof of Lemma \ref{L:cone}.
\end{proof}

At this point we prove Theorem \ref{T:main} in the case $k=0$. The proof follows the same basic strategy as the proof for the case $k \geq 1$, but we need to use regularized versions $\overline{r}$, $\overline{d}$, and $\overline{U}_0$ of $r$, $d$, and $U_0$ due to their lack of regularity. We begin with the Schauder estimate in the case $k=0$.

\begin{theorem}\label{T:schauder2}
Let $u\in C(\Psi_1)$ be a solution of $Hu = \frac{U_0}{r}f$ in $\Psi_1 \setminus \mathcal{P}$ with $|f| \leq r^{\alpha-1}$. Assume that $u$ is even in $x_n$, that it vanishes continuously on $\mathcal{P}$, $\|u\|_{L^\infty(\Psi_1)} \leq 1$ and $\Gamma \in C^{1+\alpha}$. Then
\[
\Big\|\frac{u}{U_0}\Big\|_{H^{\alpha}_{x'tr}(\Gamma \cap \Psi_{1/2})} \leq C,
\]
where $C=C(n,k,\alpha, ||u||_{L^{\infty}})$.
\end{theorem}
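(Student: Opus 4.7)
The plan is to adapt the argument of Proposition \ref{P:pointwise2} to the $k=0$ setting, working pointwise at each $Y \in \Gamma \cap \Psi_{1/2}$, which after translation we may take to be the origin. Since an approximating ``polynomial'' of parabolic degree $0$ is just a constant, the linear system for its coefficients is trivial, so the whole scheme reduces to producing a single constant $c$ such that $|u - cU_0| \leq C|X|^{1/2+\alpha}$ in a universal neighborhood of $0$. Dividing by $U_0 \sim r^{1/2}$ then gives $|u/U_0 - c| \leq C|X|^{\alpha}$, which is the required pointwise $H^{\alpha}_{x'tr}$ expansion at the origin; uniformity over $Y \in \Gamma \cap \Psi_{1/2}$ will follow because every constant produced in the iteration is universal.

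The main technical adjustment with respect to the $k\geq 1$ case is that under $\Gamma \in C^{1+\alpha}$ the signed distance $d$ is only $C^{1+\alpha}$, and neither $r$ nor $U_0 = \tfrac{1}{\sqrt{2}}\sqrt{d+r}$ carries enough regularity for the expansions in the proof of Proposition \ref{P:pointwise2} to apply verbatim. Following the strategy flagged in the theorem statement and the elliptic analogue in \cite{DSS14a}, I will replace $d, r, U_0$ with smooth regularizations $\bar d, \bar r, \bar U_0$ obtained by mollifying $d$ off $\Gamma$ at the appropriate scale, arranged so that $|\bar U_0 - U_0| \leq C|X|^{1/2+\alpha}$ and so that $|H\bar U_0|$ is controlled by $\delta = \|g\|_{C^{1+\alpha}}$ times an integrable weight in $r$, with $\delta$ as small as desired after an initial dilation. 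Crucially, as in Remark \ref{bdh}, $\bar U_0$ will still be comparable to an honest caloric function $H_0$ vanishing on $\mathcal{P}$, which keeps the boundary Harnack principle available at the end of the argument.

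With $\bar U_0$ in hand the improvement of flatness mirrors Lemma \ref{L:improflat}. Assuming inductively $\|u - P\bar U_0\|_{L^\infty(\Psi_\lambda)} \leq \lambda^{1/2+\alpha}$ for a constant $|P| \leq 1$, I set $u = P\bar U_0 + \lambda^{1/2+\alpha}\tilde u(\tilde X)$ with $\tilde X = (x/\lambda, t/\lambda^2)$, so that $\tilde u$ solves $H\tilde u = \tilde F$ on $\Psi_1 \setminus \tilde{\mathcal{P}}$ with $|\tilde F| \leq C\delta\, r^{-1/2}$, after absorbing $P\, H\bar U_0$ and the rescaled source $(U_0/r)f$ into the right-hand side. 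I split $\tilde u = \tilde u_0 + \tilde v$ with $H\tilde u_0 = 0$ and matching boundary values on $\partial_p\Psi_1 \cup \tilde{\mathcal{P}}$; the barriers $\pm C(-\bar U_0 + \bar U_0^2)$ from \eqref{e1} give $\|\tilde v\|_{L^\infty} \leq C\delta\, \tilde{\bar U}_0$, while as $\delta \to 0$ the rescaled slit $\tilde{\mathcal{P}}$ flattens to $\{x_{n-1}=0\}$ and $\tilde u_0$ converges (along subsequences, by uniform H\"older continuity) to a caloric function on the flat slit. Theorem \ref{T:straight} with $k=0$ then yields a constant $Q$ with $\|\tilde u_0 - Q\tilde U_0\|_{L^\infty(\Psi_\rho)} \leq C\rho^{3/2}$; picking $\rho$ and then $\delta$ sufficiently small and universal, this upgrades to $\|\tilde u - Q\tilde{\bar U}_0\|_{L^\infty(\Psi_\rho)} \leq \tfrac{1}{2}\rho^{1/2+\alpha}$, which unscaled is the next iteration step with $\bar P = P + \lambda^{\alpha}Q$ and $|\bar P - P| \leq C\lambda^{\alpha}$.

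Iterating from a small starting scale produces a Cauchy sequence of constants converging to the desired $c$, with $|u - c\bar U_0| \leq C|X|^{1/2+\alpha}$ in $\Psi_{1/2}$. Replacing $\bar U_0$ by $U_0$ costs at most another $C|X|^{1/2+\alpha}$, and boundary Harnack applied via the comparable caloric function $H_0$ of Remark \ref{bdh} upgrades the bound to $|u - cU_0| \leq CU_0|X|^{\alpha}$, equivalently $|u/U_0 - c| \leq C|X|^{\alpha}$, which is the claimed $H^{\alpha}_{x'tr}$ estimate. I expect the main obstacle to be constructing $\bar U_0$ with an $H$-bound sharp enough (as a function of $\delta$ and the weight $r^{-1/2}$) for the improvement step to close at the exact power $\lambda^{1/2+\alpha}$; once this regularization is arranged, the remainder is a faithful scale-by-scale transcription of the $k \geq 1$ argument, with the algebra trivialized by the fact that only constants appear as approximating polynomials.
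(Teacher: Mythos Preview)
Your overall strategy matches the paper's: regularize $U_0$ to $\overline{U}_0$, run an improvement-of-flatness iteration with constant approximants, and upgrade via boundary Harnack. However, there is a genuine computational error that breaks the barrier step.

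The bound $|\tilde F| \leq C\delta\, r^{-1/2}$ is incorrect. The source term is $Hu = \tfrac{U_0}{r}f$ with $|f| \leq r^{\alpha-1}$, so $|Hu| \leq C r^{-3/2+\alpha}$, and Lemma~\ref{L:properties} gives $|H\overline{U}_0| \leq C\delta\, r^{-3/2+\alpha}$ as well. After the rescaling $u = a\overline{U}_0 + \lambda^{1/2+\alpha}\tilde u(\tilde X)$ one obtains $|H\tilde u| \leq C\delta\, \tilde r^{-3/2+\alpha}$, which for $\alpha < 1/2$ is strictly more singular than $\tilde r^{-1}$ near the slit. The barrier you invoke, $-\overline{U}_0 + \overline{U}_0^{\,2}$, satisfies only $H\overline{v} \geq c r^{-1}$ (this is \eqref{e1}), so it cannot control a right-hand side of order $r^{-3/2+\alpha}$ when $\alpha$ is small; your estimate $\|\tilde v\|_{L^\infty} \leq C\delta\,\tilde{\overline{U}}_0$ fails and the iteration does not close.

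The fix, which the paper carries out in Lemma~\ref{L:impflat2}, is to use instead the barrier $V = \overline{U}_0 - \overline{U}_0^{\,1+2\alpha}$. A direct computation (as in Lemma~5.2 of \cite{DSS14a}) shows $|HV| \gtrsim r^{-3/2+\alpha}$, which exactly matches the singularity of $\tilde F$ and recovers $\|\tilde v\|_{L^\infty} \leq C\delta\,\tilde U_0$. This is precisely the ``main obstacle'' you flagged at the end: the weight is $r^{-3/2+\alpha}$, not $r^{-1/2}$, and this dictates both the required bound on $H\overline{U}_0$ in the regularization and the choice of barrier. Once you make this correction, the remainder of your sketch goes through as in the paper.
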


To prove the theorem, we will need a number of properties of the regularized functions $\overline{r}$ and $\overline{U}_0$. We state these in the following lemma, whose proof we will delay to the end.

\begin{lemma}\label{L:properties}
Let $\| \Gamma \|_{C^{1+\alpha}} \leq \delta$. Then there exist smooth functions $\overline{U}_0$ and $\overline{r}$ such that, for a universal constant $C$,
\begin{align*}
|\overline{r}-r|  &\leq C\delta r^{\alpha + 1},  &&|\overline{U}_0 - U_0| \leq C \delta U_0 r^{\alpha},\\
|\nabla \overline{r} - \nabla r | &\leq C \delta r^{\alpha},  && |\partial_{x_n} \overline{r} - \partial_{x_n} r |  \leq C \delta U_0 r^{-1/2+\alpha},\\
|H\overline{r} - \frac{1}{r} | &\leq C \delta r^{-1+\alpha},  && |H\overline{U}_0| \leq C \delta r^{-3/2+\alpha}.
\end{align*}
\end{lemma}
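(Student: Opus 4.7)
The plan is to regularize the signed distance $d$ by a variable-scale parabolic mollification and then to define $\overline r$ and $\overline U_0$ from $\overline d$ by the same formulas that define $r$ and $U_0$ from $d$. Concretely, fix an even parabolic mollifier $\eta\in C_c^\infty$ with $\int\eta=1$, choose a smooth function $\varepsilon(x',t)$ comparable to $r(x',t)$ (a Whitney-type regularization of the distance to $\Gamma$), and set
\[
\overline d(x',t)=\int d(x'-\varepsilon y',\,t-\varepsilon^{2} s)\,\eta(y',s)\,dy'\,ds,\qquad \overline r=\sqrt{x_n^{2}+\overline d^{\,2}},\qquad \overline U_0=\tfrac{1}{\sqrt{2}}\sqrt{\overline d+\overline r}.
\]
Because $\|d\|_{C^{1+\alpha}}\le C\delta$ in the parabolic sense and $|d_t|\le C\delta$ by Remark \ref{tbounds}, standard mollifier bounds (after routine but careful bookkeeping of the $(x',t)$-dependence of $\varepsilon$) yield
\[
|\overline d-d|\le C\delta r^{1+\alpha},\quad |\nabla_{x'}\overline d-\nabla_{x'} d|\le C\delta r^{\alpha},\quad |D^{2}_{x'}\overline d|\le C\delta r^{\alpha-1},\quad |\partial_t\overline d|\le C\delta,
\]
and, using the eikonal identity $|\nabla d|=1$, also $\bigl||\nabla_{x'}\overline d|^{2}-1\bigr|\le C\delta r^{\alpha}$.

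The first four pointwise bounds in the lemma then reduce to elementary algebra. For example, $|\overline r-r|=|\overline d^{\,2}-d^{2}|/(\overline r+r)\le C|\overline d-d|\le C\delta r^{1+\alpha}$; the estimate $|\nabla\overline r-\nabla r|$ follows by the quotient rule applied to $\overline r\nabla\overline r=x_n e_n+\overline d\nabla\overline d$; the bound $|\partial_{x_n}\overline r-\partial_{x_n} r|=|x_n|\,|r-\overline r|/(r\overline r)$ together with $|x_n|\le 2\sqrt{r}\,U_0$ gives the $x_n$-derivative estimate; and $|\overline U_0-U_0|$ is controlled using $\overline U_0^{2}-U_0^{2}=(\overline d+\overline r-d-r)/2$ and $\overline U_0+U_0\ge cU_0$.

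The heart of the argument is the two heat-operator estimates, where an algebraic cancellation is essential. A direct computation (exploiting that $\overline d$ does not depend on $x_n$) gives
\[
H\overline r=\frac{1}{\overline r}+\frac{\overline d\,H\overline d}{\overline r}+\frac{(|\nabla_{x'}\overline d|^{2}-1)\,x_n^{2}}{\overline r^{\,3}},
\]
which combined with $|H\overline d|\le C\delta r^{\alpha-1}$ and $|1/\overline r-1/r|\le C\delta r^{\alpha-1}$ immediately yields $|H\overline r-1/r|\le C\delta r^{\alpha-1}$. For $\overline U_0$, substitute this formula into the identity $H\overline U_0=(H\overline d+H\overline r)/(4\overline U_0)-|\nabla\overline U_0|^{2}/\overline U_0$, expand $|\nabla\overline U_0|^{2}/\overline U_0=1/(4\overline r\overline U_0)+(|\nabla_{x'}\overline d|^{2}-1)\overline U_0/(4\overline r^{\,2})$, and simplify using $\overline U_0^{2}=(\overline d+\overline r)/2$ and $x_n^{2}=(\overline r-\overline d)(\overline r+\overline d)$. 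After the cancellation of the dangerous $1/(4\overline r\overline U_0)$ terms, one obtains
\[
H\overline U_0=\frac{\overline U_0\,H\overline d}{2\overline r}+\frac{(|\nabla_{x'}\overline d|^{2}-1)(\overline r-2\overline d)\,\overline U_0}{4\overline r^{\,3}},
\]
and each of the two terms is bounded by $C\delta r^{\alpha-3/2}$ upon using $\overline U_0\le\sqrt{r}$ and $|\overline r-2\overline d|\le 3\overline r$.

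The main obstacle is arranging the variable-scale regularization so that the dependence of $\varepsilon$ on $(x',t)$ does not spoil the bounds on $\overline d$. Differentiating the integral above produces extra chain-rule contributions involving $\nabla\varepsilon$ and $\partial_t\varepsilon$; handling them requires choosing $\varepsilon$ with $|\nabla\varepsilon|\le C$ and $|\partial_t\varepsilon|\le Cr$, after which these contributions get absorbed into the claimed error terms. Once this bookkeeping is carried out, the rest is the direct algebraic manipulation above, driven entirely by the structural identity $Hr-1/r=(d/r)Hd$ that is inherited, up to the $|\nabla_{x'}\overline d|^{2}-1$ defect, by the regularized objects.
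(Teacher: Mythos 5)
Your regularization strategy---mollify $d$ at a scale comparable to $r$ and then define $\overline r=\sqrt{x_n^2+\overline d^{\,2}}$ and $\overline U_0=\tfrac{1}{\sqrt2}\sqrt{\overline d+\overline r}$---is the same as the paper's; the paper implements the variable-scale regularization via dyadic convolutions $d_{\lambda_l}$ glued by a partition of unity on the annuli $\{\lambda_l<r<4\lambda_l\}$, which is the discrete form of your Whitney-type $\varepsilon(x',t)\sim r$ mollifier. Your identity
\[
H\overline U_0=\frac{\overline U_0\,H\overline d}{2\overline r}+\frac{(|\nabla_{x'}\overline d|^2-1)(\overline r-2\overline d)\,\overline U_0}{4\overline r^{\,3}}
\]
is correct and is a clean packaging of the cancellation the paper exploits through the identity $(U_0)_\lambda\Delta(U_0)_\lambda+|\nabla(U_0)_\lambda|^2=\tfrac14\Delta(d_\lambda+r_\lambda)$.

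There is, however, a genuine gap in your bound for $|\overline U_0-U_0|$. From $\overline U_0^2-U_0^2=(\overline d+\overline r-d-r)/2$ and $\overline U_0+U_0\ge U_0$, you obtain only $|\overline U_0-U_0|\le C\delta r^{1+\alpha}/U_0$. This is what is claimed, namely $C\delta U_0 r^\alpha$, \emph{only} when $U_0\gtrsim\sqrt r$, i.e.\ when $d\gtrsim -r/2$. Near the underside of the slit ($x_n\to0$, $d<0$) one has $U_0\ll\sqrt r$, and then $r^{1+\alpha}/U_0\gg U_0 r^\alpha$, so your estimate is strictly weaker than the lemma requires. The paper repairs exactly this by splitting into the regions $\{d>-r/2\}$ and $\{d<-r/2\}$: in the first, $r+d\ge r/2$ and one writes $(U_0)_\lambda=U_0\bigl((r_\lambda+d_\lambda)/(r+d)\bigr)^{1/2}$; in the second, $r-d\ge 3r/2$ and one uses the alternative representation $(U_0)_\lambda=\tfrac{|x_n|}{\sqrt2}(r_\lambda-d_\lambda)^{-1/2}=U_0\bigl((r_\lambda-d_\lambda)/(r-d)\bigr)^{-1/2}$. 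Equivalently, note that $\overline U_0^2/U_0^2=(\overline r+\overline d)/(r+d)=(r-d)/(\overline r-\overline d)$, and since at every point at least one of $r+d,\ r-d$ is $\ge r$, one of these two quotients has a denominator bounded below by $cr$, yielding $\overline U_0/U_0=1+O(\delta r^\alpha)$. You need this case distinction (or the equivalent two-representation trick); the one-line argument via $\overline U_0+U_0\ge cU_0$ does not close.
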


Now we prove the improvement of flatness lemma analogous to that in the case $k \geq 1$.

\begin{lemma}\label{L:impflat2}
Suppose $| Hu | \leq \delta r^{-3/2+\alpha}$ in $\Psi_1 \setminus \mathcal{P}$, where $u\in C(\Psi_1)$, is even in $x_n$, and vanishes continuously on $\mathcal{P}$ with $\| \Gamma \|_{C^{1+\alpha}} \leq \delta$. If there exists a constant $a$ with $|a| \leq 1$ such that 
\[
\|u-aU_0\|_{L^\infty(\Psi_\lambda)}\leq \lambda^{1/2+\alpha},
\]
 for some $\lambda >0$, then there exists a constant $b$ and $\rho>0$ such that $|a-b| \leq C \lambda^\alpha$ and
 \[
 \| u - bU_0 \|_{L^\infty(\Psi_{\rho\lambda})}\leq (\rho\lambda)^{1/2+\alpha},
 \]
 for sufficiently small $\delta$. 
\end{lemma}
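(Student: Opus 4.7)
The plan is to mirror the improvement-of-flatness template from Lemma \ref{L:improflat}, but with three structural modifications forced by the $k=0$ setting: the source term is now $\delta r^{-3/2+\alpha}$ rather than $\delta r^{-1/2}$; the geometry $\Gamma$ is only $C^{1+\alpha}$, so $U_0$ itself is not smooth; and the target polynomial is a single constant. The regularized substitutes $\overline{r},\overline{U}_0$ from Lemma \ref{L:properties} will take over the role that $U_0$ played in the $k\geq 1$ case, with their imperfections ($H\overline{U}_0\neq 0$, $\overline{U}_0\neq U_0$) controlled by $\delta$.

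First I would set $w:=u-a\overline{U}_0$ and rescale: $\tilde w(\tilde X):=\lambda^{-1/2-\alpha}w(\lambda\tilde X)$, where $\tilde X=(x/\lambda,t/\lambda^2)$. From the hypothesis and $|\overline{U}_0-U_0|\leq C\delta U_0 r^\alpha$, one gets $\|\tilde w\|_{L^\infty(\Psi_1)}\leq 1+C\delta$. From $|Hu|\leq\delta r^{-3/2+\alpha}$ together with $|H\overline{U}_0|\leq C\delta r^{-3/2+\alpha}$ and the scaling of $H$, one obtains $|H\tilde w|\leq C\delta\,\tilde r^{-3/2+\alpha}$ in $\Psi_1\setminus\tilde{\mathcal P}$. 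I would then split $\tilde w=\tilde w_0+\tilde v$, with $H\tilde w_0=0$ in $\Psi_1\setminus\tilde{\mathcal P}$ and $\tilde w_0=\tilde w$ on $\partial_p\Psi_1\cup\tilde{\mathcal P}$, while $\tilde v$ vanishes on the same boundary and carries the source.

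The piece $\tilde v$ would be estimated by $\|\tilde v\|_{L^\infty(\Psi_1)}\leq C\delta$ using barriers assembled from $\overline{U}_0$ and $\overline{r}$ whose heat operator dominates $\delta\,\tilde r^{-3/2+\alpha}$; here the precise identities $|H\overline{r}-1/r|\leq C\delta r^{-1+\alpha}$ and $|H\overline{U}_0|\leq C\delta r^{-3/2+\alpha}$ from Lemma \ref{L:properties} are used to calibrate the barrier. For $\tilde w_0$, as $\delta\to 0$ the slit $\tilde{\mathcal P}$ flattens to $\{x_{n-1}\leq 0,\,x_n=0\}$; uniform H\"older continuity of $\tilde w_0$ up to $\tilde{\mathcal P}$ (via the Wiener-type criterion invoked in the proof of Theorem \ref{T:straight}) yields compactness, and the limit is a caloric function on the flat slit complement. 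Theorem \ref{T:straight} applied with $k=0$ then produces a constant $c$, with $|c|\leq C$, such that $\|\tilde w_0-c\,\overline{U}_0\|_{L^\infty(\Psi_\rho)}\leq C\rho^{3/2}+o_\delta(1)$.

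Combining the two pieces gives $\|\tilde w-c\,\overline{U}_0\|_{L^\infty(\Psi_\rho)}\leq C\rho^{3/2}+C\delta$. I would fix a universal $\rho$ so that $C\rho^{3/2}\leq \tfrac14\rho^{1/2+\alpha}$ (possible since $3/2>1/2+\alpha$), then choose $\delta$ so that $C\delta\leq \tfrac14\rho^{1/2+\alpha}$, obtaining $\tfrac12\rho^{1/2+\alpha}$ on the right. Scaling back up by $\lambda^{1/2+\alpha}$ and replacing $\overline{U}_0$ by $U_0$ at the cost $|\overline{U}_0-U_0|\leq C\delta U_0 r^\alpha\leq C\delta(\lambda\rho)^{1/2+\alpha}$, another small choice of $\delta$ absorbs this into the remaining $\tfrac12$, leaving
\[
\|u-(a+c\lambda^\alpha)U_0\|_{L^\infty(\Psi_{\rho\lambda})}\leq (\rho\lambda)^{1/2+\alpha},
\]
so that $b:=a+c\lambda^\alpha$ satisfies $|a-b|\leq C\lambda^\alpha$. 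The main obstacle I anticipate is the barrier step: unlike the $k\geq 1$ case where $-U_0+U_0^2$ gives $H\geq cr^{-1}$, here dominating $\delta r^{-3/2+\alpha}$ requires carefully leveraging the near-caloricity of $\overline{U}_0$ from Lemma \ref{L:properties} and keeping the resulting $\delta$-small $L^\infty$ bound sharp enough to feed into the iteration.
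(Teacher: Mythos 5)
Your proposal follows essentially the same path as the paper's proof: replace $U_0$ by the regularized $\overline{U}_0$ from Lemma \ref{L:properties}, rescale, split into a caloric piece handled by compactness plus Theorem \ref{T:straight}, and a barrier-controlled piece carrying the $\delta r^{-3/2+\alpha}$ source, then translate back and unfold $\overline{U}_0\to U_0$. The paper makes the barrier explicit as a multiple of $V=\tilde U_0-\tilde U_0^{1+2\alpha}$, which yields the slightly sharper weighted bound $\|\tilde v\|_{L^\infty}\leq C\delta\tilde U_0$ (useful later in the iteration), but for this lemma your unweighted $C\delta$ bound and your bookkeeping are adequate.
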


\begin{proof}

By Lemma \ref{L:properties} we can assume $\|u-a\overline{U}_0\|_{L^\infty(\Psi_\lambda)}\leq 2 \lambda^{1/2+\alpha}$. Let $u = a\overline{U}_0 + 2\lambda^{1/2+\alpha}\tilde{u}(\tilde{X})$ where $\tilde{X}=(x/\lambda, t / \lambda^2)$. Then again by Lemma \ref{L:properties}, using the bound for $H\overline{U}_0$, we find that in $\Psi_1$ $ |\tilde{u}| \leq 1$ and $|H\tilde{u}| \leq C \delta r^{-3/2+\alpha}$. We split $\tilde{u}$ as $\tilde{u}=\tilde{u}_1 + \tilde{u}_2$ where 
\[
H\tilde{u}_1 = H\tilde{u} \text{  in  } \Psi_1 \setminus \mathcal{P}, \, \tilde{u}_1 = 0 \text{  on  } \partial_{p}\Psi_1 \cup \tilde{\mathcal{P}}
\] and  
\[
H\tilde{u}_2 = 0 \text{  in  } \Psi_1 \setminus \mathcal{P}, \, \tilde{u}_2 = \tilde{u} \text{  on  } \partial_{p} \Psi_1 \cup \tilde{\mathcal{P}}.
\]

Now $\| \tilde{u}_1 \|_{L^\infty} \leq C \delta \tilde{U}_0$ with $C$ not depending on $\delta$, which can be shown by using a multiple of $V=\tilde{U}_0-\tilde{U}_0^{1+2\alpha}$ as a barrier. The fact that $V$ is a barrier follows from Lemma \ref{L:properties} and calculations similar to Lemmma 5.2 in \cite{DSS14a}. Thus $\tilde{u}_1 \to 0$ uniformly as $\delta \to 0$. For $\tilde{u}_2$, by compactness  we can for $\delta$ universally and sufficiently small approximate in $\Psi_{1/2}$ by a solution of the problem in the case where $\Gamma$ is straight, and therefore by Theorem \ref{T:straight} (which we proved for $k \geq 0$), 
\[
\| \tilde{u}_2 - c\tilde{U}_0\|_{L^\infty(\Psi_\rho)} \leq C \rho^{3/2}
\]
 for a constant $c$ with $|c| \leq C$. 

As a consequence, we find that $$\| \tilde{u} - c\tilde{U}_0\|_{L^\infty(\Psi_\rho)} \leq \frac{1}{4}\rho^{1/2+\alpha}$$ and subsequently 
\[
\| u - a\overline{U}_0 - 2c\lambda^\alpha\overline{U}_0\|_{L^\infty(\Psi_{\rho\lambda})} \leq \frac{1}{2}(\rho\lambda)^{1/2+\alpha}.
\]
Applying Lemma \ref{L:properties} once more, we find 
\[
\| u  - (a+2c\lambda^\alpha) U_0\|_{L^\infty(\Psi_{\rho\lambda})} \leq (\rho\lambda)^{1/2+\alpha}.
\]

\end{proof}

Theorem 4 now follows from Lemma 7 as in the case $k \geq 1$ by iterating the Lemma above and by using boundary Harnack. In this regard, we  would like to  mention that, similarly  to the case $k\geq 1$, one can apply boundary Harnack with $U_0$. This follows from an elementary computation similar to Lemma 5.2 in \cite{DSS14a}, which shows we have that $V_1= U_0 - U_0^{1+ 2 \alpha}$ is a supersolution and $V_2= U_0+ U_0^{1+ 2\alpha}$ is a subsolution, both of which vanish on $\mathcal{P}$ and are comparable to $U_0$. Therefore, by the Perron process, there exists  a caloric function $H$ which vanishes on $\mathcal{P}$ and is comparable to $U_0$. 

We now state the $k=0$ version of Proposition \ref{P:pointwise}, from which the $k=0$ case of Theorem \ref{T:main} follows, exactly as in the $k \geq 1$ case.

\begin{proposition}\label{P:k0}
Let $0<U\in C(\Psi_1)$ be a solution of $HU=0$ in $\Psi_1 \setminus \mathcal{P}$, such that $U$ is even in $x_n$, and it vanishes continuously on $\mathcal{P}$ with $U(\underline{A_{3/4}})=1$. Let $u\in C(\Psi_1)$ be even in $x_n$, $0$ on $\mathcal{P}$ such that $\|u\|_{L^\infty(\Psi_1)} \leq 1$, and 
\[
Hu(X) = a\frac{U_0}{r} + F(X) \text{ in } \Psi_1 \setminus \mathcal{P}
\]
with $|F(X)| \leq r^{-1/2}|X|^{\alpha}$ and $|a| \leq 1$. Then there is a polynomial $P(x',r)$ of parabolic degree $1$ such that $\|P\| \leq C$ and 
\[
\Big|\frac{u}{U}-P\Big| \leq C|X|^{1+\alpha},
\]
where $C=C(n,k,\alpha)$.
\end{proposition}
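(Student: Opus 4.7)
The plan is to mirror the strategy used in the proof of Proposition \ref{P:pointwise}, adapting it to the $k=0$ setting by means of the regularized objects $\overline{U}_0, \overline{r}$ provided by Lemma \ref{L:properties} (exactly as Theorem \ref{T:schauder2} replaces Proposition \ref{P:pointwise2} at $k=0$). After an initial dilation we may assume $\|g\|_{C^{1+\alpha}} \leq \delta$, $|a| \leq \delta$ and $|F(X)| \leq \delta r^{-1/2}|X|^\alpha$, with $\delta$ universal and as small as needed.

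First I would apply Theorem \ref{T:schauder2} to $U$ itself. Since $HU = 0$, this yields an expansion
\[
U(X) = \overline{U}_0(X)\bigl(c_0 + \delta Q_0(X) + \delta\, O(|X|^\alpha)\bigr),
\]
for some constant $c_0$ and some $H^\alpha_{x'tr}$ function $Q_0$ with $Q_0(0)=0$ and $\|Q_0\|\leq 1$. The analogue of Lemma \ref{L:P0} should be proved next: exactly as in that lemma, one uses Lemma 11.8 of \cite{DGPT} to get $U \geq c_0|x_n|$ in a thin slab, then shifts the center to a point $(x_*,t_*)$ at distance proportional to $r$ from $\Gamma$ and applies interior Harnack to conclude $U \geq Cr$ in $\Psi_{1/4}^+$; because $U_0 \sim r^{1/2}$, this is incompatible with $c_0 = 0$. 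Rescaling $U$ by $1/c_0$ (universally controlled since $U(\underline{A_{3/4}})=1$) we may assume $c_0 = 1$, so $U$ is now comparable to $\overline{U}_0$.

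Next I would define approximating polynomials of parabolic degree $1$, i.e.\ $P = p_0 + p_r r + \sum_{i=1}^{n-1} p_i x_i'$. Computing $H(\overline{U}_0 \cdot x_i')$ and $H(\overline{U}_0 \cdot r)$ using Lemma \ref{L:properties} and the Taylor expansions $d_i = \delta_{n-1}^i + \ldots$, $d = x_{n-1} + \ldots$, exactly as in the proof of Proposition \ref{P:pointwise2}, one finds
\[
H(\overline{U}_0 P) = \frac{U_0}{r}\bigl( A_0 + \delta\, O(|X|^\alpha)\bigr),
\]
where $A_0 = 2 p_r + p_{n-1}$ (plus $\delta$-small perturbations coming from $c^{\mu m}_{\sigma l}$-type terms). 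The coefficient $A_0$ is what must match the source $R = a$; thus $P$ is called approximating if $2p_r + p_{n-1} = a$, which leaves $p_0$ and the remaining $p_i$'s free. Because $U = \overline{U}_0(1 + O(\delta))$, the same compatibility (up to $O(\delta)$ corrections) determines an approximating polynomial for the pair $(u,U)$.

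The core step is then an improvement-of-flatness lemma: if $P$ is approximating with $\|P\|\leq 1$ and $\|u - UP\|_{L^\infty(\Psi_\lambda)} \leq \lambda^{3/2+\alpha}$, then there exists an approximating $\overline{P}$ with $\|\overline{P}-P\|\leq C\lambda^{1+\alpha}$ and $\|u - U\overline{P}\|_{L^\infty(\Psi_{\rho\lambda})} \leq (\rho\lambda)^{3/2+\alpha}$. To prove it I rescale $u - UP = \lambda^{3/2+\alpha}\tilde{u}(\tilde X)$, obtain $|H\tilde u| \leq C\delta r^{-3/2+\alpha}$ and $|\tilde u|\leq 1$, and split $\tilde u = \tilde u_0 + \tilde v$ as in Lemma \ref{L:impflat2}. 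The barrier $\tilde U_0 - \tilde U_0^{1+2\alpha}$ (whose properties follow from Lemma \ref{L:properties}) gives $|\tilde v| \leq C\delta \tilde U_0$. Compactness as $\delta \to 0$ together with Theorem \ref{T:straight} (valid for $k \geq 0$) approximates $\tilde u_0$ in $\Psi_\rho$ by $\tilde U Q$ for a polynomial $Q$ of parabolic degree $2$, whose coefficients automatically satisfy the flat-case linear system. Finally one corrects $P + \lambda^{1+\alpha}Q(\tilde X)$ to an approximating $\overline{P}$ for the true (non-flat) $\Gamma$ by subtracting a polynomial whose coefficients solve the perturbed system; the difference is $O(\delta)$, so $\rho$ and $\delta$ can be chosen (in that order) to close the iteration.

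Iterating and using the boundary Harnack comparison $U \sim \overline{U}_0 \sim U_0$ recovered in Theorem \ref{T:schauder2} (to upgrade $|u - UP|\leq C|X|^{1+\alpha}\cdot|X|^{1/2}$ into $|u - UP|\leq CU|X|^{1+\alpha}$) then yields $|u/U - P| \leq C|X|^{1+\alpha}$ as required, with $P$ the limiting approximating polynomial. The main obstacle will be the first two paragraphs combined: the use of $\overline{U}_0$ in place of $U_0$ introduces regularization errors in every $H(\overline{U}_0 \cdot \text{monomial})$ computation, and one must verify that all such errors are absorbed by the $O(|X|^\alpha)$ or $\delta r^{-3/2+\alpha}$ budgets — in particular the barrier construction for $\tilde v$ requires Lemma \ref{L:properties}'s sharp estimate $|H\overline{U}_0|\leq C\delta r^{-3/2+\alpha}$, and the nondegeneracy of the constant term of the expansion of $U$ must be established before any iteration is started.
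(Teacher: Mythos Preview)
Your overall architecture (initial dilation, nondegeneracy of the constant term, approximating polynomials, improvement-of-flatness, iteration) matches the paper's, but two genuine gaps prevent the argument from going through as written.

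First, you define the degree-one polynomial using the \emph{unregularized} $r$, writing $P = p_0 + p_r r + \sum_i p_i x_i'$, and then propose to compute $H(\overline{U}_0\cdot r)$ ``exactly as in the proof of Proposition \ref{P:pointwise2}''. That computation in Proposition \ref{P:pointwise2} uses $\Delta d$ and the Taylor expansion of $d$ to second order, both of which are available there because $k\ge 1$ forces $\Gamma\in H^{2+\alpha}$. Here $\Gamma$ is only $C^{1+\alpha}$, so $d$ and $r$ are merely $C^{1+\alpha}$; $\Delta r$ does not exist pointwise and the product-rule computation breaks down. The paper avoids this by taking $P=P(x',\overline{r})$, i.e.\ building the regularized $\overline{r}$ (from Lemma \ref{L:properties}) into the approximating polynomial itself.

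Second, and more seriously, you propose to compute $H(\overline{U}_0 P)$ and then pass to $H(UP)$ using only the zeroth-order relation $U=\overline{U}_0(1+O(\delta))$. This does not work: since $HU=0$ one has $H(UP)=U\cdot HP + 2\nabla U\cdot\nabla P$, so pointwise control of $\nabla U$ --- specifically of $U_i$ for $i\le n-1$ and of $\nabla\overline{r}\cdot\nabla U$ --- is unavoidable. The paper supplies this via Lemma \ref{L:grad}, which yields $\nabla_{x'}U=\nabla_{x'}U_0+O(\delta\,\tfrac{U_0}{r}|X|^\alpha)$ and $\partial_{x_n}U=\partial_{x_n}U_0+O(\delta\,r^{-1/2}|X|^\alpha)$; combined with the bounds on $H\overline{r}$ and $\nabla\overline{r}$ from Lemma \ref{L:properties} this gives $H(UP)=\tfrac{U_0}{r}\bigl[p_{n-1}+2p_r+O(\delta|X|^\alpha)\bigr]$ and hence the correct notion of ``approximating''. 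Without invoking Lemma \ref{L:grad} (or proving an equivalent gradient estimate), the approximating-polynomial condition for the pair $(u,U)$ is never actually established, so the improvement-of-flatness lemma has no starting point. (Minor: in your compactness step the polynomial $Q$ should have parabolic degree $1$, not $2$.)
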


Take 
\[
P(x',\overline{r})= a_0 + \sum_{i=1}^{n-1}a_ix_i + a_n\overline{r}.
\]
We compute that 
\begin{equation}\label{for}
H(UP) = a_nUH\overline{r} + 2\sum_{i=1}^{n-1} a_iU_i + 2a_n\nabla\overline{r}\nabla U.
\end{equation}

By Theorem \ref{T:schauder2}, $|u-aU_0| \leq C|X|^\alpha U_0$ for some constant $a$ with $|a| \leq C$. 

\begin{lemma}\label{L:grad}
Assume additionally that $u$ is caloric, then for a.e. $X \in \Psi_{1/2}$, 
\[
|\nabla u - \nabla (aU_0)| \leq C|X|^\alpha r^{-1/2} \text{  and  } |\nabla_{x'} u - \nabla_{x'} (aU_0)| \leq C|X|^\alpha\frac{U_0}{r}.
\]
\end{lemma}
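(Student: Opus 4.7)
Set $w:=u-aU_0$. The plan is to derive both estimates from parabolic Schauder theory applied at the natural scale $r_0:=r(X_0)$, combined with the size bound $|w(X)|\le C|X|^{\alpha}U_0(X)$ from Theorem \ref{T:schauder2} and the fact that $w$ vanishes on $\mathcal{P}$. Since $u$ is caloric, $Hw=-aHU_0$, and replacing $U_0$ by its smooth approximation $\overline{U}_0$ from Lemma \ref{L:properties} introduces errors of size $C\delta U_0 r^{\alpha}$ in value and $C\delta r^{-3/2+\alpha}$ in the heat operator; thus for the purposes of local Schauder theory, $w$ behaves as a function with $|Hw|\le Cr^{-3/2+\alpha}$.

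For the first bound, fix $X_0\in \Psi_{1/2}$ with $r_0>0$. The parabolic cylinder of size $r_0/2$ around $X_0$ stays at distance at least $r_0/2$ from $\Gamma$, so $U_0\le Cr^{1/2}\le Cr_0^{1/2}$ there, giving $\|w\|_{L^{\infty}}\le C|X_0|^{\alpha}r_0^{1/2}$ and $\|Hw\|_{L^{\infty}}\le Cr_0^{-3/2+\alpha}$ on this cylinder. Standard interior parabolic gradient estimates then give
\[
|\nabla w(X_0)|\le \frac{C}{r_0}\|w\|_{L^{\infty}}+Cr_0\|Hw\|_{L^{\infty}}\le C|X_0|^{\alpha}r_0^{-1/2},
\]
using $r_0\le|X_0|$ to absorb the second term.

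For the sharper bound on $\nabla_{x'}w$, observe that $\nabla_{x'}w=0$ on the interior of $\mathcal{P}$, since $w$ vanishes there and the $x'$-directions are tangent to the slit. When $U_0(X_0)/r_0\gtrsim r_0^{-1/2}$ — which holds whenever $d(X_0)\ge 0$, and also for $d(X_0)<0$ with $x_n(X_0)$ comparable to $r_0$ — the first estimate already gives the desired bound. The remaining case is $d(X_0)<0$ with $|d(X_0)|\ge r_0/2$ and $x_n(X_0)\ll r_0$; here the ball of radius $r_0/4$ around $X_0$ lies entirely on the negative side, so its intersection with $\{x_n=0\}$ is contained in $\mathcal{P}$. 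Applying $C^{2,\alpha}$ boundary parabolic Schauder estimates to $w$ in this half-cylinder with zero boundary data on $\{x_n=0\}$ yields $\|\partial_{x_n}\nabla_{x'}w\|_{L^{\infty}}\le C|X_0|^{\alpha}r_0^{-3/2}$ on a smaller concentric cylinder. Since $\nabla_{x'}w$ vanishes on $\{x_n=0\}$, integration in $x_n$ gives
\[
|\nabla_{x'}w(X_0)|\le x_n(X_0)\cdot C|X_0|^{\alpha}r_0^{-3/2},
\]
and the elementary identity $x_n^2=(r-d)(r+d)\le 2r(r+d)=4rU_0^2$ produces $x_n/r^{3/2}\le 2U_0/r$, which completes the proof.

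The main technical subtlety will be in executing the boundary Schauder step cleanly near $\mathcal{P}$ while controlling the non-smoothness of $U_0$ through the regularization $\overline{U}_0$ of Lemma \ref{L:properties}. The case split for the $\nabla_{x'}w$ estimate is dictated by the fact that $U_0/r$ is strictly smaller than $r^{-1/2}$ only near $\mathcal{P}$ below the slit, which is exactly where the vanishing of $\nabla_{x'}w$ on $\mathcal{P}$ supplies the additional factor of $x_n$ needed for the sharper bound.
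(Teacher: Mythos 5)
Your overall strategy---derive both estimates from localized Schauder theory at scale $r_0$, using the size bound $|u-aU_0|\le C|X|^\alpha U_0$, the vanishing of $w$ on $\mathcal{P}$, and the regularization $\overline{U}_0$ of Lemma \ref{L:properties}---is sound for the first estimate, and the scaling and elementary algebra ($x_n^2=(r-d)(r+d)\le 4rU_0^2$) are correct. But the route you take differs from the paper's in an important way, and that difference is the source of a genuine gap in your second estimate.

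For the bound on $\nabla_{x'}w$ in the region below the slit, you invoke a $C^{2,\alpha}$ boundary Schauder estimate for $\overline{w}=u-a\overline{U}_0$ in a half-cylinder with flat base in $\mathcal{P}$. That step requires a quantitative H\"older modulus for the right-hand side $H\overline{w}=-aH\overline{U}_0$ at scale $r_0$, i.e.\ something like $[H\overline{U}_0]_{\alpha}\le Cr_0^{-3/2}$ on the half-cylinder. Lemma \ref{L:properties} only gives the $L^\infty$ bound $|H\overline{U}_0|\le C\delta r^{-3/2+\alpha}$; it says nothing about the modulus of continuity of $H\overline{U}_0$. Without that, neither $C^{2,\alpha}$ Schauder nor $W^{2,1}_p$ theory plus Sobolev embedding gives a pointwise second-derivative bound of the size you claim. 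Weaker substitutes fail too: an interior gradient estimate at scale $x_n(X_0)$ around $X_0$ gives $|\nabla_{x'}\overline{w}(X_0)|\le C|X_0|^\alpha r_0^{-1/2}$ --- just the first estimate again --- and does not produce the extra factor $x_n/r_0\sim U_0/\sqrt{r_0}$. This is exactly where the paper chooses a different device: it compares $u$ to the \emph{flat} half-space branch $U_0^*=\tfrac{1}{\sqrt 2}\sqrt{x_{n-1}+\sqrt{x_{n-1}^2+x_n^2}}$ associated with the tangent plane at the foot point $\pi(X_0)$. Since $U_0^*$ is caloric, $u-a_{\pi(X_0)}U_0^*$ is caloric (no rough right-hand side), it approximately vanishes on a smooth slit $\mathcal{Q}$ close to $\mathcal{P}$, and---crucially---$U_0^*$ agrees with $U_0$ \emph{together with its gradient} at the single point $X_0$. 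With this choice the Schauder step is applied to a caloric function, and the conclusion is transferred back to $\nabla U_0$ without any regularization error.

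A secondary point: your case-split threshold ``$|d(X_0)|\ge r_0/2$ and $x_n(X_0)\ll r_0$'' does not actually guarantee that the base of a half-cylinder of radius comparable to $r_0$ stays inside $\mathcal{P}$. Since $|\nabla_{x'}d|\approx 1$, the signed distance can change by $\rho$ across a cylinder of spatial radius $\rho$; you therefore need $\rho<|d(X_0)|$, and to reach $X_0$ by integrating in $x_n$ from $x_n=0$ you also need $\rho\ge x_n(X_0)$. This forces $x_n(X_0)<|d(X_0)|$, i.e.\ $|d(X_0)|> r_0/\sqrt{2}$, not $r_0/2$. The complementary regime $d(X_0)\ge -(1-\varepsilon)r_0$ gives $U_0\ge c(\varepsilon)r_0^{1/2}$ so the first estimate suffices there; this fixes the split but you should make the threshold consistent.

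To close the argument along your own lines you would need to strengthen Lemma \ref{L:properties} with a scaled $C^\alpha$ bound on $H\overline{U}_0$ (which is plausible from the mollification construction but is not stated in the paper). Alternatively, adopt the paper's route: fix $X_0$, project to $\pi(X_0)\in\Gamma$, use the flat caloric $U_0^*$ of the tangent plane and the local constant $a_{\pi(X_0)}$ from Theorem \ref{T:schauder2}, obtain the two gradient bounds for the caloric function $u-a_{\pi(X_0)}U_0^*$ near a smooth slit $\mathcal Q$, identify $\nabla U_0^*(X_0)=\nabla U_0(X_0)$, and finally replace $a_{\pi(X_0)}$ by $a$ using $|a_{\pi(X)}-a|\le C|X|^\alpha$.
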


We delay the proof of Lemma \ref{L:grad} to the end of the paper. However, with this result in hand, we can suppose after multiplying by a constant and dilating that 
\begin{equation}\label{e110}
U=U_0(1+O(\delta |X|^\alpha))
\end{equation}
and 
\begin{equation}\label{e111}
\nabla_{x'} U = \nabla_{x'}U_0 + O(\delta \frac{U_0}{r}|X|^\alpha), \; \partial_{x_n}U = \partial_{x_n} U_0 + O(\delta r^{-1/2}|X|^\alpha).
\end{equation}
Now given Lemma \ref{L:grad} (applied to $U$),  \eqref{for} , \eqref{e110}, \eqref{e111} and the estimates in Lemma \ref{L:properties}, by calculations identical to ones following (4.4) in \cite{DSS14a},  we  obtain that 
\[
H(UP) = \frac{U_0}{r}[a_{n-1} + 2a_n + O(\delta|X|^\alpha)].
\]
We define $P(x',\overline{r})$ to be an approximating polynomial for $u/U$ at the origin if $a_{n-1}+2a_n = a$. With this definition and the following lemma, whose proof is identical to that of Lemma \ref{L:improflat} for the case $k \geq 1$, Proposition 3 follows.

\begin{lemma}\label{L:existappr}
There exist universal constants $C, \rho>0$ such that if $P$ is an approximating polynomial for $u/U$ in $\Psi_\lambda \setminus \mathcal{P}$ with $\|P\| \leq 1$ and
\[
\|u-UP\|_{L^\infty(\Psi_\lambda \setminus \mathcal{P})} \leq \lambda^{3/2+\alpha},
\]
then there exists an approximating polynomial $\overline{P}$ for $u/U$ in $\Psi_{\rho\lambda} \setminus \mathcal{P}$ with 
\[
\|u-U\overline{P}\|_{L^\infty(\Psi_{\rho\lambda} \setminus \mathcal{P})} \leq (\rho\lambda)^{3/2+\alpha} \ \ \ \text{ and } \ \ \ \ 
\|\overline{P}-P\|_{L^\infty(\Psi_\lambda)} \leq C\lambda^{1+\alpha}.
\]
\end{lemma}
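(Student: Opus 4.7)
The plan is to follow the same dyadic iteration scheme as in Lemma \ref{L:approx}, with polynomial degrees shifted down by one to match the $k=0$ case, and with $r$ replaced by its regularization $\overline{r}$ wherever the calculation requires smoothness. First I would rescale by setting $u = UP + \lambda^{3/2+\alpha}\tilde{u}(\tilde X)$ with $\tilde{X} = (x/\lambda, t/\lambda^2)$, and let $\tilde{\Gamma}, \tilde{\mathcal P}, \tilde{U}, \tilde{U}_0$ denote the corresponding rescalings. By hypothesis $|\tilde{u}| \le 1$ in $\Psi_1$. Since $P$ is approximating, the leading contribution $H(UP) = \frac{U_0}{r}[a_{n-1} + 2a_n + O(\delta |X|^\alpha)] = \frac{U_0}{r}[a + O(\delta|X|^\alpha)]$ exactly cancels the $a \frac{U_0}{r}$ term in $Hu$, so after rescaling $|H\tilde{u}| \le C\delta\, \tilde{r}^{-3/2+\alpha}$ in $\Psi_1 \setminus \tilde{\mathcal P}$.

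Next, I would split $\tilde{u} = \tilde{u}_0 + \tilde{v}$ with $H\tilde{u}_0 = 0$ in $\Psi_1 \setminus \tilde{\mathcal P}$ and $\tilde{u}_0 = \tilde{u}$ on $\partial_p \Psi_1 \cup \tilde{\mathcal P}$, while $\tilde{v}$ picks up the inhomogeneity with zero boundary data. Using multiples of the barrier $V = \tilde{U}_0 - \tilde{U}_0^{1+2\alpha}$ as in Lemma \ref{L:impflat2} (its supersolution property following from Lemma \ref{L:properties}) I obtain $\|\tilde{v}\|_{L^\infty(\Psi_1)} \le C\delta\, \tilde{U}_0$. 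For $\tilde{u}_0$, standard H\"older estimates give equicontinuity in $\Psi_{1/2}$, so a compactness argument as $\delta \to 0$ lets me approximate $\tilde{u}_0$ by a solution in the flat slit case $\tilde{\Gamma} = \{x_{n-1} = 0\}$. Applying Theorem \ref{T:straight} with $k=1$ and using that $\tilde{U} \to \tilde{U}_0$ uniformly, I find a polynomial $Q(x', \overline{r})$ of parabolic degree $1$ with $\|Q\|\le C$ such that $\|\tilde{u}_0 - \tilde{U} Q\|_{L^\infty(\Psi_\rho)} \le C\rho^{5/2}$. Combining with the estimate on $\tilde{v}$ and choosing first $\rho$ and then $\delta$ sufficiently small and universal gives
\[
\|\tilde{u} - \tilde{U} Q\|_{L^\infty(\Psi_\rho)} \le C\rho^{5/2} + C\delta \le \tfrac{1}{2} \rho^{3/2+\alpha}.
\]

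Rescaling back yields $\|u - U(P + \lambda^{1+\alpha} Q(\tilde{X}))\|_{L^\infty(\Psi_{\rho\lambda})} \le \tfrac{1}{2}(\rho\lambda)^{3/2+\alpha}$, but the polynomial $P + \lambda^{1+\alpha} Q(\tilde X)$ need not be approximating. I would correct this by choosing $\overline{Q}$ whose coefficients solve the linear system associated to the inhomogeneity $R=0$ but with rescaled lower-order coefficients $\overline{c}^{\mu m}_{\sigma l} = \lambda^{|\sigma|+l+1-|\mu|-m} c^{\mu m}_{\sigma l}$; since $|\overline{c}^{\mu m}_{\sigma l}| \le C\delta$, subtracting the two linear systems gives $\|\overline{Q} - Q\| \le C\delta$. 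Taking $\delta$ slightly smaller and setting $\overline{P} = P + \lambda^{1+\alpha}\overline{Q}(\tilde{X})$ then absorbs this discrepancy and yields the two required bounds $\|u - U\overline{P}\|_{L^\infty(\Psi_{\rho\lambda})} \le (\rho\lambda)^{3/2+\alpha}$ and $\|\overline{P} - P\|_{L^\infty(\Psi_\lambda)} \le C\lambda^{1+\alpha}$.

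The main obstacle, as in the higher-$k$ case, is handling the non-flatness of $\Gamma$ and executing the compactness reduction to the straight-slit model in Theorem \ref{T:straight}; the specific subtlety for $k=0$ is that the approximating polynomial is written in $\overline{r}$ rather than $r$, so every manipulation of $U_0$ must be justified by Lemma \ref{L:properties}, and this is precisely what allows the supersolution/subsolution barriers to control $\tilde{v}$ and makes $\tilde{U}$ comparable to $\tilde{U}_0$ so the iteration closes.
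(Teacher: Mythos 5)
Your proposal is correct and follows essentially the same route as the paper, whose own proof of this lemma is the one-line remark that it ``follows as the proof of Lemma \ref{L:improflat}'': the rescaling, the split $\tilde u = \tilde u_0 + \tilde v$, the barrier $V=\tilde U_0 - \tilde U_0^{1+2\alpha}$ justified by Lemma \ref{L:properties}, the compactness reduction to Theorem \ref{T:straight}, and the correction of $Q$ to an approximating $\overline Q$ are all exactly the intended steps. Your observation that for $k=0$ the polynomial lives in $(x',\overline r)$ and that every estimate must route through the regularized quantities of Lemma \ref{L:properties} is precisely the point the paper relies on; the only cosmetic simplification you could make is to note that in the $k=0$ case the ``linear system'' reduces to the single relation $a_{n-1}+2a_n=a$, so the correction step is scalar.
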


\begin{proof} Follows as the proof of Lemma \ref{L:improflat}.

\end{proof}

We now focus on the proofs of Lemmas \ref{L:properties} and \ref{L:grad}, which will conclude the proof of Theorem \ref{T:main} for $k \geq 0$. 

\begin{proof}[Proof of Lemma \ref{L:properties}] Recall that by assumption $||\Gamma||_{C^{1+\alpha}}\le 1$.
Notice that $d, r$ and $U_0$ are locally Lipschitz continuous, therefore are differentiable a.e. Whenever we write their derivatives, we assume we are at a point where they are differentiable.

\emph{Step 1:} We start by smoothing out the signed distance function $d$.  Define, for small $\lambda$, the following neighborhood of $\Gamma$:
\[
D_\lambda := \{X \in \mathbb{R}^{n+1} : |d(X)| < \lambda \}.
\]
Let $\rho\in C^{\infty}_0(\Psi_{1/8})$ be symmetric in $x_{n-1}$, such that $\int_{\R^{n+1}}\rho dX=1$, and define
\[ 
\rho_\lambda(X) := \lambda^{-n-1}\rho(X/\lambda), \ \ \ \ d_\lambda := d  \ast \rho_\lambda.
\]
 Since $\| \Gamma \|_{H^{1+\alpha}} \leq 1$, for a point $x_0$ on the $x_{n-1}$ axis we have 
\begin{equation}\label{dxn-1}
|d-x_{n-1}| \leq C \lambda^{1+\alpha} \ \ \text{ in } \  \ \Psi_{4\lambda},
\end{equation}
 hence $d=x_{n-1} + \lambda^{1+\alpha}v$, with $|v| \leq C$. Since $x_{n-1} \ast \rho_\lambda = x_{n-1}$, we have 
\[
d_\lambda = x_{n-1} + \lambda^{1+\alpha}(v \ast \rho_\lambda),
\]
from which we conclude that
\begin{equation}\label{dlambda0}
\nabla d_\lambda = e_{n-1} + \lambda^{1+\alpha}(v \ast \nabla \rho_\lambda), \ \ (d_{\lambda})_t = \lambda^{1+\alpha}(v\ast (\rho_{\lambda})_t), \ \ D^2 d_\lambda = \lambda^{1+\alpha} (v \ast D^2\rho_\lambda).
\end{equation}
Moreover, since 
\[
\int \lambda | \nabla \rho_\lambda | \, dX \leq C, \ \ \ \int \lambda^2 | D^2 \rho_\lambda | \, dX \leq C, \ \ \ | \nabla d(x_0) - e_{n-1} | \leq C \lambda^\alpha,
\]
we find that 
\begin{equation}\label{dlambda}
|d_\lambda - d| \leq C\lambda^{1+\alpha}, \  \  | \nabla d_\lambda - \nabla d | \leq C \lambda^\alpha, \ \ |(d_{\lambda})_t|\le C\lambda^{\alpha}, \ \ | D^2 d_\lambda | \leq C \lambda^{\alpha-1} \ \text{ in } D_{4\lambda}.
\end{equation}

We now interpolate between the $d_{\lambda}$'s with $\lambda=\lambda_l=4^{-l}$ in the annular sets $\mathcal{A}_\lambda := \{X\in \R^{n+1} \ : \  \lambda < d(X) < 4\lambda \}$. More precisely, define 
\[
\overline{d} := \psi d_\lambda + (1-\psi) d_{4\lambda},
\]
where $\psi\in C^{\infty}_0(\Psi_1)$ is such that 
\begin{equation}\label{psi}
\begin{aligned}
&\psi  = 0 \text{  for  } d>3\lambda, && \psi = 1 \text{  for  } d<2\lambda, \\
&| \nabla \psi | \leq C \lambda^{-1}, && | D^2 \psi | \leq C \lambda^{-2} \\
&|\D_{x_n}\psi|\le C\frac{|x_n|}{\lambda^2}.
\end{aligned}
\end{equation}
To obtain such a function one might take, for instance, $\psi=h\left(\frac{d_{\lambda}}{\lambda}\right)$, where 
\[
h(t)= 
\begin{cases}
1,\  t\le 9/4\\
0, \ t\ge 11/4,
\end{cases}
\]
and $h$ is smooth in between.

Notice that $\overline{d} = d_\lambda$ in $\mathcal{A}_\lambda \cap D_{2\lambda}$ and $\overline{d} = d_{4\lambda}$ in $\mathcal{A}_\lambda \setminus D_{3\lambda}$.

A direct computation using \eqref{dlambda} and \eqref{psi} leads to
\[
|\overline{d} - d | \leq C \lambda^{1+\alpha}, \ \ | \nabla \overline{d} - \nabla d | \leq C\lambda^\alpha, \ \ |D^2 \overline{d} | \leq C \lambda^{\alpha-1} \ \ \text{ in }\mathcal{A}_\lambda. 
\]
\emph{Step 2:} We smooth out $r$ in an analogous way. Define $r_\lambda := \sqrt{d^2_\lambda + x^2_{n}}$ in $\mathcal{R}_\lambda := \{X\in \R^{n+1} \ : \  \lambda/2 < r(X) < 4\lambda \}$. Note that $r$, $r_\lambda$, and $\lambda$ are all comparable in $\mathcal{R}_\lambda$ and $\mathcal{R}_{\lambda}\subseteq D_{4\lambda}$.
We have, using \eqref{dlambda},
\[
|r^2_\lambda - r^2 | = | d^2_\lambda - d^2| \leq C \lambda^{2+\alpha}.
\]
From the above equation it follows that 
\begin{equation}\label{rlambda}
|r_\lambda -r|\le C\lambda^{1+\alpha} \ \ \text{ and} \ \ \ \Big|\frac{r_\lambda}{r}-1\Big| \leq C \lambda^\alpha \ \text{ in } \mathcal{R}_{\lambda}.
\end{equation}
Since 
\begin{equation}\label{drlambda}
\nabla_x r_\lambda = \frac{1}{r_\lambda}\left(d_\lambda \nabla_{x'} d_\lambda, x_n\right),
\end{equation}
we have, using \eqref{dlambda} and \eqref{rlambda}, 
\begin{equation}\label{ds}
| \nabla r_\lambda - \nabla r| \leq C \lambda^\alpha, \ \ | D^2r_\lambda | \leq \frac{C}{\lambda}.
\end{equation}
Furthermore, \eqref{drlambda} and \eqref{dlambda0} give
\[
| \nabla r_\lambda | -1 = O(\lambda^\alpha), \ \ | \nabla d_\lambda | -1 = O(\lambda^\alpha),
\]
 which together with \eqref{dlambda} and the identity 
 \[
 r_\lambda \Delta r_\lambda + | \nabla r_\lambda |^2 = \frac{1}{2}\Delta r^2_\lambda = d_\lambda \Delta d_\lambda + | \nabla d_\lambda |^2 +1
 \]
 gives
 \begin{equation}\label{deltar}
  r_\lambda \Delta r_\lambda = 1 + O(\lambda^\alpha).
  \end{equation}
Now, \eqref{rlambda} and \eqref{deltar} give us
\[
\Big|\Delta r_\lambda - \frac{1}{r} \Big| \leq C\lambda^{\alpha-1}.
\]
Finally,
\[
(r_{\lambda})_t=\frac{d_{\lambda}}{r_{\lambda}}(d_{\lambda})_t=O(\lambda^{\alpha}),
\]
Analogously to the procedure with $d$, we iteratively glue together the $r_\lambda$'s in the annular regions $\{X\in \R^{n+1} \ : \ \lambda_l < r(X) < 4\lambda_l\}$, where $\lambda_l=4^{-l}$, by defining
\[
\overline{r} := \psi r_\lambda + (1-\psi)r_{4\lambda},
\]
where $\psi$ satisfies the properties in \eqref{psi}. Thus as above we find 
\[
|\overline{r} - r | \leq C r^{1+\alpha}, \ \ | \nabla \overline{r} - \nabla r | \leq Cr^\alpha, \ \ \Big|\Delta \overline{r} - \frac{1}{r} \Big| \leq C \lambda^{\alpha-1}, \ \ \left|H\overline{r}-\frac{1}{r}\right|\le C\lambda^{\alpha-1}.
\]

Additionally, since $|\D_{x_n}\psi|\le C\frac{|x_n|}{\lambda^2}$, then
\begin{align*}
| \partial_{x_n} \overline{r} - \partial_{x_n} r |& = \left| (r_{\lambda}-r_{4\lambda})\D_{x_n}\psi+\psi(\D_{x_n}r_{\lambda}-\D_{x_n} r)+(1-\psi)(\D_{x_n}r_{4\lambda}-\D_{x_n}r)\right|\\
& \leq C \frac{|x_n|}{r}\lambda^\alpha \leq C \frac{U_0}{r^{1/2}}\lambda^\alpha.
\end{align*}

\emph{Step 3:} We construct $\overline{U_0}$. Define 
\[
 (U_0)_\lambda := \frac{\sqrt{2}}{2}(d_\lambda + r_\lambda)^{\frac{1}{2}} \ \text{ in } \mathcal{R}_\lambda.
 \]
 We claim that $(U_0)_\lambda$ satisfies the following: 
 \begin{equation}\label{U0lambda}
 \Big|\frac{(U_0)_\lambda}{U_0} - 1 \Big| \leq C\lambda^\alpha, \ \ |\nabla(U_0)_\lambda - \nabla U_0 | \leq C \lambda^{\alpha-\frac{1}{2}}, \ \ | \Delta(U_0)_\lambda | \leq C \lambda^{\alpha-\frac{3}{2}}, \ |((U_0)_{\lambda})_t|\le C\lambda^{\alpha-\frac{3}{2}}.
 \end{equation}
Then, proceeding with the interpolation exactly as in the construction of $\overline{r}$, we obtain $\overline{U}_0$ with 
\[
|\overline{U}_0 - U_0| \leq C \delta U_0 r^{\alpha}
\]
and 
\[
| H \overline{U}_0| \leq C \delta r^{\alpha - \frac{3}{2}}.
\]

We will prove the claim in the regions $\mathcal{R}^1_\lambda := \mathcal{R}_\lambda \cap \{d> -r/2\}$ and  $\mathcal{R}^2_\lambda := \mathcal{R}_\lambda \cap \{d<-r/2\}$.

In $\mathcal{R}^1_\lambda$ we have that $U_0$, $(U_0)_\lambda$, and $\lambda^{1/2}$ are comparable and
\[
(U_0)_\lambda = U_0 \left(\frac{r_\lambda + d_\lambda}{r+d}\right)^{\frac{1}{2}}.
\]
From \eqref{dlambda}, \eqref{rlambda} and \eqref{ds} we obtain 
\[
 \frac{r_\lambda +d_\lambda}{r+d} = 1 + O(\lambda^\alpha), \ \ \nabla\left( \frac{r_\lambda +d_\lambda}{r+d}\right) = O(\lambda^{\alpha-1}),
\]
therefore
 \[
\left|\frac{(U_0)_\lambda}{U_0} - 1 \right| \leq C\lambda^\alpha, \ \ |\nabla(U_0)_\lambda - \nabla U_0 | \leq C \lambda^{\alpha-\frac{1}{2}}.
 \]
Moreover,
\[
| \nabla(U_0)_\lambda | = | \nabla U_0 | + O(\lambda^{\alpha-\frac{1}{2}}) = \frac{1}{2}r^{-\frac{1}{2}} + O(\lambda^{\alpha-\frac{1}{2}}),
\]
which, combined with the fact that 
\[
(U_0)_\lambda \Delta (U_0)_\lambda + |\nabla(U_0)_\lambda|^2 = \frac{1}{4}\Delta(d_\lambda + r_\lambda)
\]
gives
\[
| \Delta(U_0)_\lambda | \leq C \lambda^{\alpha-\frac{3}{2}}.
\]
Finally, since $d+r>\frac{r}{2}$,
\[
d_{\lambda}+r_{\lambda}=(d_{\lambda}-d)+(r_{\lambda}-r)+d+r\ge -C\lambda^{\alpha+1}+\frac{\lambda}{4}\ge \frac{\lambda}{8},
\]
for $\lambda$ small enough, hence
\[
((U_0)_{\lambda})_t=\frac{1}{2}\frac{\frac{d_{\lambda}}{r_{\lambda}}+1}{\sqrt{d_{\lambda}+r_{\lambda}}}(d_{\lambda})_t=O(\lambda^{\alpha-\frac{1}{2}}).
\]

In $\mathcal{R}^2_\lambda$ we have that $U_0$, $(U_0)_\lambda$, and $|x_n| \lambda^{-1/2}$ are comparable and 
\begin{equation}\label{U0lambda}
(U_0)_\lambda = \frac{|x_n|}{\sqrt{2}}(r_{\lambda}-d_{\lambda})^{-\frac{1}{2}}=U_0 \left(\frac{r_\lambda -d_\lambda}{r-d}\right)^{-\frac{1}{2}}.
\end{equation}
Thus one proves as above that
\[
\Big|\frac{(U_0)_\lambda}{U_0} - 1 \Big| \leq C\lambda^\alpha, \ \ |\nabla(U_0)_\lambda - \nabla U_0 | \leq C \lambda^{\alpha-\frac{1}{2}}
\]
Finally, since $\partial_{x_n} d_\lambda = 0$ and $\partial_{x_n} r_\lambda = x_n/r_\lambda$, \eqref{U0lambda} leads to (assuming $x_n>0$)
\begin{align*}
\sqrt{2}\Delta(U_0)_\lambda & = 2\partial_{x_n}(r_\lambda-d_\lambda)^{-\frac{1}{2}} + x_n\Delta(r_\lambda-d_\lambda)^{-\frac{1}{2}}\\
&= \frac{x_n}{2}(r_\lambda - d_\lambda)^{-\frac{3}{2}}\left(-2\frac{1}{r_\lambda} - \Delta(r_\lambda-d_\lambda) + \frac{3}{2}\frac{| \nabla (r_\lambda-d_\lambda)|^2}{r_\lambda-d_\lambda}\right).
\end{align*}
Consequently, since 
\[
| \nabla(r_\lambda-d_\lambda)|^2 = r_\lambda^{-2}\left(2r_\lambda(r_\lambda-d_\lambda\right) + O(\lambda^{2+\alpha})),
\]
 we obtain 
\[
| \Delta(U_0)_\lambda | \leq C \lambda^{\alpha-\frac{3}{2}}.
\]
Finally, in $R_{\lambda}^2$ we have, for $\lambda$ small,
\[
r_{\lambda}-d_{\lambda}=(r_{\lambda}-r)+(d-d_{\lambda})+r-d\ge -C\lambda^{\alpha+1}+\frac{3r}{2}\ge -C\lambda^{\alpha+1}+\frac{3\lambda}{4}\ge \frac{\lambda}{4},
\]
therefore
\[
((U_0)_{\lambda})_t=-\frac{|x_n|}{2\sqrt{2}}(r_{\lambda}-d_{\lambda})^{-\frac{3}{2}}\left(\frac{d_{\lambda}}{r_{\lambda}}-1\right)(d_{\lambda})_t=O(\lambda^{\alpha-\frac{3}{2}}).
\]
Collecting the results above, the proof of Lemma \ref{L:properties} is complete. 
\end{proof}

We turn to Lemma \ref{L:grad}.

\begin{proof}[Proof of Lemma \ref{L:grad}]
Without loss of generality, let  $X_0= (x_0, 0)$ be a point at distance $\lambda$ from $\Gamma$, and, furthermore, assume that the closest point to $X_0$ on $\Gamma$ at $t=0$ is the origin. Therefore from our assumption that the space normal at origin is $e_{n-1}$, we get that  $x_0$ belongs to the hyperplane $\{ x''=0\}$.  Let $$U_0^* = \frac{\sqrt{2}}{2}\sqrt{x_{n-1} + r^*}, \, r^* = \sqrt{x_{n-1}^2+x_n^2}.$$ Not only do $U_0^*$ and $r^*$ coincide with $U_0$ and $r$ at $X_0$, but moreover if $d$, $r$, and $U_0$ are differentiable at $X_0$, we have $\nabla d = e_{n-1}$, $\nabla U_0 = \nabla U_0^*$, and $\nabla r = \nabla r^*$ at $X_0$.

Using that $\|\Gamma\|_{C^{1+\alpha}}\leq \delta$, we find $$|U_0^* - U_0| \leq C \lambda^{1/2+\alpha}$$ in the cone $\mathcal{C} = \{ \text{max}(|x''|, |t|^{1/2}) < r^* \} \cap \{ \lambda/2 < r^* < 2\lambda \}$. So in $\mathcal{C}$, 
\begin{equation}\label{e300}
|u-aU_0^*| \leq C \lambda^{1/2+\alpha}
\end{equation}

 This is because $u-aU_0^*$ is caloric and vanishes on $\mathcal{Q}$ where $\mathcal{Q}$ is a smooth   slit which contains origin and is at a distance comparable to $\lambda$ from $X_0$. This follows from the fact that the slit $\mathcal{P}$  is $C^{1, \alpha}$ and sufficiently flat near the  origin and is comparable to $\{x_{n-1} \leq 0\}$. Therefore one can find such a $\mathcal{Q}$ for which the corresponding $U_0^{'}$ will differ from $U_0$ and $U_0^*$ by order of $\lambda^{1/2+ \alpha}$ in $\mathcal{C}$. This implies \eqref{e300}.  Then from the   gradient estimates  in $\mathcal{C}$,  we can obtain that $$ | \nabla u - a \nabla U_0^* | \leq C \lambda^{-1/2+\alpha}, \: | \nabla_{x'} u - a \nabla_{x'} U_0^* | \leq C U_0^* \lambda^{-1+\alpha}$$ at $X_0$.   that Thus replacing $U_0^*$ by $U_0$ in these inequalities we find that for arbitrary $X \in \Psi_{1/2}$ where differentiability holds, if $\pi(X)$ is the projection of $X$ on $\Gamma$ at a fixed time level and $a_{\pi(X)}$ is the corresponding constant then $$ | \nabla u - a_{\pi(X)} \nabla U_0^* | \leq C r^{-1/2+\alpha}, \: | \nabla_{x'} u - a_{\pi(X)} \nabla_{x'} U_0 | \leq C U_0 r^{-1+\alpha}.$$ Using that $| \nabla U_0| \leq Cr^{-1/2}$, $|\nabla_{x'} U_0 | \leq CU_0/r$, $r \leq |X|$, and $|a_{\pi(X)}-a| \leq C|\pi(X)|^\alpha \leq C|X|^\alpha$, the lemma is proved.
\end{proof}

\section{Higher regularity of the free boundary in the parabolic Signorini problem}\label{S:HR}
Let $\Omega\subseteq \R^n$ be a domain with a sufficiently regular boundary $\D \Omega$, and $\mathcal{M}$ be a relatively open subset of $\D \Omega$. Define $\mathcal{S}:=\D \Omega\setminus\mathcal{M}$. We consider the \emph{parabolic Signorini problem for the heat equation}, which consists of solving

\begin{align}
\Delta v -\D_tv & = 0 & \text{ in }  &\Omega_T:= \Omega\times [0,T], \label{S1}\\
v\ge\varphi, \ \ \D_{\nu}v \ge 0, \  \ (v-\varphi)\D_{\nu}v&  =  0 & \text{ on }&\mathcal{M}_T:=\mathcal{M}\times(0,T], \label{S2}\\
v &=  g & \text{ on } & \mathcal{S}_T:=\mathcal{S}\times (0,T], \label{S3}\\
v(\cdot,0) &= \varphi_0 & \text{ on } & \Omega_0:=\Omega\times\{0\} \label{S4},
\end{align}
where $\D_{\nu}$ is the outer normal derivative on $\D\Omega$ and $\varphi:\mathcal{M}_T\rightarrow \R$, $\varphi_0:\Omega_0\rightarrow \R$ and $g:\mathcal{S}_T\rightarrow \R$ are prescribed functions satisfying the compatibility conditions
\[
\varphi_0 \ge \varphi \text{ on } \mathcal{M}\times\{0\}, \ \ \ g\ge\varphi \text{ on } \D\mathcal{S}\times (0,T], \ \ \ g=\varphi_0 \text{ on } \mathcal{S}\times\{0\}.
\]
The function $\varphi$ is called the \emph{thin obstable}, since $v$ must stay above $\varphi$ on $\mathcal{M}_T$. 

We say that a function $v\in W^{1,0}_2(\Omega_T)$ is a solution of \eqref{S1}-\eqref{S4} if 
\[
v\in \mathcal{K}:=\{w\in W^{1,0}_2(\Omega_T) \ | \ w\ge \varphi \text{ on } \mathcal{M}_T, w=g \text{ on } \mathcal{S}_T\},
\]
$\D_tv\in L_2(\Omega_T), v(\cdot, 0)=\varphi_0$ and
\[
\int_{\Omega_T}\left( \langle \nabla v,\nabla (w-v)\rangle +\D_tv(w-v)\right)dx \ge 0, \ \ \forall w\in \mathcal{K}.
\]

The \emph{free boundary} is defined as
\[
\Gamma(v):=\D_{\mathcal{M}_T}\{(x,t) \in\mathcal{M}_T \ | \ v(x,t)>\varphi(x,t)\},
\]
where $\D_{\mathcal{M}_T}$ denotes the boundary in the relative topology of $\mathcal{M}_T$.

Regarding the existing literature, the reader can find the existence and uniqueness of $v$ in \cite{B}, \cite{DL}, \cite{AU} and \cite{AU2}. The H{\"o}lder continuity of the spatial derivatives $\D_{x_i}v$, for $i=1,\ldots, n$, on compact subsets of $\Omega_T \cup \mathcal{M}_T$ was proved by Athanasopoulos (see \cite{A}) and subsequently by Uraltseva in \cite{U}, and with more relaxed assumptions on the boundary data by Arkhipova and Uraltseva in \cite{AU}. 
An extensive treatment of this problem and the optimal regularity of the solution, $v\in H_{\text{loc}}^{3/2,3/4}(\Omega_T\cup \mathcal{M}_T)$, was recently proved by Danielli, Garofalo, Petrosyan and To (see Theorem 9.1 in \cite{DGPT} for a flat thin manifold $\mathcal{M}$ contained in $\R^{n-1}\times\{0\}$, assuming $\varphi\in H^{2,1}(\Omega_T)$. There the authors establish an ingenious truncated version of Poon's parabolic counterpart to Almgren's frequency formula (see \cite{P}). With a frequency formula in hands, the authors systematically classified the free boundary points by considering the limit of the generalized frequency function at the free boundary point in question.

To state the main result of this paper, we need to describe this classification. We consider
\[
\Gamma_*(v):=\{(x',t) \ | \ v(x',0,t)=\varphi(x',t), \D_{x_n}v(x',0,t)=0\}.
\]

and assume $(0,0)\in \Gamma_*(v)$,  $\varphi\in H^{l,l}(B_1\cap \R^{n-1})$, with $l=k+\gamma\ge 2$ and $0<\gamma \le 1$. Let $k\le l_0< l$, $\sigma\le l-l_0$. The classification of free boundary points is achieved by means of the
\emph{truncated frequency function}
\begin{equation}\label{frequency}
\Phi_{u_k}^{(l_0)}(r):=\frac{1}{2}re^{Cr^{\sigma}}\frac{d}{dr}\log\max\{H_{u_k}(r),r^{2l_0}\}+2(e^{Cr^{\sigma}}-1).
\end{equation}
Here
\[
H_u(r):=\frac{1}{r_2}\int_{\R^n_+\times (- r^2,0]}u(x,t)^2G(x,t)dxdt,  \text{ where } G(x, t) \text{ is the backward heat kernel on } \R^n\times \R \text{ and }
\]
\[
u_k(x,t)=\left[v(x,t)-\tilde{q}_k(x,t)-(\varphi(x',t)-q_k(x',t))\right]\psi(x),
\]
where $q_k$ is the parabolic Taylor polynomial of order $k$ of $\varphi$ at the origin, $\tilde{q}_k$ is a caloric extension polynomial of $q_k$ in $\R^n\times \R$ which is symmetric in $x_n$ and $\psi$ is a cutoff function, even in $x_n$, such that $0\le \psi\le 1, \psi=1$ on $B_{1/2}$ and $\text{supp}\ \psi\subset B_{3/4}$.
(see Section 4 of \cite{DGPT}). The frequency function \eqref{frequency} was introduced in
\cite{DGPT} as a truncated version of Poon's frequency function adjusted to the solutions of
\eqref{S1}--\eqref{S4}. By  Theorem 6.3 in \cite{DGPT}, (see also Chapter 10),
$\Phi_{u_k}^{(l_0)}(r)$ is monotone increasing and hence 
the limit 
\begin{equation}\label{truncatedhomo}
\kappa_v^{(l_0)}(0,0):=\Phi_{u_k}^{(l_0)}(0+)
\end{equation}
exists. For $(x_0,t_0)\in \Gamma_*(v)$, we let $v^{(x_0,t_0)}(x,t):=v(x_0+x,t_0+t),$ and we analogously define
\[
\kappa_v^{(l_0)}(x_0,t_0):=\kappa_{v^{(x_0,t_0)}}^{(l_0)}(0,0).
\]
$l_0$ can be pushed up to $l$ in \eqref{truncatedhomo} by setting
\[
\kappa_v^{(l)}(x_0,t_0):=\sup_{l_0<l}\kappa_v^{(l)}(x_0,t_0).
\]
The remarkable fact is that either $\kappa_v^{(l)}(x_0,t_0)=3/2$, or
$2\le \kappa_v^{(l)}(x_0,t_0)\le l $, as proved in Proposition 10.8 in \cite{DGPT}. This leads to the following definition.

\begin{definition} We say that $(x_0,t_0)\in\Gamma(v)$ is a \emph{regular} point iff
  $\kappa_v^{(l)}(x_0,t_0)=3/2$. We define
$$
\mathcal{R}(v)=\{x_0\in \Gamma_*(v)\mid \kappa_v^{(l)}(x_0,t_0)=3/2\},
$$
the set of all regular free boundary points, also known as the
\emph{regular set}.
\end{definition}

Concerning the regularity of the free boundary, it was proved in Theorem 11.3 of \cite{DGPT} that if $(0,0)$ is a regular free boundary point and $\varphi\in H^{l,l}(B_1'\times (-1,0])$, for some $l\ge 3$, where $B_r':=B_r \cap \R^{n-1}$, then $\Gamma(v)$ is given locally by the graph of a parabolically Lipschitz function function $g$ in some direction, say $e_n$. 

Moreover by an application of boundary Harnack inequality as in \cite{PS}, they showed that  there exists $\delta=\delta(v)>0$  and $\alpha > 0$ such that $\nabla_{x''} g\in H^{\alpha,\alpha/2}(B''_{\delta}\times (-\delta^2,0])$, where $B_r'':=B_r\cap \R^{n-2}$, such that, possibly after a rotation in $\R^{n-1}$,
\[
\Gamma(v)\cap \left(B_{\delta}'\times (-\delta^2,0]\right)= \mathcal{R}(v)\cap \left(B_{\delta}'\times (-\delta^2,0]\right)=\{(x',t)\in B_\delta'\times (-\delta^2,0] \ | \ x_{n-1}=g(x'',t)\}.
\]
Now very recently in \cite{PZ}, it has been obtained that $v_t$ is H{\"o}lder continuous at regular  free boundary  points.  Consequently  by applying boundary Harnack to $\frac{v_t}{v_{x_{n-1}}}$, one obtains that $g_t$ is H{\"o}lder continuous (see for instance Corollary 3.3 in \cite{PZ}, see also Theorem 4.10 in \cite{ACM}). This implies $\mathcal{R}(v)$ is a  $C^{1, \alpha}$ hypersurface in $x'$ and $t$, possibly for a different $\alpha$.

Our central result states that, in fact, $\mathcal{R}(v)$ is locally $C^{\infty}$ when $\phi \equiv 0$. Note that here, ``locally" means with respect to a backward in time parabolic cylinder of the form $B_{\delta}\times (-\delta^2,0]$, rather than in a full neighborhood of the free boundary point. Indeed, the free boundary may fail to even exist at future times.

\begin{theorem}
$\mathcal{R}(v)$ is locally $C^{\infty}$.
\end{theorem}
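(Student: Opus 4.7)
The plan is to bootstrap the known $C^{1+\alpha}$ regularity of $\mathcal{R}(v)$ (obtained in \cite{PS} combined with the time-regularity of $v_t$ from \cite{PZ}, \cite{ACM}) to $C^\infty$ by an iterated application of Theorem \ref{T:main}. First, I would fix a regular free boundary point, which after translation may be taken to be the origin, and, possibly after a rotation in $x'$, write the free boundary locally as a graph $x_{n-1} = g(x'',t)$ with $g \in C^{1+\alpha}$. The coincidence set then becomes the slit $\mathcal{P} = \{x_n = 0, \, x_{n-1}\le g(x'',t)\}$, and since the obstacle is zero, $v$ vanishes continuously on $\mathcal{P}$. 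Reflecting $v$ evenly across $\{x_n=0\}$ gives a function in $C(\Psi_1)$, caloric in $\Psi_1\setminus \mathcal{P}$, even in $x_n$, and vanishing on $\mathcal{P}$ — exactly the setting of Theorem \ref{T:main}.

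Next, I would consider the caloric functions $v_{x''_i}$ ($i=1,\ldots,n-2$), $v_t$, and $v_{x_{n-1}}$. The first two vanish continuously on $\mathcal{P}$: the tangential spatial derivatives because $v \equiv 0$ on the coincidence set, and $v_t$ because of the continuity of $v_t$ at regular points established in \cite{PZ}, \cite{ACM}. The denominator $U:= v_{x_{n-1}}$ is caloric, even in $x_n$, and positive in $\Psi_\lambda \setminus \mathcal{P}$ near a regular point by nondegeneracy at regular free boundary points (as in Lemma \ref{L:P0}). Implicit differentiation of the identity $v(x'',g(x'',t),0,t) \equiv 0$ yields
\[
g_{x''_i} = -\frac{v_{x''_i}}{v_{x_{n-1}}}, \qquad g_t = -\frac{v_t}{v_{x_{n-1}}} \qquad \text{on } \Gamma.
\]

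The bootstrap then proceeds as follows. With $g \in C^{1+\alpha}$, apply Theorem \ref{T:main} in the case $k=0$ (with $f \equiv 0$, since the numerators are caloric) to obtain
\[
\frac{v_{x''_i}}{v_{x_{n-1}}}, \ \frac{v_t}{v_{x_{n-1}}} \in H^{1+\alpha}_{x'tr}(\Gamma \cap \Psi_{1/2}).
\]
Restricting to $\Gamma$ (where $r=0$) and using the above identities gives $g_{x''_i}, g_t \in H^{1+\alpha}$, hence $g \in H^{2+\alpha}$. Now apply Theorem \ref{T:main} with $k=1$ using this improved regularity of $\Gamma$: the same quotients lie in $H^{2+\alpha}_{x'tr}(\Gamma)$, so $g \in H^{3+\alpha}$. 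Iterating through all $k\ge 1$ produces $g \in H^{k+\alpha}$ for every $k$, so $g \in C^\infty$.

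The main technical point is Theorem \ref{T:main} itself — the higher-order boundary Harnack estimate — which is the content of the rest of the paper. Within the present reduction, the delicacies are: (i) the base step $k=0$ requires $C^{1+\alpha}$ rather than $H^{1+\alpha}$ regularity of $\Gamma$, but this matches precisely what \cite{PS}, \cite{PZ}, \cite{ACM} provide, as noted in the remark following Theorem \ref{T:main}; and (ii) verifying that $v_t$ is an admissible numerator, which crucially relies on the continuity of $v_t$ at regular points from \cite{PZ}, \cite{ACM} — without this, one could only bootstrap the spatial regularity of $g$, not the joint space-time regularity needed to conclude $g \in C^\infty$.
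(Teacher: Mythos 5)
Your proposal is correct and follows essentially the same approach as the paper: implicitly differentiate $v(x'',g(x'',t),0,t)=0$ to express $g_{x''_i}$ and $g_t$ as quotients of caloric functions $v_{x''_i}/v_{x_{n-1}}$ and $v_t/v_{x_{n-1}}$, then bootstrap by iterating Theorem \ref{T:main} from the known $C^{1+\alpha}$ regularity. You also correctly identify the key subtlety that $v_t$ must vanish continuously on the free boundary (provided by \cite{PZ}, \cite{ACM}) for it to serve as an admissible numerator, which the paper notes as crucial.
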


\begin{proof}
We have that
\begin{equation}\label{fb8}
v(x'',g(x'',t),0, t)=0.
\end{equation}
Therefore,  by differentiating equation \eqref{fb8} with respect to the variables $x_1,...x_{n-2}, t$, we  obtain that
\begin{equation}\label{fb}
\frac{D_i v}{D_{n-1} v}= D_i g,\  \frac{D_t v}{D_{n-1} v}= D_t g.
\end{equation}
 This implies that if we take  $u= D_i v$ and $U=D_{n-1} v$ in Theorem \ref{T:main}, we obtain from \eqref{fb} that $D''g\in H^{1+\alpha}$. Similarly, with $u=D_t v$ and $u=D_{n-1} v$, Theorem \ref{T:main} leads to the conclusion that $D_t g \in H^{1+\alpha}$ (note that this relies crucially on the fact that $D_t v$ vanishes on the free boundary). This implies that $g \in H^{2+\alpha}$, i.e.,  the free boundary is $H^{2+\alpha}$ regular.  We now proceed  inductively as follows. Suppose we know that $g$, and hence the free boundary, is in $H^{k+\alpha}$ for some $k \geq 2$. Then, by applying Theorem \ref{T:main} to $u=D_i v$ and $U=D_{n-1} v$, we obtain from \eqref{fb} that $D''g \in H^{k+\alpha}$. Similarly, with $u=D_t v$ and $U=D_{n-1} v$, we find that $D_t g \in H^{k+\alpha}$, implying that $g \in H^{k+1+\alpha}$. Therefore, we can repeatedly apply Theorem \ref{T:main} to conclude that $\mathcal{R}(v)$ is smooth.
\end{proof}

\def\cprime{$'$}
\begin{bibdiv}
\begin{biblist}

\bib{AC}{article}{
      author={Athanosopoulous, I},
      author={Caffarelli, L},
       title={Optimal regularity of lower dimensional obstacle problems},
        date={2004},
     journal={Zap. Nauchn. Sem. S. Peterburg, Otdel. Mat. Inst. Steklov.},
      volume={310},
}

\bib{ACM}{article}{
      author={Athanosopoulous, I},
      author={Caffarelli, L},
      author={Milakis, E.},
       title={Parabolic obstacle problems. quasi-convexity and regularity},
        date={2016},
      eprint={arXiv:1601.01516},
      status={preprint},
}

\bib{ACS}{article}{
      author={Athanosopoulous, I},
      author={Caffarelli, L},
      author={Salsa, S},
       title={The structure of the free boundary for lower dimensional obstacle
  problems},
        date={2008},
     journal={Amer J Math},
      volume={130},
      number={2},
       pages={485\ndash 498},
}

\bib{A}{article}{
      author={Athanasopoulous, Ioannis},
       title={Regularity of the solution of an evolution problem with
  inequalities on the boundary},
        date={1982},
        ISSN={0360-5302},
     journal={Comm. Partial Differential Equations},
      volume={7},
      number={12},
       pages={1453\ndash 1465},
         url={http://dx.doi.org/10.1080/03605308208820258},
      review={\MR{679950 (84m:35052)}},
}

\bib{AU}{article}{
      author={Arkhipova, A.~A.},
      author={Ural{\cprime}tseva, N.~N.},
       title={Regularity of the solution of a problem with a two-sided limit on
  a boundary for elliptic and parabolic equations},
        date={1988},
        ISSN={0371-9685},
     journal={Trudy Mat. Inst. Steklov.},
      volume={179},
       pages={5\ndash 22, 241},
        note={Translated in Proc. Steklov Inst. Math. {{\bf{1}}989}, no. 2,
  1--19, Boundary value problems of mathematical physics, 13 (Russian)},
      review={\MR{964910 (90h:35044)}},
}

\bib{AU2}{article}{
      author={Arkhipova, A.},
      author={Uraltseva, N.},
       title={Sharp estimates for solutions of a parabolic {S}ignorini
  problem},
        date={1996},
        ISSN={0025-584X},
     journal={Math. Nachr.},
      volume={177},
       pages={11\ndash 29},
         url={http://dx.doi.org/10.1002/mana.19961770103},
      review={\MR{1374941 (97a:35084)}},
}

\bib{BG}{article}{
      author={Banerjee, A.},
      author={Garofalo, N.},
       title={A parabolic analogue of the higher-order comparison theorem of de
  silva and savin},
        date={2015},
      eprint={arXiv:1503.06340},
      status={preprint,, To appear in Journal of Differential Equations},
}

\bib{B}{article}{
      author={Br{\'e}zis, Ha{\"{\i}}m},
       title={Probl\`emes unilat\'eraux},
        date={1972},
        ISSN={0021-7824},
     journal={J. Math. Pures Appl. (9)},
      volume={51},
       pages={1\ndash 168},
      review={\MR{0428137 (55 \#1166)}},
}

\bib{CFMS}{article}{
      author={Caffarelli, L},
      author={Fabes, E},
      author={Mortola, S},
      author={Salsa, S},
       title={Boundary behavior of nonnegative solutions of elliptic operators
  in divergence form},
        date={1981},
     journal={Indiana Univ. Math. J.},
      volume={30},
      number={4},
       pages={621\ndash 640},
}

\bib{CSS}{article}{
      author={Caffarelli, L},
      author={Salsa, S},
      author={Silvestre, L},
       title={Regularity estimates for the solution and the free boundary to
  the obstacle problem for the fractional laplacian},
        date={2008},
     journal={Inventiones Mathematicae},
      volume={171},
      number={2},
       pages={425\ndash 461},
}

\bib{DGPT}{article}{
      author={Danielli, Donatella},
      author={Garofalo, Nicola},
      author={Petrosyan, Arshak},
      author={To, Tung},
       title={Optimal regularity and the free boundary in the parabolic
  signorini problem},
        date={2013},
      eprint={arXiv:1306.5213},
      status={preprint, to appear in Memoirs of the Amer. Math. Soc},
}

\bib{DL}{book}{
      author={Duvaut, Georges},
      author={Lions, Jacques-Louis},
       title={Neravenstva v mekhanike i fizike},
   publisher={``Nauka'', Moscow},
        date={1980},
        note={Translated from the French by S. Yu. Prishchepionok and T. N.
  Rozhkovskaya},
      review={\MR{602699 (83h:73009)}},
}

\bib{DSS14a}{article}{
      author={De~Silva, Daniela},
      author={Savin, Ovidiu},
       title={Boundary harnack estimates in slit domains and applications to
  thin free boundary problems},
        date={2014},
      eprint={arXiv:1406.6039},
      status={preprint},
}

\bib{DSS14b}{article}{
      author={De~Silva, Daniela},
      author={Savin, Ovidiu},
       title={$c^{\infty}$ regularity of certain thin free boundaries},
        date={2014},
      eprint={arXiv:1402.1098},
      status={preprint},
}

\bib{DSS11}{article}{
      author={De~Silva, Daniela},
      author={Savin, Ovidiu},
       title={$c^{2, \alpha}$ regularity of flat free boundaries for the thin
  one-phase problem},
        date={2011},
      eprint={arXiv:1111.2513},
      status={preprint},
}

\bib{DSS}{article}{
      author={De~Silva, Daniela},
      author={Savin, Ovidiu},
       title={A note on higher regularity boundary harnack inequality},
        date={2014},
      eprint={arXiv:1403.2588},
      status={preprint},
}

\bib{JK}{article}{
      author={Jerison, D},
      author={Kenig, C},
       title={Boundary behavior of harmonic functions in nontangentially
  accessible domains},
        date={1982},
     journal={Adv. in Math.},
      volume={46},
      number={1},
       pages={80\ndash 147},
}

\bib{KN}{article}{
      author={Kinderlehrer, D},
      author={Nirenberg, L},
       title={Regularity in free boundary problems},
        date={1977},
     journal={Ann. Scuola Norm. Sup. Pisa Cl. Sci.},
      volume={4},
      number={2},
       pages={373\ndash 391},
}

\bib{KNS}{article}{
      author={Kinderlehrer, D},
      author={Nirenberg, L},
      author={Spruck, J},
       title={Regularity in elliptic free boundary problems},
        date={1978},
     journal={J. Analyse Math.},
      volume={34},
       pages={86\ndash 119},
}

\bib{KPS}{article}{
      author={Koch, Herbert},
      author={Petrosyan, A},
      author={Shi, W},
       title={Higher regularity of the free boundary in the elliptic signorini
  problem},
        date={2014},
      eprint={arXiv:1406.5011},
      status={preprint},
}

\bib{Li}{book}{
      author={Lieberman, Gary~M.},
       title={Second order parabolic differential equations},
   publisher={World Scientific Publishing Co., Inc., River Edge, NJ},
        date={1996},
        ISBN={981-02-2883-X},
         url={http://dx.doi.org/10.1142/3302},
      review={\MR{1465184 (98k:35003)}},
}

\bib{P}{article}{
      author={Poon, Chi-Cheung},
       title={Unique continuation for parabolic equations},
        date={1996},
        ISSN={0360-5302},
     journal={Comm. Partial Differential Equations},
      volume={21},
      number={3-4},
       pages={521\ndash 539},
         url={http://dx.doi.org/10.1080/03605309608821195},
      review={\MR{1387458 (97f:35081)}},
}

\bib{PS}{article}{
      author={Petrosyan, Arshak},
      author={Shi, Wenhui},
       title={Parabolic boundary harnack principle in domains with thin
  lipschitz complement},
        date={2014},
        ISSN={0020-9910},
     journal={Anal. PDE},
      volume={7},
      number={6},
       pages={1421\ndash 1463},
      review={\MR{3270169}},
}

\bib{PZ}{article}{
      author={Petrosyan, A},
      author={Zeller, A.},
       title={Boundedness and continuity of the time derivative in the
  parabolic signorini problem},
        date={2015},
      eprint={arXiv:1512.09173},
      status={preprint},
}

\bib{U}{article}{
      author={Ural{\cprime}tseva, N.~N.},
       title={H\"older continuity of gradients of solutions of parabolic
  equations with boundary conditions of {S}ignorini type},
        date={1985},
        ISSN={0002-3264},
     journal={Dokl. Akad. Nauk SSSR},
      volume={280},
      number={3},
       pages={563\ndash 565},
      review={\MR{775926 (87b:35025)}},
}

\end{biblist}
\end{bibdiv}

\end{document}